\documentclass[12pt]{article}

\usepackage[T1]{fontenc}
\usepackage[margin=1in]{geometry}
\usepackage{microtype}
\usepackage{amsmath,amssymb,amsthm,mathtools}
\usepackage{newtxtext,newtxmath}
\usepackage{booktabs,longtable,tabularx}
\usepackage{algorithm}
\usepackage[noend]{algpseudocode}
\usepackage{graphicx}
\usepackage{tikz}
\usepackage{subcaption}
\usepackage{enumitem}
\usepackage{needspace}
\usepackage{listings}
\usepackage{xcolor}
\usepackage{xurl}
\usepackage{aliascnt}
\usepackage[colorlinks=true,linkcolor=blue!55!black,citecolor=blue!55!black,urlcolor=blue!55!black]{hyperref}
\makeatletter
\def\theHALG@line{\thealgorithm.\arabic{ALG@line}}
\makeatother

\definecolor{accent}{RGB}{36,78,120}
\definecolor{softgray}{RGB}{246,247,249}
\hypersetup{
  pdftitle={The Excluded Vertex-Minors and Pivot-Minors for Rank-Width at Most Two},
  pdfauthor={Sang-il Oum}
}
\setlist{itemsep=2pt,topsep=4pt}
\usepackage{keytheorems}
\usepackage{zref-clever}
\zcsetup{cap=true}
\newkeytheorem{theorem}[name=Theorem,refname={Theorem,Theorems}, Refname={Theorem,Theorems}]
\newkeytheorem{lemma,proposition,corollary,conjecture,problem}[sibling=theorem]
\newkeytheorem{definition}[sibling=theorem,style=definition]
\newkeytheorem{remark}[sibling=theorem,style=remark]
\newcommand{\Ftwo}{\mathbb{F}_2}
\newcommand{\rw}{\operatorname{rw}}
\newcommand{\bw}{\operatorname{bw}}
\newcommand{\rank}{\operatorname{rank}}
\newcommand{\cO}{\mathcal{O}}
\newcommand{\cF}{\mathcal{F}}
\newcommand{\cPiv}{\mathcal{P}^{\mathrm{piv}}}

\newcommand{\abs}[1]{\lvert #1\rvert}
\tikzset{
  obstruction edge/.style={draw=black,line width=0.4pt,line cap=round},
  obstruction vertex/.style={circle,draw,solid,fill=black,inner sep=0pt,minimum width=3pt}
}
\newcommand{\obstructiongraph}[1]{\csname obstructiongraph@#1\endcsname}

\expandafter\def\csname obstructiongraph@1\endcsname{\begin{tikzpicture}[x=1cm,y=1cm,baseline=(current bounding box.center)]
    \coordinate (g1v1) at (1.4002,-0.2363);
    \coordinate (g1v2) at (-0.0812,1.4177);
    \coordinate (g1v3) at (-1.3641,-0.3946);
    \coordinate (g1v4) at (-1.1590,0.8205);
    \coordinate (g1v5) at (1.0578,0.9474);
    \coordinate (g1v6) at (0.0000,0.0000);
    \coordinate (g1v7) at (0.6882,-1.2421);
    \coordinate (g1v8) at (-0.5420,-1.3125);
    \draw[obstruction edge] (g1v2) -- (g1v4);
    \draw[obstruction edge] (g1v3) -- (g1v4);
    \draw[obstruction edge] (g1v1) -- (g1v5);
    \draw[obstruction edge] (g1v2) -- (g1v5);
    \draw[obstruction edge] (g1v1) -- (g1v6);
    \draw[obstruction edge] (g1v2) -- (g1v6);
    \draw[obstruction edge] (g1v3) -- (g1v6);
    \draw[obstruction edge] (g1v4) -- (g1v6);
    \draw[obstruction edge] (g1v5) -- (g1v6);
    \draw[obstruction edge] (g1v1) -- (g1v7);
    \draw[obstruction edge] (g1v6) -- (g1v7);
    \draw[obstruction edge] (g1v3) -- (g1v8);
    \draw[obstruction edge] (g1v6) -- (g1v8);
    \draw[obstruction edge] (g1v7) -- (g1v8);
    \node[obstruction vertex] at (g1v1) {};
    \node[obstruction vertex] at (g1v2) {};
    \node[obstruction vertex] at (g1v3) {};
    \node[obstruction vertex] at (g1v4) {};
    \node[obstruction vertex] at (g1v5) {};
    \node[obstruction vertex] at (g1v6) {};
    \node[obstruction vertex] at (g1v7) {};
    \node[obstruction vertex] at (g1v8) {};
  \end{tikzpicture}}

\expandafter\def\csname obstructiongraph@2\endcsname{\begin{tikzpicture}[x=1cm,y=1cm,baseline=(current bounding box.center)]
    \coordinate (g2v1) at (0.0000,1.4200);
    \coordinate (g2v2) at (-1.2298,-0.7100);
    \coordinate (g2v3) at (1.2298,-0.7100);
    \coordinate (g2v4) at (0.3716,0.3346);
    \coordinate (g2v5) at (0.1039,-0.4891);
    \coordinate (g2v6) at (-0.4755,0.1545);
    \coordinate (g2v7) at (0.7134,0.6424);
    \coordinate (g2v8) at (0.1996,-0.9390);
    \coordinate (g2v9) at (-0.9130,0.2967);
    \draw[obstruction edge] (g2v1) -- (g2v2);
    \draw[obstruction edge] (g2v1) -- (g2v3);
    \draw[obstruction edge] (g2v2) -- (g2v3);
    \draw[obstruction edge] (g2v1) -- (g2v4);
    \draw[obstruction edge] (g2v3) -- (g2v4);
    \draw[obstruction edge] (g2v2) -- (g2v5);
    \draw[obstruction edge] (g2v3) -- (g2v5);
    \draw[obstruction edge] (g2v4) -- (g2v5);
    \draw[obstruction edge] (g2v1) -- (g2v6);
    \draw[obstruction edge] (g2v2) -- (g2v6);
    \draw[obstruction edge] (g2v4) -- (g2v6);
    \draw[obstruction edge] (g2v5) -- (g2v6);
    \draw[obstruction edge] (g2v1) -- (g2v7);
    \draw[obstruction edge] (g2v3) -- (g2v7);
    \draw[obstruction edge] (g2v4) -- (g2v7);
    \draw[obstruction edge] (g2v2) -- (g2v8);
    \draw[obstruction edge] (g2v3) -- (g2v8);
    \draw[obstruction edge] (g2v5) -- (g2v8);
    \draw[obstruction edge] (g2v7) -- (g2v8);
    \draw[obstruction edge] (g2v1) -- (g2v9);
    \draw[obstruction edge] (g2v2) -- (g2v9);
    \draw[obstruction edge] (g2v6) -- (g2v9);
    \draw[obstruction edge] (g2v7) -- (g2v9);
    \draw[obstruction edge] (g2v8) -- (g2v9);
    \node[obstruction vertex] at (g2v1) {};
    \node[obstruction vertex] at (g2v2) {};
    \node[obstruction vertex] at (g2v3) {};
    \node[obstruction vertex] at (g2v4) {};
    \node[obstruction vertex] at (g2v5) {};
    \node[obstruction vertex] at (g2v6) {};
    \node[obstruction vertex] at (g2v7) {};
    \node[obstruction vertex] at (g2v8) {};
    \node[obstruction vertex] at (g2v9) {};
  \end{tikzpicture}}

\expandafter\def\csname obstructiongraph@3\endcsname{\begin{tikzpicture}[x=1cm,y=1cm,baseline=(current bounding box.center)]
    \coordinate (g3v1) at (-0.9110,1.2050);
    \coordinate (g3v2) at (-0.4350,0.4860);
    \coordinate (g3v3) at (0.1470,0.1470);
    \coordinate (g3v4) at (0.1470,-1.1755);
    \coordinate (g3v5) at (-1.1755,0.1470);
    \coordinate (g3v6) at (-0.9110,-0.9110);
    \coordinate (g3v7) at (1.2050,1.2050);
    \coordinate (g3v8) at (1.2050,-0.9110);
    \coordinate (g3v9) at (0.7290,-0.1920);
    \draw[obstruction edge] (g3v1) -- (g3v2);
    \draw[obstruction edge] (g3v2) -- (g3v3);
    \draw[obstruction edge] (g3v1) -- (g3v5);
    \draw[obstruction edge] (g3v3) -- (g3v6);
    \draw[obstruction edge] (g3v4) -- (g3v6);
    \draw[obstruction edge] (g3v5) -- (g3v6);
    \draw[obstruction edge] (g3v1) -- (g3v7);
    \draw[obstruction edge] (g3v3) -- (g3v7);
    \draw[obstruction edge] (g3v4) -- (g3v8);
    \draw[obstruction edge] (g3v7) -- (g3v8);
    \draw[obstruction edge] (g3v3) -- (g3v9);
    \draw[obstruction edge] (g3v8) -- (g3v9);
    \node[obstruction vertex] at (g3v1) {};
    \node[obstruction vertex] at (g3v2) {};
    \node[obstruction vertex] at (g3v3) {};
    \node[obstruction vertex] at (g3v4) {};
    \node[obstruction vertex] at (g3v5) {};
    \node[obstruction vertex] at (g3v6) {};
    \node[obstruction vertex] at (g3v7) {};
    \node[obstruction vertex] at (g3v8) {};
    \node[obstruction vertex] at (g3v9) {};
  \end{tikzpicture}}

\expandafter\def\csname obstructiongraph@4\endcsname{\begin{tikzpicture}[x=1cm,y=1cm,baseline=(current bounding box.center)]
    \coordinate (g4v1) at (-1.0780,0.9243);
    \coordinate (g4v2) at (1.0780,0.9243);
    \coordinate (g4v3) at (-0.7607,-0.8460);
    \coordinate (g4v4) at (0.0000,-0.7631);
    \coordinate (g4v5) at (0.0000,-0.0743);
    \coordinate (g4v6) at (0.7607,-0.8460);
    \coordinate (g4v7) at (1.0823,-0.1658);
    \coordinate (g4v8) at (-1.0823,-0.1658);
    \coordinate (g4v9) at (0.0000,1.0123);
    \draw[obstruction edge] (g4v3) -- (g4v4);
    \draw[obstruction edge] (g4v3) -- (g4v5);
    \draw[obstruction edge] (g4v4) -- (g4v6);
    \draw[obstruction edge] (g4v5) -- (g4v6);
    \draw[obstruction edge] (g4v2) -- (g4v7);
    \draw[obstruction edge] (g4v4) -- (g4v7);
    \draw[obstruction edge] (g4v6) -- (g4v7);
    \draw[obstruction edge] (g4v1) -- (g4v8);
    \draw[obstruction edge] (g4v3) -- (g4v8);
    \draw[obstruction edge] (g4v4) -- (g4v8);
    \draw[obstruction edge] (g4v1) -- (g4v9);
    \draw[obstruction edge] (g4v2) -- (g4v9);
    \draw[obstruction edge] (g4v5) -- (g4v9);
    \node[obstruction vertex] at (g4v1) {};
    \node[obstruction vertex] at (g4v2) {};
    \node[obstruction vertex] at (g4v3) {};
    \node[obstruction vertex] at (g4v4) {};
    \node[obstruction vertex] at (g4v5) {};
    \node[obstruction vertex] at (g4v6) {};
    \node[obstruction vertex] at (g4v7) {};
    \node[obstruction vertex] at (g4v8) {};
    \node[obstruction vertex] at (g4v9) {};
  \end{tikzpicture}}

\expandafter\def\csname obstructiongraph@5\endcsname{\begin{tikzpicture}[x=1cm,y=1cm,baseline=(current bounding box.center)]
    \coordinate (g5v1) at (0.0000,1.4200);
    \coordinate (g5v2) at (-0.5588,0.6597);
    \coordinate (g5v3) at (1.3505,0.4388);
    \coordinate (g5v4) at (-0.8347,-1.1488);
    \coordinate (g5v5) at (-1.3505,0.4388);
    \coordinate (g5v6) at (0.8347,-1.1488);
    \coordinate (g5v7) at (0.2200,0.4800);
    \coordinate (g5v8) at (-0.3400,0.1000);
    \coordinate (g5v9) at (-0.0800,-0.4500);
    \draw[obstruction edge] (g5v1) -- (g5v2);
    \draw[obstruction edge] (g5v1) -- (g5v3);
    \draw[obstruction edge] (g5v1) -- (g5v5);
    \draw[obstruction edge] (g5v2) -- (g5v5);
    \draw[obstruction edge] (g5v4) -- (g5v5);
    \draw[obstruction edge] (g5v3) -- (g5v6);
    \draw[obstruction edge] (g5v4) -- (g5v6);
    \draw[obstruction edge] (g5v1) -- (g5v7);
    \draw[obstruction edge] (g5v2) -- (g5v8);
    \draw[obstruction edge] (g5v4) -- (g5v9);
    \draw[obstruction edge] (g5v6) -- (g5v9);
    \draw[obstruction edge] (g5v7) -- (g5v9);
    \draw[obstruction edge] (g5v8) -- (g5v9);
    \node[obstruction vertex] at (g5v1) {};
    \node[obstruction vertex] at (g5v2) {};
    \node[obstruction vertex] at (g5v3) {};
    \node[obstruction vertex] at (g5v4) {};
    \node[obstruction vertex] at (g5v5) {};
    \node[obstruction vertex] at (g5v6) {};
    \node[obstruction vertex] at (g5v7) {};
    \node[obstruction vertex] at (g5v8) {};
    \node[obstruction vertex] at (g5v9) {};
  \end{tikzpicture}}

\expandafter\def\csname obstructiongraph@6\endcsname{\begin{tikzpicture}[x=1cm,y=1cm,baseline=(current bounding box.center)]
    \coordinate (g6v1) at (-0.2682,0.3520);
    \coordinate (g6v2) at (-0.1960,1.1104);
    \coordinate (g6v3) at (0.0170,-0.4117);
    \coordinate (g6v4) at (0.6947,0.7570);
    \coordinate (g6v5) at (0.0703,-0.7978);
    \coordinate (g6v6) at (0.3308,0.2799);
    \coordinate (g6v7) at (-0.7073,-0.0175);
    \coordinate (g6v8) at (-0.9491,-1.0562);
    \coordinate (g6v9) at (1.0078,-0.2160);
    \draw[obstruction edge] (g6v1) -- (g6v2);
    \draw[obstruction edge] (g6v1) -- (g6v3);
    \draw[obstruction edge] (g6v1) -- (g6v4);
    \draw[obstruction edge] (g6v2) -- (g6v4);
    \draw[obstruction edge] (g6v2) -- (g6v6);
    \draw[obstruction edge] (g6v3) -- (g6v6);
    \draw[obstruction edge] (g6v2) -- (g6v7);
    \draw[obstruction edge] (g6v5) -- (g6v7);
    \draw[obstruction edge] (g6v6) -- (g6v7);
    \draw[obstruction edge] (g6v1) -- (g6v8);
    \draw[obstruction edge] (g6v3) -- (g6v8);
    \draw[obstruction edge] (g6v5) -- (g6v8);
    \draw[obstruction edge] (g6v7) -- (g6v8);
    \draw[obstruction edge] (g6v3) -- (g6v9);
    \draw[obstruction edge] (g6v4) -- (g6v9);
    \draw[obstruction edge] (g6v5) -- (g6v9);
    \draw[obstruction edge] (g6v6) -- (g6v9);
    \node[obstruction vertex] at (g6v1) {};
    \node[obstruction vertex] at (g6v2) {};
    \node[obstruction vertex] at (g6v3) {};
    \node[obstruction vertex] at (g6v4) {};
    \node[obstruction vertex] at (g6v5) {};
    \node[obstruction vertex] at (g6v6) {};
    \node[obstruction vertex] at (g6v7) {};
    \node[obstruction vertex] at (g6v8) {};
    \node[obstruction vertex] at (g6v9) {};
  \end{tikzpicture}}

\expandafter\def\csname obstructiongraph@7\endcsname{\begin{tikzpicture}[x=1cm,y=1cm,baseline=(current bounding box.center)]
    \coordinate (g7v1) at (-1.3817,-0.3274);
    \coordinate (g7v2) at (0.4886,-0.1623);
    \coordinate (g7v3) at (0.5344,0.6805);
    \coordinate (g7v4) at (-0.4501,0.0962);
    \coordinate (g7v5) at (1.0749,-0.2481);
    \coordinate (g7v6) at (0.4287,0.2688);
    \coordinate (g7v7) at (-0.1222,-0.7305);
    \coordinate (g7v8) at (-0.0056,-0.2129);
    \coordinate (g7v9) at (-0.5667,0.6358);
    \draw[obstruction edge] (g7v3) -- (g7v4);
    \draw[obstruction edge] (g7v3) -- (g7v5);
    \draw[obstruction edge] (g7v4) -- (g7v6);
    \draw[obstruction edge] (g7v5) -- (g7v6);
    \draw[obstruction edge] (g7v1) -- (g7v7);
    \draw[obstruction edge] (g7v2) -- (g7v7);
    \draw[obstruction edge] (g7v4) -- (g7v7);
    \draw[obstruction edge] (g7v5) -- (g7v7);
    \draw[obstruction edge] (g7v2) -- (g7v8);
    \draw[obstruction edge] (g7v4) -- (g7v8);
    \draw[obstruction edge] (g7v6) -- (g7v8);
    \draw[obstruction edge] (g7v1) -- (g7v9);
    \draw[obstruction edge] (g7v3) -- (g7v9);
    \draw[obstruction edge] (g7v4) -- (g7v9);
    \node[obstruction vertex] at (g7v1) {};
    \node[obstruction vertex] at (g7v2) {};
    \node[obstruction vertex] at (g7v3) {};
    \node[obstruction vertex] at (g7v4) {};
    \node[obstruction vertex] at (g7v5) {};
    \node[obstruction vertex] at (g7v6) {};
    \node[obstruction vertex] at (g7v7) {};
    \node[obstruction vertex] at (g7v8) {};
    \node[obstruction vertex] at (g7v9) {};
  \end{tikzpicture}}

\expandafter\def\csname obstructiongraph@8\endcsname{\begin{tikzpicture}[x=1cm,y=1cm,baseline=(current bounding box.center)]
    \coordinate (g8v1) at (0.3736,0.4049);
    \coordinate (g8v2) at (-0.5020,-0.8916);
    \coordinate (g8v3) at (1.1292,-0.3340);
    \coordinate (g8v4) at (-0.5676,0.0297);
    \coordinate (g8v5) at (0.2886,-0.0991);
    \coordinate (g8v6) at (0.4413,-1.0356);
    \coordinate (g8v7) at (-0.3662,0.6069);
    \coordinate (g8v8) at (-1.0342,-0.0812);
    \coordinate (g8v9) at (0.2373,1.4000);
    \draw[obstruction edge] (g8v1) -- (g8v3);
    \draw[obstruction edge] (g8v1) -- (g8v4);
    \draw[obstruction edge] (g8v2) -- (g8v4);
    \draw[obstruction edge] (g8v3) -- (g8v5);
    \draw[obstruction edge] (g8v4) -- (g8v5);
    \draw[obstruction edge] (g8v2) -- (g8v6);
    \draw[obstruction edge] (g8v3) -- (g8v6);
    \draw[obstruction edge] (g8v5) -- (g8v6);
    \draw[obstruction edge] (g8v5) -- (g8v7);
    \draw[obstruction edge] (g8v2) -- (g8v8);
    \draw[obstruction edge] (g8v7) -- (g8v8);
    \draw[obstruction edge] (g8v1) -- (g8v9);
    \draw[obstruction edge] (g8v7) -- (g8v9);
    \node[obstruction vertex] at (g8v1) {};
    \node[obstruction vertex] at (g8v2) {};
    \node[obstruction vertex] at (g8v3) {};
    \node[obstruction vertex] at (g8v4) {};
    \node[obstruction vertex] at (g8v5) {};
    \node[obstruction vertex] at (g8v6) {};
    \node[obstruction vertex] at (g8v7) {};
    \node[obstruction vertex] at (g8v8) {};
    \node[obstruction vertex] at (g8v9) {};
  \end{tikzpicture}}

\expandafter\def\csname obstructiongraph@9\endcsname{\begin{tikzpicture}[x=1cm,y=1cm,baseline=(current bounding box.center)]
    \coordinate (g9v1) at (0.0000,1.4200);
    \coordinate (g9v2) at (0.8347,-1.1488);
    \coordinate (g9v3) at (-0.8347,-1.1488);
    \coordinate (g9v4) at (-1.3505,0.4388);
    \coordinate (g9v5) at (1.3505,0.4388);
    \coordinate (g9v6) at (0.4173,-0.8313);
    \coordinate (g9v7) at (0.0000,0.1213);
    \coordinate (g9v8) at (-0.4173,-0.8313);
    \coordinate (g9v9) at (0.0000,-0.5138);
    \draw[obstruction edge] (g9v2) -- (g9v3);
    \draw[obstruction edge] (g9v1) -- (g9v4);
    \draw[obstruction edge] (g9v3) -- (g9v4);
    \draw[obstruction edge] (g9v1) -- (g9v5);
    \draw[obstruction edge] (g9v2) -- (g9v5);
    \draw[obstruction edge] (g9v2) -- (g9v6);
    \draw[obstruction edge] (g9v4) -- (g9v7);
    \draw[obstruction edge] (g9v5) -- (g9v7);
    \draw[obstruction edge] (g9v3) -- (g9v8);
    \draw[obstruction edge] (g9v6) -- (g9v9);
    \draw[obstruction edge] (g9v7) -- (g9v9);
    \draw[obstruction edge] (g9v8) -- (g9v9);
    \node[obstruction vertex] at (g9v1) {};
    \node[obstruction vertex] at (g9v2) {};
    \node[obstruction vertex] at (g9v3) {};
    \node[obstruction vertex] at (g9v4) {};
    \node[obstruction vertex] at (g9v5) {};
    \node[obstruction vertex] at (g9v6) {};
    \node[obstruction vertex] at (g9v7) {};
    \node[obstruction vertex] at (g9v8) {};
    \node[obstruction vertex] at (g9v9) {};
  \end{tikzpicture}}

\expandafter\def\csname obstructiongraph@10\endcsname{\begin{tikzpicture}[x=1cm,y=1cm,baseline=(current bounding box.center)]
    \coordinate (g10v1) at (0.8361,-0.3600);
    \coordinate (g10v2) at (-0.8361,-0.3600);
    \coordinate (g10v3) at (1.4136,0.1343);
    \coordinate (g10v4) at (-1.4136,0.1343);
    \coordinate (g10v5) at (-0.5899,1.0285);
    \coordinate (g10v6) at (-0.6659,-0.9850);
    \coordinate (g10v7) at (0.5899,1.0285);
    \coordinate (g10v8) at (0.6659,-0.9850);
    \coordinate (g10v9) at (0.0000,0.3644);
    \draw[obstruction edge] (g10v4) -- (g10v5);
    \draw[obstruction edge] (g10v2) -- (g10v6);
    \draw[obstruction edge] (g10v4) -- (g10v6);
    \draw[obstruction edge] (g10v3) -- (g10v7);
    \draw[obstruction edge] (g10v5) -- (g10v7);
    \draw[obstruction edge] (g10v1) -- (g10v8);
    \draw[obstruction edge] (g10v3) -- (g10v8);
    \draw[obstruction edge] (g10v6) -- (g10v8);
    \draw[obstruction edge] (g10v1) -- (g10v9);
    \draw[obstruction edge] (g10v2) -- (g10v9);
    \draw[obstruction edge] (g10v5) -- (g10v9);
    \draw[obstruction edge] (g10v7) -- (g10v9);
    \node[obstruction vertex] at (g10v1) {};
    \node[obstruction vertex] at (g10v2) {};
    \node[obstruction vertex] at (g10v3) {};
    \node[obstruction vertex] at (g10v4) {};
    \node[obstruction vertex] at (g10v5) {};
    \node[obstruction vertex] at (g10v6) {};
    \node[obstruction vertex] at (g10v7) {};
    \node[obstruction vertex] at (g10v8) {};
    \node[obstruction vertex] at (g10v9) {};
  \end{tikzpicture}}

\expandafter\def\csname obstructiongraph@11\endcsname{\begin{tikzpicture}[x=1cm,y=1cm,baseline=(current bounding box.center)]
    \coordinate (g11v1) at (-0.7322,-0.7086);
    \coordinate (g11v2) at (0.7322,-0.7086);
    \coordinate (g11v3) at (-0.6453,-0.0730);
    \coordinate (g11v4) at (0.0000,-1.2632);
    \coordinate (g11v5) at (0.0000,0.1259);
    \coordinate (g11v6) at (0.0000,1.2433);
    \coordinate (g11v7) at (1.2188,0.7286);
    \coordinate (g11v8) at (0.6453,-0.0730);
    \coordinate (g11v9) at (-1.2188,0.7286);
    \draw[obstruction edge] (g11v1) -- (g11v2);
    \draw[obstruction edge] (g11v1) -- (g11v3);
    \draw[obstruction edge] (g11v1) -- (g11v4);
    \draw[obstruction edge] (g11v2) -- (g11v4);
    \draw[obstruction edge] (g11v3) -- (g11v5);
    \draw[obstruction edge] (g11v4) -- (g11v5);
    \draw[obstruction edge] (g11v5) -- (g11v6);
    \draw[obstruction edge] (g11v2) -- (g11v7);
    \draw[obstruction edge] (g11v6) -- (g11v7);
    \draw[obstruction edge] (g11v2) -- (g11v8);
    \draw[obstruction edge] (g11v3) -- (g11v8);
    \draw[obstruction edge] (g11v5) -- (g11v8);
    \draw[obstruction edge] (g11v1) -- (g11v9);
    \draw[obstruction edge] (g11v6) -- (g11v9);
    \node[obstruction vertex] at (g11v1) {};
    \node[obstruction vertex] at (g11v2) {};
    \node[obstruction vertex] at (g11v3) {};
    \node[obstruction vertex] at (g11v4) {};
    \node[obstruction vertex] at (g11v5) {};
    \node[obstruction vertex] at (g11v6) {};
    \node[obstruction vertex] at (g11v7) {};
    \node[obstruction vertex] at (g11v8) {};
    \node[obstruction vertex] at (g11v9) {};
  \end{tikzpicture}}

\expandafter\def\csname obstructiongraph@12\endcsname{\begin{tikzpicture}[x=1cm,y=1cm,baseline=(current bounding box.center)]
    \coordinate (g12v1) at (-0.5330,0.6190);
    \coordinate (g12v2) at (-0.4195,-0.8784);
    \coordinate (g12v3) at (0.2417,-0.2461);
    \coordinate (g12v4) at (-0.2417,-0.2461);
    \coordinate (g12v5) at (0.4195,-0.8784);
    \coordinate (g12v6) at (-1.0464,-0.2046);
    \coordinate (g12v7) at (0.5330,0.6190);
    \coordinate (g12v8) at (0.0000,1.4200);
    \coordinate (g12v9) at (1.0464,-0.2046);
    \draw[obstruction edge] (g12v1) -- (g12v3);
    \draw[obstruction edge] (g12v2) -- (g12v3);
    \draw[obstruction edge] (g12v2) -- (g12v4);
    \draw[obstruction edge] (g12v2) -- (g12v5);
    \draw[obstruction edge] (g12v3) -- (g12v5);
    \draw[obstruction edge] (g12v4) -- (g12v5);
    \draw[obstruction edge] (g12v1) -- (g12v6);
    \draw[obstruction edge] (g12v2) -- (g12v6);
    \draw[obstruction edge] (g12v4) -- (g12v6);
    \draw[obstruction edge] (g12v4) -- (g12v7);
    \draw[obstruction edge] (g12v1) -- (g12v8);
    \draw[obstruction edge] (g12v7) -- (g12v8);
    \draw[obstruction edge] (g12v3) -- (g12v9);
    \draw[obstruction edge] (g12v5) -- (g12v9);
    \draw[obstruction edge] (g12v7) -- (g12v9);
    \node[obstruction vertex] at (g12v1) {};
    \node[obstruction vertex] at (g12v2) {};
    \node[obstruction vertex] at (g12v3) {};
    \node[obstruction vertex] at (g12v4) {};
    \node[obstruction vertex] at (g12v5) {};
    \node[obstruction vertex] at (g12v6) {};
    \node[obstruction vertex] at (g12v7) {};
    \node[obstruction vertex] at (g12v8) {};
    \node[obstruction vertex] at (g12v9) {};
  \end{tikzpicture}}

\expandafter\def\csname obstructiongraph@13\endcsname{\begin{tikzpicture}[x=1cm,y=1cm,baseline=(current bounding box.center)]
    \coordinate (g13v1) at (1.0030,0.5140);
    \coordinate (g13v2) at (0.0000,0.7676);
    \coordinate (g13v3) at (-1.3771,-0.3465);
    \coordinate (g13v4) at (-0.4111,-0.6728);
    \coordinate (g13v5) at (0.3385,0.1214);
    \coordinate (g13v6) at (-1.0030,0.5140);
    \coordinate (g13v7) at (-0.3385,0.1214);
    \coordinate (g13v8) at (1.3771,-0.3465);
    \coordinate (g13v9) at (0.4111,-0.6728);
    \draw[obstruction edge] (g13v1) -- (g13v2);
    \draw[obstruction edge] (g13v3) -- (g13v4);
    \draw[obstruction edge] (g13v1) -- (g13v5);
    \draw[obstruction edge] (g13v2) -- (g13v5);
    \draw[obstruction edge] (g13v4) -- (g13v5);
    \draw[obstruction edge] (g13v2) -- (g13v6);
    \draw[obstruction edge] (g13v3) -- (g13v6);
    \draw[obstruction edge] (g13v2) -- (g13v7);
    \draw[obstruction edge] (g13v6) -- (g13v7);
    \draw[obstruction edge] (g13v1) -- (g13v8);
    \draw[obstruction edge] (g13v4) -- (g13v9);
    \draw[obstruction edge] (g13v7) -- (g13v9);
    \draw[obstruction edge] (g13v8) -- (g13v9);
    \node[obstruction vertex] at (g13v1) {};
    \node[obstruction vertex] at (g13v2) {};
    \node[obstruction vertex] at (g13v3) {};
    \node[obstruction vertex] at (g13v4) {};
    \node[obstruction vertex] at (g13v5) {};
    \node[obstruction vertex] at (g13v6) {};
    \node[obstruction vertex] at (g13v7) {};
    \node[obstruction vertex] at (g13v8) {};
    \node[obstruction vertex] at (g13v9) {};
  \end{tikzpicture}}

\expandafter\def\csname obstructiongraph@14\endcsname{\begin{tikzpicture}[x=1cm,y=1cm,baseline=(current bounding box.center)]
    \coordinate (g14v1) at (0.6919,1.0271);
    \coordinate (g14v2) at (-1.4184,-0.0667);
    \coordinate (g14v3) at (0.6291,-1.0694);
    \coordinate (g14v4) at (-0.6291,-1.0694);
    \coordinate (g14v5) at (1.4184,-0.0667);
    \coordinate (g14v6) at (-0.6919,1.0271);
    \coordinate (g14v7) at (-0.3967,0.3706);
    \coordinate (g14v8) at (0.3967,0.3706);
    \coordinate (g14v9) at (0.0000,-0.5235);
    \draw[obstruction edge] (g14v2) -- (g14v4);
    \draw[obstruction edge] (g14v3) -- (g14v4);
    \draw[obstruction edge] (g14v1) -- (g14v5);
    \draw[obstruction edge] (g14v3) -- (g14v5);
    \draw[obstruction edge] (g14v1) -- (g14v6);
    \draw[obstruction edge] (g14v2) -- (g14v6);
    \draw[obstruction edge] (g14v6) -- (g14v7);
    \draw[obstruction edge] (g14v1) -- (g14v8);
    \draw[obstruction edge] (g14v7) -- (g14v8);
    \draw[obstruction edge] (g14v3) -- (g14v9);
    \draw[obstruction edge] (g14v4) -- (g14v9);
    \draw[obstruction edge] (g14v7) -- (g14v9);
    \draw[obstruction edge] (g14v8) -- (g14v9);
    \node[obstruction vertex] at (g14v1) {};
    \node[obstruction vertex] at (g14v2) {};
    \node[obstruction vertex] at (g14v3) {};
    \node[obstruction vertex] at (g14v4) {};
    \node[obstruction vertex] at (g14v5) {};
    \node[obstruction vertex] at (g14v6) {};
    \node[obstruction vertex] at (g14v7) {};
    \node[obstruction vertex] at (g14v8) {};
    \node[obstruction vertex] at (g14v9) {};
  \end{tikzpicture}}

\expandafter\def\csname obstructiongraph@15\endcsname{\begin{tikzpicture}[x=1cm,y=1cm,baseline=(current bounding box.center)]
    \coordinate (g15v1) at (0.7016,0.6385);
    \coordinate (g15v2) at (0.9661,-1.0407);
    \coordinate (g15v3) at (-0.7016,0.6385);
    \coordinate (g15v4) at (-1.3772,0.1832);
    \coordinate (g15v5) at (0.0000,1.4067);
    \coordinate (g15v6) at (-0.3305,-0.4844);
    \coordinate (g15v7) at (0.3305,-0.4844);
    \coordinate (g15v8) at (-0.9661,-1.0407);
    \coordinate (g15v9) at (1.3772,0.1832);
    \draw[obstruction edge] (g15v1) -- (g15v2);
    \draw[obstruction edge] (g15v1) -- (g15v3);
    \draw[obstruction edge] (g15v1) -- (g15v5);
    \draw[obstruction edge] (g15v3) -- (g15v5);
    \draw[obstruction edge] (g15v4) -- (g15v5);
    \draw[obstruction edge] (g15v3) -- (g15v6);
    \draw[obstruction edge] (g15v1) -- (g15v7);
    \draw[obstruction edge] (g15v2) -- (g15v7);
    \draw[obstruction edge] (g15v4) -- (g15v7);
    \draw[obstruction edge] (g15v2) -- (g15v8);
    \draw[obstruction edge] (g15v3) -- (g15v8);
    \draw[obstruction edge] (g15v4) -- (g15v8);
    \draw[obstruction edge] (g15v6) -- (g15v8);
    \draw[obstruction edge] (g15v2) -- (g15v9);
    \draw[obstruction edge] (g15v5) -- (g15v9);
    \draw[obstruction edge] (g15v6) -- (g15v9);
    \node[obstruction vertex] at (g15v1) {};
    \node[obstruction vertex] at (g15v2) {};
    \node[obstruction vertex] at (g15v3) {};
    \node[obstruction vertex] at (g15v4) {};
    \node[obstruction vertex] at (g15v5) {};
    \node[obstruction vertex] at (g15v6) {};
    \node[obstruction vertex] at (g15v7) {};
    \node[obstruction vertex] at (g15v8) {};
    \node[obstruction vertex] at (g15v9) {};
  \end{tikzpicture}}

\expandafter\def\csname obstructiongraph@16\endcsname{\begin{tikzpicture}[x=1cm,y=1cm,baseline=(current bounding box.center)]
    \coordinate (g16v1) at (-0.2569,1.1358);
    \coordinate (g16v2) at (0.3218,-1.3831);
    \coordinate (g16v3) at (0.6440,0.0163);
    \coordinate (g16v4) at (0.2901,-0.2046);
    \coordinate (g16v5) at (-0.1346,0.0690);
    \coordinate (g16v6) at (-1.0572,-0.2819);
    \coordinate (g16v7) at (1.0365,0.1508);
    \coordinate (g16v8) at (-0.5436,0.0428);
    \coordinate (g16v9) at (-0.3001,0.4550);
    \draw[obstruction edge] (g16v2) -- (g16v3);
    \draw[obstruction edge] (g16v1) -- (g16v4);
    \draw[obstruction edge] (g16v3) -- (g16v4);
    \draw[obstruction edge] (g16v4) -- (g16v5);
    \draw[obstruction edge] (g16v1) -- (g16v6);
    \draw[obstruction edge] (g16v2) -- (g16v6);
    \draw[obstruction edge] (g16v4) -- (g16v6);
    \draw[obstruction edge] (g16v1) -- (g16v7);
    \draw[obstruction edge] (g16v2) -- (g16v7);
    \draw[obstruction edge] (g16v3) -- (g16v7);
    \draw[obstruction edge] (g16v5) -- (g16v8);
    \draw[obstruction edge] (g16v6) -- (g16v8);
    \draw[obstruction edge] (g16v1) -- (g16v9);
    \draw[obstruction edge] (g16v5) -- (g16v9);
    \draw[obstruction edge] (g16v8) -- (g16v9);
    \node[obstruction vertex] at (g16v1) {};
    \node[obstruction vertex] at (g16v2) {};
    \node[obstruction vertex] at (g16v3) {};
    \node[obstruction vertex] at (g16v4) {};
    \node[obstruction vertex] at (g16v5) {};
    \node[obstruction vertex] at (g16v6) {};
    \node[obstruction vertex] at (g16v7) {};
    \node[obstruction vertex] at (g16v8) {};
    \node[obstruction vertex] at (g16v9) {};
  \end{tikzpicture}}

\expandafter\def\csname obstructiongraph@17\endcsname{\begin{tikzpicture}[x=1cm,y=1cm,baseline=(current bounding box.center)]
    \coordinate (g17v1) at (0.9600,0.0000);
    \coordinate (g17v2) at (0.2466,1.3984);
    \coordinate (g17v3) at (1.0878,-0.9128);
    \coordinate (g17v4) at (-0.4800,0.8314);
    \coordinate (g17v5) at (0.3677,0.3085);
    \coordinate (g17v6) at (-0.4511,0.1642);
    \coordinate (g17v7) at (0.0834,-0.4727);
    \coordinate (g17v8) at (-1.3344,-0.4857);
    \coordinate (g17v9) at (-0.4800,-0.8314);
    \draw[obstruction edge] (g17v1) -- (g17v2);
    \draw[obstruction edge] (g17v1) -- (g17v3);
    \draw[obstruction edge] (g17v2) -- (g17v4);
    \draw[obstruction edge] (g17v3) -- (g17v4);
    \draw[obstruction edge] (g17v1) -- (g17v5);
    \draw[obstruction edge] (g17v2) -- (g17v5);
    \draw[obstruction edge] (g17v4) -- (g17v6);
    \draw[obstruction edge] (g17v5) -- (g17v6);
    \draw[obstruction edge] (g17v3) -- (g17v7);
    \draw[obstruction edge] (g17v5) -- (g17v7);
    \draw[obstruction edge] (g17v6) -- (g17v7);
    \draw[obstruction edge] (g17v1) -- (g17v8);
    \draw[obstruction edge] (g17v4) -- (g17v8);
    \draw[obstruction edge] (g17v6) -- (g17v8);
    \draw[obstruction edge] (g17v2) -- (g17v9);
    \draw[obstruction edge] (g17v3) -- (g17v9);
    \draw[obstruction edge] (g17v7) -- (g17v9);
    \draw[obstruction edge] (g17v8) -- (g17v9);
    \node[obstruction vertex] at (g17v1) {};
    \node[obstruction vertex] at (g17v2) {};
    \node[obstruction vertex] at (g17v3) {};
    \node[obstruction vertex] at (g17v4) {};
    \node[obstruction vertex] at (g17v5) {};
    \node[obstruction vertex] at (g17v6) {};
    \node[obstruction vertex] at (g17v7) {};
    \node[obstruction vertex] at (g17v8) {};
    \node[obstruction vertex] at (g17v9) {};
  \end{tikzpicture}}

\expandafter\def\csname obstructiongraph@18\endcsname{\begin{tikzpicture}[x=1cm,y=1cm,baseline=(current bounding box.center)]
    \coordinate (g18v1) at (-0.5749,-0.1141);
    \coordinate (g18v2) at (0.8146,-1.1457);
    \coordinate (g18v3) at (-1.3928,-0.2767);
    \coordinate (g18v4) at (0.1692,-0.3872);
    \coordinate (g18v5) at (0.9720,0.1911);
    \coordinate (g18v6) at (-0.3239,0.8460);
    \coordinate (g18v7) at (0.0030,0.4212);
    \coordinate (g18v8) at (0.2980,1.3634);
    \coordinate (g18v9) at (0.0348,-0.8980);
    \draw[obstruction edge] (g18v1) -- (g18v2);
    \draw[obstruction edge] (g18v1) -- (g18v3);
    \draw[obstruction edge] (g18v2) -- (g18v4);
    \draw[obstruction edge] (g18v3) -- (g18v4);
    \draw[obstruction edge] (g18v2) -- (g18v5);
    \draw[obstruction edge] (g18v4) -- (g18v5);
    \draw[obstruction edge] (g18v1) -- (g18v6);
    \draw[obstruction edge] (g18v3) -- (g18v6);
    \draw[obstruction edge] (g18v5) -- (g18v6);
    \draw[obstruction edge] (g18v2) -- (g18v7);
    \draw[obstruction edge] (g18v3) -- (g18v7);
    \draw[obstruction edge] (g18v5) -- (g18v7);
    \draw[obstruction edge] (g18v6) -- (g18v7);
    \draw[obstruction edge] (g18v1) -- (g18v8);
    \draw[obstruction edge] (g18v4) -- (g18v8);
    \draw[obstruction edge] (g18v5) -- (g18v8);
    \draw[obstruction edge] (g18v6) -- (g18v8);
    \draw[obstruction edge] (g18v7) -- (g18v8);
    \draw[obstruction edge] (g18v1) -- (g18v9);
    \draw[obstruction edge] (g18v2) -- (g18v9);
    \draw[obstruction edge] (g18v3) -- (g18v9);
    \draw[obstruction edge] (g18v4) -- (g18v9);
    \draw[obstruction edge] (g18v5) -- (g18v9);
    \draw[obstruction edge] (g18v6) -- (g18v9);
    \node[obstruction vertex] at (g18v1) {};
    \node[obstruction vertex] at (g18v2) {};
    \node[obstruction vertex] at (g18v3) {};
    \node[obstruction vertex] at (g18v4) {};
    \node[obstruction vertex] at (g18v5) {};
    \node[obstruction vertex] at (g18v6) {};
    \node[obstruction vertex] at (g18v7) {};
    \node[obstruction vertex] at (g18v8) {};
    \node[obstruction vertex] at (g18v9) {};
  \end{tikzpicture}}

\expandafter\def\csname obstructiongraph@19\endcsname{\begin{tikzpicture}[x=1cm,y=1cm,baseline=(current bounding box.center)]
    \coordinate (g19v1) at (1.4200,0.0000);
    \coordinate (g19v2) at (1.0878,0.9128);
    \coordinate (g19v3) at (-1.3344,-0.4857);
    \coordinate (g19v4) at (0.2466,1.3984);
    \coordinate (g19v5) at (1.0878,-0.9128);
    \coordinate (g19v6) at (0.2466,-1.3984);
    \coordinate (g19v7) at (-0.7100,-1.2298);
    \coordinate (g19v8) at (-1.3344,0.4857);
    \coordinate (g19v9) at (-0.7100,1.2298);
    \draw[obstruction edge] (g19v1) -- (g19v2);
    \draw[obstruction edge] (g19v2) -- (g19v4);
    \draw[obstruction edge] (g19v3) -- (g19v4);
    \draw[obstruction edge] (g19v1) -- (g19v5);
    \draw[obstruction edge] (g19v3) -- (g19v5);
    \draw[obstruction edge] (g19v4) -- (g19v5);
    \draw[obstruction edge] (g19v2) -- (g19v6);
    \draw[obstruction edge] (g19v5) -- (g19v6);
    \draw[obstruction edge] (g19v1) -- (g19v7);
    \draw[obstruction edge] (g19v3) -- (g19v7);
    \draw[obstruction edge] (g19v6) -- (g19v7);
    \draw[obstruction edge] (g19v2) -- (g19v8);
    \draw[obstruction edge] (g19v3) -- (g19v8);
    \draw[obstruction edge] (g19v6) -- (g19v8);
    \draw[obstruction edge] (g19v1) -- (g19v9);
    \draw[obstruction edge] (g19v4) -- (g19v9);
    \draw[obstruction edge] (g19v7) -- (g19v9);
    \draw[obstruction edge] (g19v8) -- (g19v9);
    \node[obstruction vertex] at (g19v1) {};
    \node[obstruction vertex] at (g19v2) {};
    \node[obstruction vertex] at (g19v3) {};
    \node[obstruction vertex] at (g19v4) {};
    \node[obstruction vertex] at (g19v5) {};
    \node[obstruction vertex] at (g19v6) {};
    \node[obstruction vertex] at (g19v7) {};
    \node[obstruction vertex] at (g19v8) {};
    \node[obstruction vertex] at (g19v9) {};
  \end{tikzpicture}}

\expandafter\def\csname obstructiongraph@20\endcsname{\begin{tikzpicture}[x=1cm,y=1cm,baseline=(current bounding box.center)]
    \coordinate (g20v1) at (-0.2170,-0.6975);
    \coordinate (g20v2) at (0.5963,-0.4219);
    \coordinate (g20v3) at (-0.7304,-0.0091);
    \coordinate (g20v4) at (1.1591,-0.8202);
    \coordinate (g20v5) at (-0.4219,-1.3559);
    \coordinate (g20v6) at (1.1383,0.8489);
    \coordinate (g20v7) at (0.5855,0.4367);
    \coordinate (g20v8) at (-0.4556,1.3449);
    \coordinate (g20v9) at (-1.4199,-0.0177);
    \coordinate (g20v10) at (-0.2344,0.6918);
    \draw[obstruction edge] (g20v1) -- (g20v2);
    \draw[obstruction edge] (g20v1) -- (g20v3);
    \draw[obstruction edge] (g20v1) -- (g20v4);
    \draw[obstruction edge] (g20v2) -- (g20v5);
    \draw[obstruction edge] (g20v3) -- (g20v5);
    \draw[obstruction edge] (g20v2) -- (g20v6);
    \draw[obstruction edge] (g20v2) -- (g20v7);
    \draw[obstruction edge] (g20v4) -- (g20v7);
    \draw[obstruction edge] (g20v3) -- (g20v8);
    \draw[obstruction edge] (g20v7) -- (g20v8);
    \draw[obstruction edge] (g20v1) -- (g20v9);
    \draw[obstruction edge] (g20v3) -- (g20v10);
    \draw[obstruction edge] (g20v6) -- (g20v10);
    \draw[obstruction edge] (g20v7) -- (g20v10);
    \draw[obstruction edge] (g20v9) -- (g20v10);
    \node[obstruction vertex] at (g20v1) {};
    \node[obstruction vertex] at (g20v2) {};
    \node[obstruction vertex] at (g20v3) {};
    \node[obstruction vertex] at (g20v4) {};
    \node[obstruction vertex] at (g20v5) {};
    \node[obstruction vertex] at (g20v6) {};
    \node[obstruction vertex] at (g20v7) {};
    \node[obstruction vertex] at (g20v8) {};
    \node[obstruction vertex] at (g20v9) {};
    \node[obstruction vertex] at (g20v10) {};
  \end{tikzpicture}}

\expandafter\def\csname obstructiongraph@21\endcsname{\begin{tikzpicture}[x=1cm,y=1cm,baseline=(current bounding box.center)]
    \coordinate (g21v1) at (1.4200,0.0000);
    \coordinate (g21v2) at (0.4388,1.3505);
    \coordinate (g21v3) at (0.6700,0.0000);
    \coordinate (g21v4) at (-0.5420,-0.3938);
    \coordinate (g21v5) at (-1.1488,-0.8347);
    \coordinate (g21v6) at (0.4388,-1.3505);
    \coordinate (g21v7) at (-1.1488,0.8347);
    \coordinate (g21v8) at (-0.5420,0.3938);
    \coordinate (g21v9) at (0.2070,0.6372);
    \coordinate (g21v10) at (0.2070,-0.6372);
    \draw[obstruction edge] (g21v1) -- (g21v2);
    \draw[obstruction edge] (g21v3) -- (g21v4);
    \draw[obstruction edge] (g21v1) -- (g21v6);
    \draw[obstruction edge] (g21v3) -- (g21v6);
    \draw[obstruction edge] (g21v5) -- (g21v6);
    \draw[obstruction edge] (g21v2) -- (g21v7);
    \draw[obstruction edge] (g21v4) -- (g21v7);
    \draw[obstruction edge] (g21v5) -- (g21v7);
    \draw[obstruction edge] (g21v2) -- (g21v8);
    \draw[obstruction edge] (g21v3) -- (g21v8);
    \draw[obstruction edge] (g21v1) -- (g21v9);
    \draw[obstruction edge] (g21v4) -- (g21v9);
    \draw[obstruction edge] (g21v5) -- (g21v10);
    \draw[obstruction edge] (g21v8) -- (g21v10);
    \draw[obstruction edge] (g21v9) -- (g21v10);
    \node[obstruction vertex] at (g21v1) {};
    \node[obstruction vertex] at (g21v2) {};
    \node[obstruction vertex] at (g21v3) {};
    \node[obstruction vertex] at (g21v4) {};
    \node[obstruction vertex] at (g21v5) {};
    \node[obstruction vertex] at (g21v6) {};
    \node[obstruction vertex] at (g21v7) {};
    \node[obstruction vertex] at (g21v8) {};
    \node[obstruction vertex] at (g21v9) {};
    \node[obstruction vertex] at (g21v10) {};
  \end{tikzpicture}}

\expandafter\def\csname obstructiongraph@22\endcsname{\begin{tikzpicture}[x=1cm,y=1cm,baseline=(current bounding box.center)]
    \coordinate (g22v1) at (0.9395,0.1532);
    \coordinate (g22v2) at (1.1883,-0.7759);
    \coordinate (g22v3) at (-1.2660,-0.6411);
    \coordinate (g22v4) at (-0.0000,-0.0000);
    \coordinate (g22v5) at (-1.4039,0.2135);
    \coordinate (g22v6) at (-0.6024,0.7371);
    \coordinate (g22v7) at (0.8868,1.1090);
    \coordinate (g22v8) at (-0.3371,-0.8903);
    \coordinate (g22v9) at (0.0778,1.4170);
    \coordinate (g22v10) at (0.5170,-1.3225);
    \draw[obstruction edge] (g22v1) -- (g22v2);
    \draw[obstruction edge] (g22v1) -- (g22v4);
    \draw[obstruction edge] (g22v3) -- (g22v5);
    \draw[obstruction edge] (g22v4) -- (g22v6);
    \draw[obstruction edge] (g22v5) -- (g22v6);
    \draw[obstruction edge] (g22v1) -- (g22v7);
    \draw[obstruction edge] (g22v3) -- (g22v8);
    \draw[obstruction edge] (g22v4) -- (g22v8);
    \draw[obstruction edge] (g22v6) -- (g22v9);
    \draw[obstruction edge] (g22v7) -- (g22v9);
    \draw[obstruction edge] (g22v2) -- (g22v10);
    \draw[obstruction edge] (g22v8) -- (g22v10);
    \node[obstruction vertex] at (g22v1) {};
    \node[obstruction vertex] at (g22v2) {};
    \node[obstruction vertex] at (g22v3) {};
    \node[obstruction vertex] at (g22v4) {};
    \node[obstruction vertex] at (g22v5) {};
    \node[obstruction vertex] at (g22v6) {};
    \node[obstruction vertex] at (g22v7) {};
    \node[obstruction vertex] at (g22v8) {};
    \node[obstruction vertex] at (g22v9) {};
    \node[obstruction vertex] at (g22v10) {};
  \end{tikzpicture}}

\expandafter\def\csname obstructiongraph@23\endcsname{\begin{tikzpicture}[x=1cm,y=1cm,baseline=(current bounding box.center)]
    \coordinate (g23v1) at (0.4800,0.0000);
    \coordinate (g23v2) at (1.0878,0.9128);
    \coordinate (g23v3) at (-0.2400,0.4157);
    \coordinate (g23v4) at (0.1667,0.9454);
    \coordinate (g23v5) at (-0.2400,-0.4157);
    \coordinate (g23v6) at (-0.9021,-0.3283);
    \coordinate (g23v7) at (0.2466,-1.3984);
    \coordinate (g23v8) at (0.7354,-0.6171);
    \coordinate (g23v9) at (-1.3344,0.4857);
    \coordinate (g23v10) at (0.0000,0.0000);
    \draw[obstruction edge] (g23v1) -- (g23v2);
    \draw[obstruction edge] (g23v1) -- (g23v3);
    \draw[obstruction edge] (g23v2) -- (g23v3);
    \draw[obstruction edge] (g23v1) -- (g23v4);
    \draw[obstruction edge] (g23v2) -- (g23v4);
    \draw[obstruction edge] (g23v1) -- (g23v5);
    \draw[obstruction edge] (g23v3) -- (g23v5);
    \draw[obstruction edge] (g23v3) -- (g23v6);
    \draw[obstruction edge] (g23v1) -- (g23v7);
    \draw[obstruction edge] (g23v5) -- (g23v7);
    \draw[obstruction edge] (g23v6) -- (g23v7);
    \draw[obstruction edge] (g23v2) -- (g23v8);
    \draw[obstruction edge] (g23v5) -- (g23v8);
    \draw[obstruction edge] (g23v7) -- (g23v8);
    \draw[obstruction edge] (g23v3) -- (g23v9);
    \draw[obstruction edge] (g23v4) -- (g23v9);
    \draw[obstruction edge] (g23v5) -- (g23v9);
    \draw[obstruction edge] (g23v6) -- (g23v9);
    \draw[obstruction edge] (g23v2) -- (g23v10);
    \draw[obstruction edge] (g23v7) -- (g23v10);
    \draw[obstruction edge] (g23v9) -- (g23v10);
    \node[obstruction vertex] at (g23v1) {};
    \node[obstruction vertex] at (g23v2) {};
    \node[obstruction vertex] at (g23v3) {};
    \node[obstruction vertex] at (g23v4) {};
    \node[obstruction vertex] at (g23v5) {};
    \node[obstruction vertex] at (g23v6) {};
    \node[obstruction vertex] at (g23v7) {};
    \node[obstruction vertex] at (g23v8) {};
    \node[obstruction vertex] at (g23v9) {};
    \node[obstruction vertex] at (g23v10) {};
  \end{tikzpicture}}

\expandafter\def\csname obstructiongraph@24\endcsname{\begin{tikzpicture}[x=1cm,y=1cm,baseline=(current bounding box.center)]
    \coordinate (g24v1) at (0.4298,0.7110);
    \coordinate (g24v2) at (1.3233,-0.5151);
    \coordinate (g24v3) at (-0.4743,0.4442);
    \coordinate (g24v4) at (-0.7384,-0.2035);
    \coordinate (g24v5) at (-0.7688,0.7148);
    \coordinate (g24v6) at (0.3338,-0.3412);
    \coordinate (g24v7) at (-1.0809,-0.4110);
    \coordinate (g24v8) at (0.2231,0.3693);
    \coordinate (g24v9) at (0.0624,-0.6386);
    \coordinate (g24v10) at (0.6901,-0.1299);
    \draw[obstruction edge] (g24v1) -- (g24v2);
    \draw[obstruction edge] (g24v1) -- (g24v3);
    \draw[obstruction edge] (g24v3) -- (g24v4);
    \draw[obstruction edge] (g24v1) -- (g24v5);
    \draw[obstruction edge] (g24v4) -- (g24v5);
    \draw[obstruction edge] (g24v2) -- (g24v6);
    \draw[obstruction edge] (g24v5) -- (g24v7);
    \draw[obstruction edge] (g24v3) -- (g24v8);
    \draw[obstruction edge] (g24v6) -- (g24v8);
    \draw[obstruction edge] (g24v2) -- (g24v9);
    \draw[obstruction edge] (g24v3) -- (g24v9);
    \draw[obstruction edge] (g24v4) -- (g24v9);
    \draw[obstruction edge] (g24v7) -- (g24v9);
    \draw[obstruction edge] (g24v1) -- (g24v10);
    \draw[obstruction edge] (g24v2) -- (g24v10);
    \draw[obstruction edge] (g24v8) -- (g24v10);
    \node[obstruction vertex] at (g24v1) {};
    \node[obstruction vertex] at (g24v2) {};
    \node[obstruction vertex] at (g24v3) {};
    \node[obstruction vertex] at (g24v4) {};
    \node[obstruction vertex] at (g24v5) {};
    \node[obstruction vertex] at (g24v6) {};
    \node[obstruction vertex] at (g24v7) {};
    \node[obstruction vertex] at (g24v8) {};
    \node[obstruction vertex] at (g24v9) {};
    \node[obstruction vertex] at (g24v10) {};
  \end{tikzpicture}}

\expandafter\def\csname obstructiongraph@25\endcsname{\begin{tikzpicture}[x=1cm,y=1cm,baseline=(current bounding box.center)]
    \coordinate (g25v1) at (1.2779,0.0841);
    \coordinate (g25v2) at (0.6116,0.6317);
    \coordinate (g25v3) at (-0.0426,-0.1907);
    \coordinate (g25v4) at (0.1213,0.3172);
    \coordinate (g25v5) at (0.2666,-1.3360);
    \coordinate (g25v6) at (-0.8766,-0.5862);
    \coordinate (g25v7) at (0.1717,-0.7811);
    \coordinate (g25v8) at (-0.6648,0.3181);
    \coordinate (g25v9) at (0.4849,1.1028);
    \coordinate (g25v10) at (-1.3501,0.4402);
    \draw[obstruction edge] (g25v1) -- (g25v2);
    \draw[obstruction edge] (g25v1) -- (g25v3);
    \draw[obstruction edge] (g25v2) -- (g25v3);
    \draw[obstruction edge] (g25v1) -- (g25v4);
    \draw[obstruction edge] (g25v2) -- (g25v4);
    \draw[obstruction edge] (g25v3) -- (g25v4);
    \draw[obstruction edge] (g25v1) -- (g25v5);
    \draw[obstruction edge] (g25v2) -- (g25v5);
    \draw[obstruction edge] (g25v3) -- (g25v6);
    \draw[obstruction edge] (g25v4) -- (g25v6);
    \draw[obstruction edge] (g25v5) -- (g25v6);
    \draw[obstruction edge] (g25v2) -- (g25v7);
    \draw[obstruction edge] (g25v3) -- (g25v7);
    \draw[obstruction edge] (g25v4) -- (g25v7);
    \draw[obstruction edge] (g25v5) -- (g25v7);
    \draw[obstruction edge] (g25v6) -- (g25v7);
    \draw[obstruction edge] (g25v2) -- (g25v8);
    \draw[obstruction edge] (g25v3) -- (g25v8);
    \draw[obstruction edge] (g25v4) -- (g25v8);
    \draw[obstruction edge] (g25v5) -- (g25v8);
    \draw[obstruction edge] (g25v7) -- (g25v8);
    \draw[obstruction edge] (g25v1) -- (g25v9);
    \draw[obstruction edge] (g25v4) -- (g25v9);
    \draw[obstruction edge] (g25v7) -- (g25v9);
    \draw[obstruction edge] (g25v8) -- (g25v9);
    \draw[obstruction edge] (g25v2) -- (g25v10);
    \draw[obstruction edge] (g25v3) -- (g25v10);
    \draw[obstruction edge] (g25v6) -- (g25v10);
    \draw[obstruction edge] (g25v8) -- (g25v10);
    \draw[obstruction edge] (g25v9) -- (g25v10);
    \node[obstruction vertex] at (g25v1) {};
    \node[obstruction vertex] at (g25v2) {};
    \node[obstruction vertex] at (g25v3) {};
    \node[obstruction vertex] at (g25v4) {};
    \node[obstruction vertex] at (g25v5) {};
    \node[obstruction vertex] at (g25v6) {};
    \node[obstruction vertex] at (g25v7) {};
    \node[obstruction vertex] at (g25v8) {};
    \node[obstruction vertex] at (g25v9) {};
    \node[obstruction vertex] at (g25v10) {};
  \end{tikzpicture}}
 \newcommand{\prettyobstructionrows}{$O_{8,1}$ & 8 & 14 & \texttt{GBr\{HK} & $O_{9,1}$ & 9 & 24 & \texttt{H\textbar{}\textasciicircum{}\textbackslash{}bVF}\\
$O_{9,2}$ & 9 & 12 & \texttt{Hg\_\textbar{}?c\textasciigrave{}} & $O_{9,3}$ & 9 & 13 & \texttt{H@GYlbG}\\
$O_{9,4}$ & 9 & 13 & \texttt{HossA?V} & $O_{9,5}$ & 9 & 17 & \texttt{Hu@a\textbackslash{}S\{}\\
$O_{9,6}$ & 9 & 14 & \texttt{H@G]qio} & $O_{9,7}$ & 9 & 13 & \texttt{HULgQEA}\\
$O_{9,8}$ & 9 & 12 & \texttt{HLp?p?F} & $O_{9,9}$ & 9 & 12 & \texttt{H?DPTJI}\\
$O_{9,10}$ & 9 & 14 & \texttt{HuKIJQC} & $O_{9,11}$ & 9 & 15 & \texttt{HY\textasciicircum{}OcCi}\\
$O_{9,12}$ & 9 & 13 & \texttt{H\textasciigrave{}taK?R} & $O_{9,13}$ & 9 & 13 & \texttt{HBj?KCr}\\
$O_{9,14}$ & 9 & 16 & \texttt{HokebhK} & $O_{9,15}$ & 9 & 15 & \texttt{HLFV?YH}\\
$O_{9,16}$ & 9 & 18 & \texttt{HroX[hb} & $O_{9,17}$ & 9 & 24 & \texttt{HrUj[\textasciitilde{}\{}\\
$O_{9,18}$ & 9 & 18 & \texttt{HblLJIR} & $O_{10,1}$ & 10 & 15 & \texttt{IsXA\textasciigrave{}E?Hg}\\
$O_{10,2}$ & 10 & 15 & \texttt{I\textasciigrave{}AirAOAW} & $O_{10,3}$ & 10 & 12 & \texttt{IcG[@\_EOO}\\
$O_{10,4}$ & 10 & 21 & \texttt{I\}gcYS\{Og} & $O_{10,5}$ & 10 & 16 & \texttt{Ipd?PHqoO}\\
$O_{10,6}$ & 10 & 30 & \texttt{I\textasciitilde{}ozzuRXW} &  &  &  & \\
}
 \newcommand{\pivotobstructionrows}{\multicolumn{10}{@{}l}{\bfseries 8 vertices (2 records)}\\[-1pt]
\texttt{GDYz\textasciitilde{}o} & \texttt{Gq?g\textasciitilde{}o} &  &  &  &  &  &  &  & \\
\addlinespace[2pt]
\multicolumn{10}{@{}l}{\bfseries 9 vertices (447 records)}\\[-1pt]
\texttt{H??ZTjM} & \texttt{H??ZTjY} & \texttt{H??ZfQ\textbar{}} & \texttt{H??\textbackslash{}JrN} & \texttt{H??\textbackslash{}RbL} & \texttt{H??\textbackslash{}b\textasciicircum{}Y} & \texttt{H??\textbackslash{}j\textasciicircum{}[} & \texttt{H??\textbackslash{}rZ\textbackslash{}} & \texttt{H??]Rel} & \texttt{H??isjx}\\
\texttt{H??i\textasciitilde{}q\}} & \texttt{H??kzjW} & \texttt{H??y\textbar{}zi} & \texttt{H?@[Pfh} & \texttt{H?CZL\textasciicircum{}y} & \texttt{H?CZLrM} & \texttt{H?CZNQu} & \texttt{H?CZVIu} & \texttt{H?CZ\textasciicircum{}i\}} & \texttt{H?CitbD}\\
\texttt{H?CitzU} & \texttt{H?Ci\textasciitilde{}a\textbar{}} & \texttt{H?Ci\textasciitilde{}rt} & \texttt{H?CjCnY} & \texttt{H?CjeYV} & \texttt{H?Cjlz]} & \texttt{H?Cjmy\}} & \texttt{H?CjszF} & \texttt{H?CkjNX} & \texttt{H?CkrNS}\\
\texttt{H?ClAvT} & \texttt{H?Clanh} & \texttt{H?CmfH\}} & \texttt{H?CqTNy} & \texttt{H?Cq\textbackslash{}\textasciicircum{}u} & \texttt{H?CrUYu} & \texttt{H?CrUY\}} & \texttt{H?CrUZu} & \texttt{H?Cr\textbackslash{}rK} & \texttt{H?CruY\textbar{}}\\
\texttt{H?CsZ\textasciicircum{}w} & \texttt{H?CtQ\textasciicircum{}\textbar{}} & \texttt{H?CuJQz} & \texttt{H?CuJU\{} & \texttt{H?CuJU\textasciitilde{}} & \texttt{H?CurY\textbar{}} & \texttt{H?CvRY\textbackslash{}} & \texttt{H?CyTfa} & \texttt{H?Cy\textbackslash{}Vo} & \texttt{H?CzKvg}\\
\texttt{H?CzKvj} & \texttt{H?CzLrI} & \texttt{H?CzTfK} & \texttt{H?CzUny} & \texttt{H?CzVb]} & \texttt{H?Cz\textasciicircum{}bZ} & \texttt{H?CzdZI} & \texttt{H?CzeQn} & \texttt{H?CzeVn} & \texttt{H?CzuZf}\\
\texttt{H?Czuzm} & \texttt{H?CzvZu} & \texttt{H?C\{RFp} & \texttt{H?C\textbar{}AVp} & \texttt{H?C\textbar{}RfL} & \texttt{H?C\textbar{}Zrp} & \texttt{H?C\textbar{}bVL} & \texttt{H?C\textbar{}j\textasciicircum{}w} & \texttt{H?C\}Rf\{} & \texttt{H?C\}rym}\\
\texttt{H?DH\textbar{}\textasciicircum{}\{} & \texttt{H?DJDe\textbar{}} & \texttt{H?DK\textasciigrave{}\textasciicircum{}p} & \texttt{H?DLZqs} & \texttt{H?DM\textasciigrave{}\}n} & \texttt{H?DPVM]} & \texttt{H?DRLU\textbar{}} & \texttt{H?DTJQx} & \texttt{H?DTVLy} & \texttt{H?DZTvs}\\
\texttt{H?D\textbackslash{}Rny} & \texttt{H?D\textasciigrave{}]fw} & \texttt{H?D\textasciigrave{}szM} & \texttt{H?D\textasciigrave{}\}zm} & \texttt{H?Da\textbar{}Yw} & \texttt{H?Dhtf[} & \texttt{H?Dl\textasciigrave{}\textasciitilde{}Z} & \texttt{H?DleTr} & \texttt{H?DmPmx} & \texttt{H?DmP\textasciitilde{}r}\\
\texttt{H?Dnbq\textbar{}} & \texttt{H?DpUVq} & \texttt{H?Dp\textasciitilde{}Qz} & \texttt{H?DtPvL} & \texttt{H?DuTS\textasciitilde{}} & \texttt{H?EeZgz} & \texttt{H?EjeT\textbar{}} & \texttt{H?Emrh\textasciitilde{}} & \texttt{H?Emzhx} & \texttt{H?EprV\textbackslash{}}\\
\texttt{H?Er]o\textasciitilde{}} & \texttt{H?E\}Rdf} & \texttt{H?F@Zqu} & \texttt{H?FTZpf} & \texttt{H?FadTz} & \texttt{H?GYlrM} & \texttt{H?GZeM\textbar{}} & \texttt{H?GZeYv} & \texttt{H?G\textbackslash{}Qno} & \texttt{H?G]\textasciicircum{}h\}}\\
\texttt{H?Gycva} & \texttt{H?HG\textbar{}br} & \texttt{H?HIle\textbackslash{}} & \texttt{H?HO\{v\{} & \texttt{H?HPuUt} & \texttt{H?HVCp\textbar{}} & \texttt{H?HXeEx} & \texttt{H?H[\textasciigrave{}Vp} & \texttt{H?H\textbackslash{}e[\textasciitilde{}} & \texttt{H?HouEz}\\
\texttt{H?Hqtv\}} & \texttt{H?Hsqrb} & \texttt{H?IPq\textasciitilde{}\{} & \texttt{H?IpqrB} & \texttt{H?J\textbackslash{}rpv} & \texttt{H?J\_udz} & \texttt{H?KRMYv} & \texttt{H?Kci\textasciitilde{}k} & \texttt{H?K\textbar{}Qno} & \texttt{H?LIlfk}\\
\texttt{H?LJdn\textbackslash{}} & \texttt{H?LR\textbackslash{}zu} & \texttt{H?LSrNl} & \texttt{H?LTMtm} & \texttt{H?LT]zu} & \texttt{H?LT\textasciicircum{}j]} & \texttt{H?LTmXj} & \texttt{H?LTnRL} & \texttt{H?LUTnn} & \texttt{H?L[jVr}\\
\texttt{H?L[vNr} & \texttt{H?L\textbackslash{}nR\textasciitilde{}} & \texttt{H?L\textbackslash{}vHv} & \texttt{H?L\textbackslash{}vJV} & \texttt{H?L\textbackslash{}vJv} & \texttt{H?L\textbackslash{}vNs} & \texttt{H?L\textasciicircum{}Thv} & \texttt{H?L\_\}fl} & \texttt{H?LbszT} & \texttt{H?Lcirb}\\
\texttt{H?LciuN} & \texttt{H?Lcivn} & \texttt{H?Lcmrm} & \texttt{H?Lcmt\}} & \texttt{H?Lcun\{} & \texttt{H?LeQ\}v} & \texttt{H?LeSl\textbar{}} & \texttt{H?Lecx\}} & \texttt{H?Lecze} & \texttt{H?Leczm}\\
\texttt{H?Lecz\}} & \texttt{H?Lec\textasciitilde{}m} & \texttt{H?LrKvX} & \texttt{H?LsuNn} & \texttt{H?Lur\textasciicircum{}\{} & \texttt{H?Lutz\}} & \texttt{H?LvCv\textbackslash{}} & \texttt{H?MQ\textasciitilde{}Hn} & \texttt{H?MRJv]} & \texttt{H?MRZqt}\\
\texttt{H?MVBNX} & \texttt{H?MZMfj} & \texttt{H?MZMfz} & \texttt{H?M]blm} & \texttt{H?M]fLz} & \texttt{H?M]rLt} & \texttt{H?MamRp} & \texttt{H?Mamt\}} & \texttt{H?Mamzi} & \texttt{H?ManrM}\\
\texttt{H?Ma\textasciitilde{}b\{} & \texttt{H?Meax\}} & \texttt{H?Mfaz\textbackslash{}} & \texttt{H?Mishb} & \texttt{H?MqQeb} & \texttt{H?MurzN} & \texttt{H?M\textasciitilde{}Avp} & \texttt{H?NVI\}z} & \texttt{H?Na\textbar{}rr} & \texttt{H?Nebv\}}\\
\texttt{H?OXk\textasciicircum{}o} & \texttt{H?O\}@eX} & \texttt{H?SvdX\textbar{}} & \texttt{H?S\textbar{}NfY} & \texttt{H?S\textbar{}nr]} & \texttt{H?S\textasciitilde{}by\}} & \texttt{H?TeP\}v} & \texttt{H?Tt\textasciigrave{}ul} & \texttt{H?TttZr} & \texttt{H?UVhxl}\\
\texttt{H?UXnDz} & \texttt{H?U\textbackslash{}Jfj} & \texttt{H?WPmM\textbar{}} & \texttt{H?Wsqnf} & \texttt{H?W\}dfN} & \texttt{H?Z@ktt} & \texttt{H?ZP\textbar{}rr} & \texttt{H?[u]m\textbar{}} & \texttt{H?[\textbar{}I\textasciitilde{}r} & \texttt{H?\textbackslash{}TLn]}\\
\texttt{H?]RLfL} & \texttt{H?]dan\textbackslash{}} & \texttt{H?]dan\textbar{}} & \texttt{H?\textasciicircum{}C\textasciigrave{}nf} & \texttt{H?cvAxm} & \texttt{H?dRTL\textbar{}} & \texttt{H?dmPlr} & \texttt{H?hUPnt} & \texttt{H?hXcdb} & \texttt{H?hqtny}\\
\texttt{H?iRY\textbar{}\{} & \texttt{H?lAlGv} & \texttt{H?lSbK\textasciitilde{}} & \texttt{H?nretn} & \texttt{H?qaxzo} & \texttt{H?szdne} & \texttt{H?szfNu} & \texttt{H?tdPlV} & \texttt{H?wylfv} & \texttt{H@DMTL\textbar{}}\\
\texttt{H@EAW\textasciitilde{}f} & \texttt{H@EeQxm} & \texttt{H@EfA\textasciicircum{}X} & \texttt{H@HKvj\}} & \texttt{H@HSZv]} & \texttt{H@IQU?\textasciitilde{}} & \texttt{H@LKMc\textasciitilde{}} & \texttt{H@LK\textasciitilde{}Hv} & \texttt{H@LK\textasciitilde{}Jv} & \texttt{H@LK\textasciitilde{}J\textasciitilde{}}\\
\texttt{H@LK\textasciitilde{}bc} & \texttt{H@LK\textasciitilde{}g\textasciitilde{}} & \texttt{H@LLMd\textasciicircum{}} & \texttt{H@LMfM\textasciicircum{}} & \texttt{H@LNCn[} & \texttt{H@LNCn\textasciicircum{}} & \texttt{H@LVB]\textasciicircum{}} & \texttt{H@LVC\textbackslash{}\textbar{}} & \texttt{H@L[Vff} & \texttt{H@L\textbackslash{}UNR}\\
\texttt{H@L\textbackslash{}nRN} & \texttt{H@L\textbackslash{}vJN} & \texttt{H@Lu]t\textasciitilde{}} & \texttt{H@Lu]v\textasciitilde{}} & \texttt{H@MIIff} & \texttt{H@MI\textasciitilde{}Jr} & \texttt{H@MJMd\textasciicircum{}} & \texttt{H@MJMf\textasciicircum{}} & \texttt{H@MJ]\textasciitilde{}u} & \texttt{H@MNIzp}\\
\texttt{H@MQ\textbackslash{}hj} & \texttt{H@MQ\textasciicircum{}ji} & \texttt{H@MRI\textasciicircum{}x} & \texttt{H@MazZR} & \texttt{H@MiuK\textasciitilde{}} & \texttt{H@MiuNv} & \texttt{H@MiuN\textasciitilde{}} & \texttt{H@Mj]nZ} & \texttt{H@Mma\textasciicircum{}p} & \texttt{H@MuZzZ}\\
\texttt{H@OP[\textasciicircum{}[} & \texttt{H@Op\{\textasciitilde{}k} & \texttt{H@Or[zX} & \texttt{H@OsZv]} & \texttt{H@Os]p\}} & \texttt{H@Os]re} & \texttt{H@OsuXm} & \texttt{H@OszZZ} & \texttt{H@Oy\textasciitilde{}rm} & \texttt{H@O\{ZfZ}\\
\texttt{H@O\{uNg} & \texttt{H@O\{uNw} & \texttt{H@O\{vN]} & \texttt{H@PLSl\{} & \texttt{H@P\textbackslash{}TbN} & \texttt{H@P\textbackslash{}dT\textbar{}} & \texttt{H@P\textbackslash{}dV\{} & \texttt{H@P\textbackslash{}\textasciitilde{}p\textasciitilde{}} & \texttt{H@P]TMw} & \texttt{H@PstV\textbackslash{}}\\
\texttt{H@Pstvk} & \texttt{H@Ps\textasciitilde{}Qz} & \texttt{H@Pts\textasciitilde{}l} & \texttt{H@Q@\textbar{}PT} & \texttt{H@QH\textasciitilde{}h\textasciicircum{}} & \texttt{H@QJcze} & \texttt{H@QJkzf} & \texttt{H@QRtZ\{} & \texttt{H@QRtZ\textasciitilde{}} & \texttt{H@QRt\textasciicircum{}\{}\\
\texttt{H@QTrzk} & \texttt{H@QX\textasciitilde{}V\{} & \texttt{H@QYtVc} & \texttt{H@QZRm\}} & \texttt{H@QZbU\textbar{}} & \texttt{H@QZdT\textbar{}} & \texttt{H@QZdV\textasciitilde{}} & \texttt{H@QZd\textasciicircum{}Y} & \texttt{H@QZtze} & \texttt{H@Q\textbackslash{}Q\textbar{}u}\\
\texttt{H@Q\textbackslash{}bT\textbar{}} & \texttt{H@Q\textbackslash{}bvk} & \texttt{H@Q\textasciicircum{}Bu\{} & \texttt{H@Q\textasciicircum{}LpZ} & \texttt{H@Q\textasciicircum{}Rm\textbar{}} & \texttt{H@Q\textasciicircum{}Ry\}} & \texttt{H@Q\textasciicircum{}ThZ} & \texttt{H@Qjeu]} & \texttt{H@Qje\textasciitilde{}\}} & \texttt{H@Qjmv\textbar{}}\\
\texttt{H@Qm\textasciigrave{}v\textbackslash{}} & \texttt{H@Qmbu]} & \texttt{H@Qmrl\textasciitilde{}} & \texttt{H@Qnay\textasciicircum{}} & \texttt{H@QqtVN} & \texttt{H@QsQvf} & \texttt{H@Qsrvk} & \texttt{H@RHsvc} & \texttt{H@RP\{\textasciitilde{}j} & \texttt{H@RSp\textasciitilde{}j}\\
\texttt{H@R\textbackslash{}Ru\textasciitilde{}} & \texttt{H@R\textasciigrave{}uu]} & \texttt{H@Rcru\}} & \texttt{H@Sp\{\textasciicircum{}d} & \texttt{H@Sr[zf} & \texttt{H@SsZNZ} & \texttt{H@Ss\textasciicircum{}L\}} & \texttt{H@Tctnk} & \texttt{H@Tctnl} & \texttt{H@Tc\textbar{}Zp}\\
\texttt{H@Tc\textbar{}ze} & \texttt{H@Tdk\textasciitilde{}l} & \texttt{H@Thmun} & \texttt{H@Tktne} & \texttt{H@TnCmx} & \texttt{H@Ts\textbar{}\textasciicircum{}f} & \texttt{H@TtS\textasciitilde{}e} & \texttt{H@TtTfL} & \texttt{H@TtU\textasciicircum{}u} & \texttt{H@TuTel}\\
\texttt{H@TvCul} & \texttt{H@UBlZ\textasciitilde{}} & \texttt{H@UCjZf} & \texttt{H@UHnD\textasciicircum{}} & \texttt{H@UJH\textasciitilde{}u} & \texttt{H@UJdN\textasciicircum{}} & \texttt{H@UJkzf} & \texttt{H@UJlZr} & \texttt{H@ULbN\textbar{}} & \texttt{H@UTB\textasciitilde{}m}\\
\texttt{H@U\textbackslash{}Rnf} & \texttt{H@Ucrnk} & \texttt{H@UmbK\textasciitilde{}} & \texttt{H@Up\textasciitilde{}vm} & \texttt{H@Uu\textasciicircum{}V\textbar{}} & \texttt{H@UurYn} & \texttt{H@Ux\textasciitilde{}fj} & \texttt{H@VCp\textasciitilde{}f} & \texttt{H@Vb[\}z} & \texttt{H@XSt\textasciicircum{}V}\\
\texttt{H@XStnk} & \texttt{H@XTSl\textasciicircum{}} & \texttt{H@XTSn[} & \texttt{H@XTSn\textasciicircum{}} & \texttt{H@XTSn\textasciitilde{}} & \texttt{H@XTc\textasciitilde{}k} & \texttt{H@X\textbackslash{}e\textasciicircum{}u} & \texttt{H@X\textbackslash{}l\textasciitilde{}y} & \texttt{H@YLinX} & \texttt{H@YVA\}\{}\\
\texttt{H@YYnVv} & \texttt{H@YY\textasciitilde{}qv} & \texttt{H@Y\textasciicircum{}A\textasciitilde{}v} & \texttt{H@Yamq]} & \texttt{H@Yamu]} & \texttt{H@YszzZ} & \texttt{H@ZP\textbar{}v\textasciicircum{}} & \texttt{H@Zakvz} & \texttt{H@\textbackslash{}Sl\textasciicircum{}e} & \texttt{H@]UNNy}\\
\texttt{H@]u]n\textasciitilde{}} & \texttt{H@\textasciicircum{}CjNw} & \texttt{H@\textasciigrave{}I\textbar{}g\textasciitilde{}} & \texttt{H@\textasciigrave{}Jlz]} & \texttt{H@\textasciigrave{}Kb?\textasciitilde{}} & \texttt{H@\textasciigrave{}ZLvw} & \texttt{H@\textasciigrave{}ZLvx} & \texttt{H@\textasciigrave{}\textasciicircum{}B\}\}} & \texttt{H@\textasciigrave{}\textasciicircum{}Ry\}} & \texttt{H@\textasciigrave{}a\textbar{}rN}\\
\texttt{H@biYsz} & \texttt{H@biptv} & \texttt{H@caMLy} & \texttt{H@dTJV[} & \texttt{H@dTRnk} & \texttt{H@dZd\textasciicircum{}f} & \texttt{H@dtIvh} & \texttt{H@dvA]x} & \texttt{H@dvA\}n} & \texttt{H@f\textasciicircum{}Rzr}\\
\texttt{H@fbq\textasciitilde{}\textbar{}} & \texttt{H@hKaLr} & \texttt{H@hUb]\textasciicircum{}} & \texttt{H@h]Dtu} & \texttt{H@huI\}z} & \texttt{H@luvJ\textasciitilde{}} & \texttt{H@meI\textbar{}v} & \texttt{H@pi\textasciigrave{}eN} & \texttt{H@plclZ} & \texttt{H@qaju\}}\\
\texttt{H@qbay]} & \texttt{H@silne} & \texttt{H@uJHl\textasciitilde{}} & \texttt{HAEf@\textasciicircum{}X} & \texttt{HAIZvZu} & \texttt{HAKky\textasciitilde{}f} & \texttt{HAMSRNf} & \texttt{HAMruYn} & \texttt{HAW\textbar{}K\textasciitilde{}q} & \texttt{HAYPmYj}\\
\texttt{HAYsp\textasciitilde{}m} & \texttt{HA\textasciigrave{}hrny} & \texttt{HActJVN} & \texttt{HAgZK\textasciitilde{}u} & \texttt{HBF@[\textasciicircum{}b} & \texttt{HBOHl]]} & \texttt{HBOklVN} & \texttt{HBOk\}Qd} & \texttt{HBQ\textasciigrave{}[vN} & \texttt{HBaX]Tf}\\
\texttt{HBdLJ]\}} & \texttt{HBgQK\textasciicircum{}m} & \texttt{HCHasx\}} & \texttt{HCQj\textasciigrave{}\textasciitilde{}x} & \texttt{HCSp\textasciicircum{}N]} & \texttt{HChhqnV} & \texttt{HKoq\textasciigrave{}]\textasciicircum{}} &  &  & \\
\addlinespace[2pt]
\multicolumn{10}{@{}l}{\bfseries 10 vertices (146 records)}\\[-1pt]
\texttt{I??@\textbar{}X[\textbar{}?} & \texttt{I??EdPKL?} & \texttt{I??IhahrO} & \texttt{I??MTHSM?} & \texttt{I??V@plfO} & \texttt{I??XCdhrg} & \texttt{I??XQel\{O} & \texttt{I??XStevG} & \texttt{I??XYmj\textbar{}?} & \texttt{I??XaUlro}\\
\texttt{I??XfPMlg} & \texttt{I??Y\textbackslash{}xyng} & \texttt{I??Y\textasciigrave{}Ukpo} & \texttt{I??[i]qYW} & \texttt{I??gr]\textasciicircum{}\textbar{}\_} & \texttt{I??kRpUfg} & \texttt{I??moxdmW} & \texttt{I??orRBbg} & \texttt{I??pQrEeo} & \texttt{I??pSXRqg}\\
\texttt{I??u@TWfW} & \texttt{I?AJ@rOF\_} & \texttt{I?APYlheg} & \texttt{I?AQV?wNg} & \texttt{I?AnMhXMg} & \texttt{I?CYPNUmO} & \texttt{I?CYhZIkO} & \texttt{I?CYhZIkW} & \texttt{I?C\textasciigrave{}jZLug} & \texttt{I?C\textasciigrave{}mZYZo}\\
\texttt{I?C\textasciigrave{}mrKZg} & \texttt{I?CaZI\textbackslash{}m\_} & \texttt{I?CaZI\textasciicircum{}mo} & \texttt{I?Ca[xfm\_} & \texttt{I?CeaylNw} & \texttt{I?ChmFXZw} & \texttt{I?ChmVsVo} & \texttt{I?CiClyfw} & \texttt{I?CidrE\textasciitilde{}\_} & \texttt{I?CieKyjW}\\
\texttt{I?Cil?\textbar{}qo} & \texttt{I?ClQg\{ow} & \texttt{I?Clzz[pw} & \texttt{I?Clzzsvw} & \texttt{I?Cmmu\textbar{}\textasciicircum{}W} & \texttt{I?CqkXfiw} & \texttt{I?Cup\textbackslash{}l\textasciitilde{}G} & \texttt{I?CuqW\textasciitilde{}zW} & \texttt{I?CyvC\}jW} & \texttt{I?D?x]rz?}\\
\texttt{I?D@HjImW} & \texttt{I?D@h]Zz?} & \texttt{I?DHeA\_Fw} & \texttt{I?DHmQp\textasciitilde{}w} & \texttt{I?DL@hIcw} & \texttt{I?DPXvlvo} & \texttt{I?DPx]\textbar{}yo} & \texttt{I?DXM\_ziw} & \texttt{I?DXXrRww} & \texttt{I?D[LdiYw}\\
\texttt{I?D[tdn\textasciitilde{}G} & \texttt{I?DuXwzuw} & \texttt{I?E@mpmZo} & \texttt{I?EIhMh[W} & \texttt{I?E\textasciigrave{}Yhz\}o} & \texttt{I?EaGdh\}W} & \texttt{I?Eq@SZwW} & \texttt{I?EzvbNzo} & \texttt{I?FHQcvmo} & \texttt{I?GIKpskg}\\
\texttt{I?GRKXVeo} & \texttt{I?GSHl[q\_} & \texttt{I?GSZHRfw} & \texttt{I?GUQGtmO} & \texttt{I?GXa\textasciicircum{}Uyo} & \texttt{I?GXmV[\textbackslash{}o} & \texttt{I?GqO\textasciitilde{}Uyo} & \texttt{I?HGiqbdg} & \texttt{I?HPco\textasciicircum{}r\_} & \texttt{I?HSzW\textasciitilde{}\}W}\\
\texttt{I?HYTeu\}G} & \texttt{I?H[rqfv\_} & \texttt{I?Hi\textasciigrave{}uZtg} & \texttt{I?IYu\_\textasciitilde{}\textasciitilde{}w} & \texttt{I?IYv?\}\{O} & \texttt{I?I\_\_lJug} & \texttt{I?KYkLhjw} & \texttt{I?KlIzQsw} & \texttt{I?LKHcVqW} & \texttt{I?LT\textasciicircum{}\textasciigrave{}\textbar{}ng}\\
\texttt{I?Lce?nFo} & \texttt{I?LeC\_NDw} & \texttt{I?MPRl]bw} & \texttt{I?MQLT]Zw} & \texttt{I?MZVm\}\textasciicircum{}g} & \texttt{I?OLPhVlO} & \texttt{I?OUH[\{cw} & \texttt{I?Pl\textasciigrave{}eXVW} & \texttt{I?QJC\{v\textbackslash{}g} & \texttt{I?Q\textasciigrave{}dt]\textasciicircum{}g}\\
\texttt{I?RHdc]]W} & \texttt{I?RgpUrTo} & \texttt{I?SU@YeLo} & \texttt{I?W\textbackslash{}LD\textbackslash{}Tw} & \texttt{I?XPdeN\textasciitilde{}w} & \texttt{I?]CGcfTW} & \texttt{I?\textasciigrave{}E@osF\_} & \texttt{I?aQxxjz?} & \texttt{I?cHIdsbw} & \texttt{I?cb?xUhw}\\
\texttt{I?ccIhi[o} & \texttt{I?hRtg\textasciitilde{}\textasciitilde{}G} & \texttt{I?kRAKfsW} & \texttt{I?l?\textasciigrave{}KVxg} & \texttt{I?s\_hdFxW} & \texttt{I@?IShff\_} & \texttt{I@C[ZNJz?} & \texttt{I@H\textbackslash{}IVPew} & \texttt{I@IYrUvvO} & \texttt{I@MbQqVpw}\\
\texttt{I@OSGljjO} & \texttt{I@Pu[wzmw} & \texttt{I@QPKTZZo} & \texttt{I@\_qi\}myW} & \texttt{I@\textasciigrave{}LAovNw} & \texttt{I@d@CLN\textbackslash{}o} & \texttt{I@dYSdnlw} & \texttt{I@iQa[\}Zo} & \texttt{IAI\_zq]vo} & \texttt{IB?[U]mZ\_}\\
\texttt{IB\_OXLJjW} & \texttt{ICHSASnNw} & \texttt{ICH\textasciigrave{}qy]vo} & \texttt{ICKaC\textbackslash{}UXw} & \texttt{ICOo\textbar{}Dluw} & \texttt{ICTKHKzZw} & \texttt{ICTSPkn\textasciitilde{}\_} & \texttt{IDCjKXJrw} & \texttt{IDQ@a[mvW} & \texttt{IGQ]@kyeW}\\
\texttt{IHqHGkZRw} & \texttt{IQQ@\textasciigrave{}OVBo} & \texttt{I\textasciigrave{}?LSleUW} & \texttt{I\textasciigrave{}hQC?ZHw} & \texttt{Io?Y\textasciigrave{}QF]G} & \texttt{Io@y\textasciigrave{}SZ\textasciigrave{}w} &  &  &  & \\
\addlinespace[2pt]
\multicolumn{10}{@{}l}{\bfseries 11 vertices (10 records)}\\[-1pt]
\texttt{J???[pQjJh?} & \texttt{J??BaqDJKz\_} & \texttt{J??OXaaRUF\_} & \texttt{J??\_\textbar{}\textbar{}\}veV\_} & \texttt{J??czQsV]\}\_} & \texttt{J?@ipqo\textasciigrave{}\}\textasciicircum{}?} & \texttt{J?CIhTDkNJ\_} & \texttt{J?CP\textasciigrave{}NHrb\textasciicircum{}?} & \texttt{J?CqcTDijJ\_} & \texttt{J?M@IHBF\{n\_}\\
\addlinespace[2pt]
\multicolumn{10}{@{}l}{\bfseries 12 vertices (4 records)}\\[-1pt]
\texttt{K???@LQi?\{X\_} & \texttt{K???@S[hAcXo} & \texttt{K?@W\textasciigrave{}?b\_oVx]} & \texttt{K?CcIaLKi]ly} &  &  &  &  &  & \\
\addlinespace[2pt]
}
 
\newcommand{\obstructionpanel}[2]{\begin{subfigure}[t]{0.32\textwidth}
    \centering
    \scalebox{1.08}{\obstructiongraph{#1}}
    \caption{$#2$}
  \end{subfigure}}

\title{The Excluded Vertex-Minors and Pivot-Minors\\for Rank-Width at Most Two}
\author{Sang-il Oum\thanks{Supported by the Institute for Basic Science (IBS-R029-C1).}\\
\small Discrete Mathematics Group, Institute for Basic Science (IBS),
Daejeon, South Korea\\
\small Department of Mathematical Sciences, KAIST, Daejeon, South Korea\\ 
\small \texttt{sangil@ibs.re.kr}}
\date{\today}

\begin{document}
\maketitle

\begin{abstract}
We determine both the excluded vertex-minors and the excluded pivot-minors for the class
of graphs of rank-width at most two.  Up to local equivalence and graph isomorphism, there
are exactly 25 excluded vertex-minors: 1 graph on 8 vertices, 18 on 9 vertices,
and 6 on 10 vertices.  Up to pivot equivalence and graph isomorphism, there are exactly
609 excluded pivot-minors: 2 on 8 vertices, 447 on 9 vertices, 146 on 10
vertices, 10 on 11 vertices, and 4 on 12 vertices.  No excluded vertex-minor
occurs on 11--16 vertices, and no excluded pivot-minor occurs on 13--16 vertices;
the author's 16-vertex bound makes both lists complete.

The proof is computer-assisted.  Instead of enumerating all graphs, we reverse the
one-vertex reduction theorem for prime graphs.  For each $n$, we retain exactly the prime $n$-vertex
graphs of rank-width at most two, modulo local equivalence and isomorphism, and extend
those graphs by one vertex.  Local-equivalence classes are identified by an exact canonical
key obtained from the associated isotropic system, the binary row space of $[I\mid A(G)]$.  
A restricted version of the same key classifies
pivot equivalence exactly.  The vertex-minor and pivot-minor computations examine,
respectively, more than $9.0\times 10^{10}$ and $4.9\times 10^{11}$ prime extensions in
their 16-vertex final layers.  
\end{abstract}

\noindent\textbf{Keywords.} rank-width; vertex-minor; pivot-minor; local complementation.

\noindent\textbf{2020 Mathematics Subject Classification.} 05C75, 05C83, 05C85, 68R10.

\section{Introduction}

Rank-width is a width parameter of graphs introduced by Oum and Seymour
\cite{OS2004}.  It measures the complexity of edge cuts by the binary rank of
their bipartite adjacency matrices.  Rank-width is invariant under local complementation
and does not increase under vertex deletion; it is therefore monotone under taking
vertex-minors and pivot-minors \cite{Oum2004}.  For each fixed $k$, the graphs of rank-width at most $k$
are characterized by finitely many excluded vertex-minors and finitely many excluded
pivot-minors \cite{Oum2004a,Oum2004}.  Explicit
lists, however, are difficult to obtain even for small $k$.

Oum \cite{Oum2004} proved that, for every integer $k$, every excluded
vertex-minor or pivot-minor for rank-width at most $k$ has at most $(6^{k+1}-1)/5$ vertices.  
At the
first nontrivial level, 
by Bouchet's theorem (\zcref{thm:bouchet-chain}), $C_5$ is the unique excluded vertex-minor for rank-width at most one, 
and by Allys's theorem (\zcref{thm:allys-chain}), one can deduce easily that $C_5$
and~$C_6$ are the two excluded pivot-minors for rank-width at most one; see also
\cite[Section~4.7]{Oum2016}.  Indeed, graphs of rank-width at most one are precisely
the distance-hereditary graphs \cite[Proposition~7.3]{Oum2004}, and, by
\zcref{prop:primesubgraph}, these are exactly the graphs with no prime induced subgraph
on at least five vertices.

The case $k=2$ became finite enough for a complete search when the author  
improved the bound of $(6^{3}-1)/5=43$ to $16$ in \cite[Theorem~7.6]{Oum2020}.
A direct isomorph-free enumeration of all graphs with at most 16 vertices remains far too large.
The decisive change is to enumerate only a
\emph{prime frontier}: prime graphs of rank-width at most two, one representative per
local-equivalence and isomorphism class.  The reverse step is justified by Bouchet's
chain theorem, stated as \zcref{thm:bouchet-chain}.  We reverse this reduction by adding one vertex in
all possible ways, retain the prime extensions of rank-width at most two as the next
frontier, and test the remaining extensions for vertex-minor minimality.

To quotient these frontiers exactly, we encode the associated isotropic system 
of Bouchet \cite{Bouchet1987a}
by a
colored incidence graph formed from low-weight words
so that if a graph $G$ is locally equivalent to a graph isomorphic to $H$,
then both $G$ and $H$ are encoded into the same key.
This code-graph construction is
adapted from Danielsen and Parker~\cite{DP2009}, who extend Östergård's method for
classifying linear codes~\cite{Ostergard2002}.  Our adaptive variant tests the
automorphisms of the truncated incidence graph on the full isotropic space before
accepting a weight cutoff; \zcref{sec:lc-key} gives the construction and its correctness
proof.

The pivot-minor computation requires a finer frontier because a local-equivalence class
may contain many pivot-equivalence classes.  Beginning with graphs on five vertices, we use
Allys's chain theorem (\zcref{thm:allys-chain}) to extend complete pivot frontiers
successively to graphs on at most~12 vertices, inserting the exceptional cycle classes
explicitly.  As a cross-check for graphs on 12 vertices, we directly split every complete
local-equivalence class: we enumerate its
labeled local-complementation orbit, quotient first by graph isomorphism, and deduplicate
by an exact pivot-equivalence key.

The later layers search only for excluded pivot-minors rather than complete pivot frontiers.
For candidate graphs on 13, 14, 15, or 16 vertices, a stronger chain theorem of the author
(\zcref{thm:oum-chain}) allows us to restrict to prime
$3^{+3}$-rank-connected parents.  For the 13-vertex search, we filter the complete
pivot frontier $\cPiv_{12}$ directly.  For the 14--16-vertex searches, we select the
relevant local-equivalence classes from the complete local-equivalence frontiers and
split exactly those classes into pivot-equivalence classes.
For candidate graphs on 15 or 16 vertices, we additionally require a four-vertex set $X$
of cut-rank $\rho(X)\leq2$; \zcref{prop:excluded-not-three-plus-one} justifies this filter.
Each candidate is tested by deletion and by one pivot-deletion at every vertex;
one incident edge suffices because all choices yield pivot-equivalent graphs.  A
restriction of the same classifier gives an exact pivot-equivalence
key, so neither construction relies on probabilistic hashing.  Finally, an independent
subset dynamic program verifies the rank-width and pivot-minimality of all 609 reported
graphs without using the search implementation.

Our main result is the following.  In $O_{n,i}$, the first subscript is the number of
vertices and the second is the position in the graph6 list of \zcref{app:graph6}.

\begin{theorem}\label{thm:main}
Let $G$ be a graph.  Then $\rw(G)\leq 2$ if and only if none of the 25 graphs
\[
  O_{8,1},\quad O_{9,1},\ldots,O_{9,18},\quad
  O_{10,1},\ldots,O_{10,6}
\]
shown in \zcref{fig:o8,fig:o9,fig:o10} is a vertex-minor of $G$.
The 25 graphs are pairwise inequivalent under local complementation and graph
isomorphism.  Each has rank-width three and every proper vertex-minor has rank-width at
most two.
\end{theorem}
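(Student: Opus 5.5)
The plan is to reduce the theorem to a finite, exhaustive computation organized around prime frontiers. The reduction has two halves.

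\emph{Finiteness.} By \cite[Theorem~7.6]{Oum2020}, every excluded vertex-minor for rank-width at most two has at most 16 vertices. So it suffices to find, up to local equivalence and isomorphism, all graphs $H$ on at most 16 vertices with $\rw(H)=3$ such that $\rw(H\setminus v)\le 2$ for every vertex $v$. Local complementation preserves rank-width, and every proper vertex-minor is a vertex-minor of some $H\setminus v$. Hence vertex-minor minimality is exactly this one-vertex-deletion condition, checked on a single representative of each local-equivalence class.

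\emph{Primality and reversal.} Every excluded vertex-minor is prime. A split $(X,Y)$ with $\rho(X)\le 1$ would let one glue rank-decompositions of the two sides, each a proper vertex-minor of rank-width at most two, which would give width at most two. By Bouchet's chain theorem (\zcref{thm:bouchet-chain}), a prime graph on $n+1\ge 6$ vertices has a vertex $v$ such that $H\setminus v$, or a locally equivalent graph, is prime. Thus every excluded $H$ on $n+1$ vertices arises, up to local equivalence, as a one-vertex extension of a prime $n$-vertex graph. By minimality that graph has rank-width at most two. Inductively, define $\mathcal F_n$ as a set of representatives of prime $n$-vertex graphs with $\rw\le2$, modulo local equivalence and isomorphism, starting from $C_5$ at $n=5$. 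To build $\mathcal F_{n+1}$ and the candidate obstructions on $n+1$ vertices, extend each member of $\mathcal F_n$ by one new vertex with every possible neighbourhood and keep only the prime extensions. An extension with $\rw\le2$ goes into $\mathcal F_{n+1}$. An extension with $\rw=3$ is tested for minimality by deleting each vertex. Since extending different representatives of one class only changes the result by local equivalence (the new vertex's neighbourhood can be transported), one representative per class suffices.

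\emph{Verification.} The remaining steps are as follows.
\begin{enumerate}
\item Compute rank-width at most two exactly, using a subset dynamic program over cut-ranks.
\item Quotient by local equivalence and isomorphism using the canonical isotropic-system key of \zcref{sec:lc-key}. Its correctness, that equal keys hold exactly for locally equivalent up to isomorphism, is proved there and is assumed here.
\item Run the search through $n=16$ and record that the surviving obstructions are exactly the 25 listed graphs, with none on 11--16 vertices.
\end{enumerate}
Pairwise inequivalence of the 25 graphs follows from their distinct keys. The statement that each has rank-width three and all proper vertex-minors have rank-width at most two is confirmed independently, by the subset dynamic program applied to each graph and each one-vertex deletion.

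The main obstacle is the \emph{completeness} of the search, not its correctness on the found graphs. Two things must hold. First, the frontier must not lose classes; this requires that every prime graph with $\rw\le2$ on $n+1$ vertices really is reached from $\mathcal F_n$, which is the content of the chain theorem together with closure of $\rw\le2$ under vertex deletion. Second, the canonical key must be exact, so that no class is merged wrongly. The sheer size of the 16-vertex layer, about $9\times10^{10}$ extensions, is a practical obstacle. I would handle it by restricting extensions to new neighbourhoods modulo the automorphism group of each parent, and by discarding non-prime extensions early, before any rank-width computation.
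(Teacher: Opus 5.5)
Your proposal has a genuine gap in how it tests minimality. You claim that every proper vertex-minor of $H$ is a vertex-minor of some $H\setminus v$, so that it suffices to check $\rw(H\setminus v)\le 2$ for all $v$ on a single representative of each class. This is false. A proper vertex-minor is an induced subgraph of some $H'$ locally equivalent to $H$, and $H'\setminus v$ need not be locally equivalent to $H\setminus v$. In fact the condition ``every one-vertex deletion has rank-width at most $k$'' is not even invariant under local complementation.

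One level down there is a concrete counterexample. We have $\rw(C_6)=2$, and every $C_6\setminus w\cong P_5$ has rank-width one. Yet $C_6*v\setminus v\cong C_5$ has rank-width two. So $C_6$ passes your test for $k=1$, although it is not an excluded vertex-minor for rank-width at most one; it is only an excluded pivot-minor. At $k=2$ the paper's data show the same phenomenon at scale. Of the 609 excluded pivot-minor classes, only 480 come from the 25 vertex-minor classes. Every graph in the remaining 129 classes has rank-width three and all one-vertex deletions of rank-width at most two, but it is not vertex-minor-minimal.

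Completeness survives this error, because every graph locally equivalent to a true excluded vertex-minor passes your test. Soundness does not. Your search may report spurious classes, and its output depends on which representatives happen to be chosen. Neither the search nor your final check ``on each graph and each one-vertex deletion'' proves the theorem's assertion that every proper vertex-minor of each $O_{n,i}$ has rank-width at most two. Nor does it prove that exactly 25 classes survive.

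The missing ingredient is \zcref{prop:three-reductions}: every vertex-minor of $H$ on $V(H)\setminus\{v\}$ is locally equivalent to $H\setminus v$, $H*v\setminus v$, or $H/v$. Hence a graph of rank-width three is an excluded vertex-minor if and only if all three elementary reductions at every vertex have rank-width at most two. This condition is a class invariant, so checking one representative is legitimate. It is exactly the test the paper uses, both in the search and in the final certification.

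Two smaller points. First, \zcref{thm:excluded-pivot-bound} is stated for pivot-minors, so you need the remark that proper pivot-minors of an excluded vertex-minor are proper vertex-minors. Second, Bouchet's theorem gives a prime graph among the three elementary reductions at some $v$, not primeness of $H\setminus v$ itself. Since primeness is a local-equivalence invariant, your phrasing as written is too strong.
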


The corresponding pivot-minor list is much larger; its 609 graph6 records are given in
\zcref{tab:pivot-graph6}.

\begin{theorem}\label{thm:pivot-main}
Up to pivot equivalence and graph isomorphism, there are exactly 609 excluded
pivot-minors for the class of graphs of rank-width at most two.  Their distribution by the number of vertices
is
\[
\begin{array}{@{}crrrrrrrrrrr@{}}
\toprule
|V|&6&7&8&9&10&11&12&13&14&15&16\\
\midrule
\text{classes}&0&0&2&447&146&10&4&0&0&0&0\\
\bottomrule
\end{array}.
\]
Consequently, a graph has rank-width at most two if and only if it has no pivot-minor
isomorphic to one of the 609 cataloged graphs.
\end{theorem}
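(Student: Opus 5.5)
The plan is to reduce \zcref{thm:pivot-main} to a finite, verifiable computation, organized around the chain theorems and the 16-vertex bound \cite[Theorem~7.6]{Oum2020}. Every excluded pivot-minor $H$ for $\rw\leq 2$ has $\rw(H)=3$, and every proper pivot-minor of $H$ has rank-width at most two. In particular every $H-v$ has rank-width at most two. Moreover $H$ is prime: a split of $H$ would let us build a rank-decomposition of width at most two from decompositions of the two smaller sides, each of which is a pivot-minor of $H$. Since $|V(H)|\leq 16$, it suffices to classify, for each $n\leq 16$, the prime $n$-vertex graphs $H$ of rank-width three all of whose one-vertex pivot-minor reductions have rank-width at most two. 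Those reductions are $H\setminus v$ and $H\wedge vw\setminus v$ for a single incident edge $vw$, since all such pivot-deletions are pivot-equivalent.

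First, by Allys's chain theorem (\zcref{thm:allys-chain}), every prime graph on at least six vertices, other than the exceptional cycle classes, has a vertex $v$ such that $H\setminus v$ or $H\wedge vw\setminus v$ is prime. Applying this to an excluded $H$ shows that $H$ arises by adding one vertex to a prime graph of rank-width at most two on $n-1$ vertices. I therefore build complete pivot frontiers $\cPiv_n$ inductively for $n\leq 12$. Each $\cPiv_n$ consists of one representative per pivot-equivalence and isomorphism class of prime graphs with $\rw\leq 2$. I extend every member of $\cPiv_{n-1}$ by a new vertex with every neighbourhood, then sort the prime extensions: those with $\rw\leq 2$ go into $\cPiv_n$, deduplicated by the exact pivot key, and those with $\rw=3$ are tested for minimality. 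The cycle classes are inserted explicitly. The correctness of the pivot key, which comes from the restricted isotropic-system classifier of \zcref{sec:lc-key}, guarantees that no class is lost or merged. The 12-vertex frontier is then cross-checked against the split of every local-equivalence class.

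For $13\leq n\leq 16$ the complete pivot frontier is too large, so I use \zcref{thm:oum-chain}. This theorem lets me assume that the parent is prime and $3^{+3}$-rank-connected, which restricts the extensions to pivot classes inside selected local-equivalence classes from the complete LC frontiers. For $n\in\{15,16\}$, \zcref{prop:excluded-not-three-plus-one} additionally lets me require a four-vertex set $X$ with $\rho(X)\leq 2$. Under these restrictions, the search should show that no minimal candidate exists for $n=13,\dots,16$. Collecting the minimal graphs found up to $n=12$ and deduplicating them by the pivot key gives the counts in the table, $2+447+146+10+4=609$. An independent subset dynamic program confirms that each listed graph has rank-width three and that all its one-vertex reductions have rank-width at most two. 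The equivalence stated at the end of the theorem then follows from pivot-minor monotonicity of rank-width \cite{Oum2004}.

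The main obstacle is the completeness argument for the restricted layers $13$--$16$. There I must check that the hypotheses of \zcref{thm:oum-chain} and \zcref{prop:excluded-not-three-plus-one} really apply to every excluded pivot-minor of that size. I must also check that selecting LC classes and then splitting them into pivot classes loses no parent, so that an empty output is a genuine nonexistence proof and not an artifact of the filters. I would first verify that a primality-preserving reduction always lands in a $3^{+3}$-rank-connected graph of rank-width at most two.
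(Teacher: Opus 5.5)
Your overall strategy is the paper's: bound $n\le16$ by \zcref{thm:excluded-pivot-bound}, build complete pivot frontiers with \zcref{thm:allys-chain} up to 12 vertices, and for $13\le n\le16$ use \zcref{thm:oum-chain} with filtered local-equivalence frontiers, plus the four-set filter for $n\in\{15,16\}$. The genuine gap is the one you flag but do not close: you never show that an excluded pivot-minor $G$ is $3^{+2}$-rank-connected. Your split argument gives only primeness. But \zcref{thm:oum-chain} applies only to graphs that are prime \emph{and} $3^{+2}$-rank-connected. Without this, restricting to $3^{+3}$-rank-connected parents for $n\ge13$ is unjustified, and an empty output on 13--16 vertices is not a nonexistence proof.

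The same ingredient is needed for the four-set filter. \zcref{prop:excluded-not-three-plus-one} only tells you that for $n\ge15$ the graph is not $3^{+1}$-rank-connected. That is, some $X$ has $\rho_G(X)\le2$ with both sides of size at least four. To conclude that the smaller side has exactly four vertices, you need $3^{+2}$-rank-connectivity: $\rho_G(X)<3$ then forces $\min\{|X|,|V(G)\setminus X|\}<5$. You cannot instead shrink a larger $X$, because cut-rank is not monotone under taking subsets.

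The missing step is \zcref{prop:excluded-connectivity}. For every $v$, the graphs $G\setminus v$ and $G/v$ are proper pivot-minors, so they have rank-width at most $2<3=\rw(G)$. Hence $G$ is prime and $3^{+2}$-rank-connected, which also subsumes your primeness argument.

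The check you propose at the end targets the wrong statement. You do not need, and should not expect, every primality-preserving one-vertex reduction to be $3^{+3}$-rank-connected. \zcref{thm:oum-chain} is an existence statement, and what must be verified is its hypothesis on the larger graph $G$, not a property of all reductions; rank-width at most two of the parent is immediate from minimality.

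Once $3^{+2}$-rank-connectivity is in hand, the remaining points you list are routine:
\begin{itemize}
\item the local-equivalence class of the parent $H$ lies in the complete frontier $\cF_{n-1}$ by \zcref{prop:frontier-induction};
\item it survives the filter, because local complementation preserves the cut-rank function and hence rank connectivity;
\item exact splitting recovers its pivot class;
\item pivots inside $\widehat G\setminus v$ lift to $\widehat G$, so the generator reaches the class of $G$;
\item the cycle exception in \zcref{thm:allys-chain} is vacuous, since cycles have rank-width at most two.
\end{itemize}
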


\section{Preliminaries}\label{sec:prelim}

\paragraph{Vertex-minors and pivot-minors.}
All graphs are finite and simple.  
For $v\in V(G)$, the \emph{local complementation} $G*v$ toggles adjacency between every
pair of distinct neighbors of $v$.  Graphs related by a sequence of local
complementations are \emph{locally equivalent}.  If $uv\in E(G)$, then
\[
  G\wedge uv=G*u*v*u=G*v*u*v
\]
is the graph obtained by \emph{pivoting} the edge $uv$.  A graph $H$ is a
\emph{vertex-minor} of $G$ if $H$ is an induced subgraph of a graph locally equivalent
to $G$.  Two graphs on the same vertex set are \emph{pivot-equivalent} if one is obtained
from the other by a sequence of edge pivots.  A graph $H$ is a \emph{pivot-minor} of $G$
if it is an induced subgraph of a graph pivot-equivalent to $G$.
We say that $G$ and $H$ are \emph{locally equivalent up to isomorphism}, respectively
\emph{pivot-equivalent up to isomorphism}, if $G$ is locally equivalent, respectively
pivot-equivalent, to a graph isomorphic to $H$.

\paragraph{Cut-rank functions and rank-decompositions of graphs.}
For an $X\times Y$ matrix $M$ and subsets $X'\subseteq X$ and $Y'\subseteq Y$, 
we write $M[X',Y']$ to denote its $X'\times Y'$ submatrix. 
Let $A(G)$ be the adjacency matrix of a graph $G$ over
$\Ftwo$.  For $X\subseteq V(G)$, the \emph{cut-rank} is
\[
  \rho_G(X)=\rank_{\Ftwo} A(G)[X,V(G)\setminus X].
\]
It is symmetric: $\rho_G(X)=\rho_G(V(G)\setminus X)$. More importantly, it is \emph{submodular}: 
\[ 
\rho_G(X)+\rho_G(Y)\ge \rho_G(X\cup Y)+\rho_G(X\cap Y)\quad \text{ for all }X,Y\subseteq V(G).
\] 

A \emph{rank-decomposition} of a graph $G$ with at least two vertices is a pair $(T,\mu)$,
where $T$ is a subcubic tree and $\mu$ is a bijection from $V(G)$ to the leaves of $T$.
Every edge of $T$ displays a bipartition $(X,V(G)\setminus X)$; its width is
$\rho_G(X)$.  The width of $(T,\mu)$ is the maximum width of its edges, and the
\emph{rank-width} $\rw(G)$ is the minimum width of a rank-decomposition.  The standard
convention is that every graph with at most one vertex has rank-width zero.

A \emph{split} of $G$ is a partition $(X,V(G)\setminus X)$ with both parts of size at
least two and $\rho_G(X)\leq 1$.  A connected graph with no split is \emph{prime}.

\begin{proposition}[Hlin{\v e}n\'y, Oum, Seese, and Gottlob~{\cite[Theorem 4.3]{HOSG2006}}]\label{prop:primesubgraph}
  The rank-width of a graph is equal to the maximum rank-width of all of its prime induced subgraphs.
\end{proposition}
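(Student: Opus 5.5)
The statement to prove is \zcref{prop:primesubgraph}: $\rw(G)$ equals the maximum of $\rw(H)$ over prime induced subgraphs $H$ of $G$. I would argue by induction on $\abs{V(G)}$.

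One inequality is immediate. Rank-width does not increase under vertex deletion, because restricting a rank-decomposition of $G$ to $V(H)$ gives one of $H$. After suppressing degree-two nodes and deleting leaves not in $V(H)$, each displayed bipartition of $V(H)$ is the trace of a bipartition $(X,V(G)\setminus X)$. Its cut-rank is the rank of a submatrix of $A(G)[X,V(G)\setminus X]$, so it is at most $\rho_G(X)$. Hence the maximum over prime induced subgraphs is at most $\rw(G)$.

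For the reverse inequality, set $k=\max\{1,\text{max over prime } H\}$. The case where every prime induced subgraph has rank-width $0$ needs care: prime graphs on at least two vertices are connected, so they have rank-width at least one. Graphs with no edges, or whose components are single vertices, have rank-width $0$ trivially, and rank-width of a disconnected graph is the maximum over its components. So I would first reduce to connected $G$, gluing the component decompositions along a new edge of width $0$.

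If connected $G$ is prime, there is nothing to show. Otherwise take a split $(X,Y)$ with $\rho_G(X)=1$ (connected forces exactly $1$). Choose $x\in X$ with a neighbour in $Y$ and $y\in Y$ with a neighbour in $X$. Because the cut matrix has rank one, every vertex of $X$ with a neighbour in $Y$ has exactly the neighbourhood $N\cap Y$, where $N\cap Y$ is the nonzero row, and symmetrically for $Y$. Let $G_1=G[X\cup\{y\}]$ and $G_2=G[Y\cup\{x\}]$. Both are induced subgraphs with fewer vertices, since $\abs{Y},\abs{X}\ge2$. By induction, their rank-widths are bounded by the maxima over their prime induced subgraphs, which are prime induced subgraphs of $G$, so both are at most $k$.

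The main step is gluing. Take rank-decompositions $(T_1,\mu_1)$ of $G_1$ and $(T_2,\mu_2)$ of $G_2$ of width at most $k$. Delete the leaf $\mu_1(y)$ and the leaf $\mu_2(x)$, and join the two trees by an edge between the former neighbours of these leaves (suppressing degree two nodes). The new edge displays $(X,Y)$ with width $1\le k$.

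Now consider an edge inherited from $T_1$. It displays $(A,X\setminus A\cup Y)$, which corresponds to $(A,(X\setminus A)\cup\{y\})$ in $G_1$. The rank-one structure is what makes the widths agree. Every row of $A(G)[A,Y]$ is either zero or the fixed vector $\chi_{N\cap Y}$. Moreover, a vertex of $A$ sees $y$ exactly when its row is nonzero. So the column space contributed by $Y$ in $A(G)[A,V\setminus A]$ equals the span of the single column for $y$ in $A(G_1)[A,(X\setminus A)\cup\{y\}]$. Hence $\rho_G(A)=\rho_{G_1}(A)\le k$, and symmetrically for edges from $T_2$.

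The expected obstacle is only bookkeeping. One must check this identity of column spaces carefully, handle degenerate trees when $\abs{X}=2$, and keep the convention for at most one vertex consistent. With these checked, $\rw(G)\le k$ follows.
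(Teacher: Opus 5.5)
Your proof is correct. The paper gives no argument of its own for this proposition and simply cites \cite[Theorem~4.3]{HOSG2006}, and your split-and-glue induction is the standard proof of that fact: you recurse on $G[X\cup\{y\}]$ and $G[Y\cup\{x\}]$ and use the rank-one identity $\rho_G(A)=\rho_{G[X\cup\{y\}]}(A)$ for $A\subseteq X$. The one implicit point is that $K_1$ and $K_2$ count as prime, so the maximum is at least one whenever $G$ has an edge; this holds under the paper's definition of a prime graph as a connected graph with no split.
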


An \emph{excluded vertex-minor} for rank-width at most two is a graph $G$ with
$\rw(G)>2$ such that every proper vertex-minor of $G$ has rank-width at most two.
An \emph{excluded pivot-minor} is defined in the same way with ``vertex-minor'' replaced
by ``pivot-minor.''  Every excluded vertex-minor and every excluded pivot-minor has
rank-width exactly three: deleting any vertex gives rank-width at most two, and adding
one vertex increases rank-width by at most one.
Moreover, all of them are prime.  Indeed, if such a graph were not prime, then
\zcref{prop:primesubgraph} would give a proper prime induced subgraph of rank-width
three.  That induced subgraph would be a proper vertex-minor, and also a proper
pivot-minor, contradicting the corresponding minimality assumption.

Local complementation preserves the cut-rank function~\cite{Oum2004}.
Therefore, local complementation preserves connectivity and splits, 
so primeness is a
property of a local-equivalence class, see Bouchet~\cite[Lemma 2.2]{Bouchet1987b}.
Moreover, we deduce the following fact, which will be used repeatedly.

\begin{proposition}[Oum \cite{Oum2004}]\label{prop:monotone}
Rank-width is invariant under local complementation and is nonincreasing under vertex
deletion.  Consequently it is nonincreasing under taking vertex-minors.
\end{proposition}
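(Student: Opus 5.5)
The statement to prove is \zcref{prop:monotone}: rank-width is invariant under local complementation and nonincreasing under vertex deletion, hence nonincreasing under taking vertex-minors. The plan is to argue entirely through the cut-rank function. A rank-decomposition $(T,\mu)$ has width determined by the values $\rho_G(X)$ on the bipartitions displayed by edges of $T$. So any operation that preserves the vertex set and the cut-rank function also preserves rank-width, since the same decompositions exist and have identical widths.

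\textbf{Step 1: local complementation preserves cut-rank.} Fix $v$ and $X\subseteq V(G)$. By the symmetry of $\rho$ we may assume $v\in X$; write $Y=V(G)\setminus X$. Local complementation at $v$ toggles the edge $xy$, for $x\in X$ and $y\in Y$, exactly when both are neighbors of $v$. Hence $A(G*v)[X,Y]=A(G)[X,Y]+a b^{T}$, where $a$ is the indicator vector of $N(v)\cap X$ and $b$ is the indicator vector of $N(v)\cap Y$.

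Now $b^{T}$ is precisely the row of $v$ in $A(G)[X,Y]$, and row $v$ itself is unchanged because $v\notin N(v)$. Adding $b^{T}$ to each row indexed by $N(v)\cap X$ is a sequence of elementary row operations on $A(G)[X,Y]$, so the rank is unchanged. Therefore $\rho_{G*v}=\rho_G$, and any rank-decomposition of $G$ is one of $G*v$ of the same width. This gives $\rw(G*v)=\rw(G)$, and by induction the same holds for locally equivalent graphs. The one point needing care here is the reduction to $v\in X$, which symmetry of $\rho$ handles.

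\textbf{Step 2: deletion.} Let $H=G\setminus w$. For $X\subseteq V(H)$, the matrix $A(H)[X,V(H)\setminus X]$ is a submatrix of $A(G)[X,V(G)\setminus X]$, so $\rho_H(X)\le\rho_G(X)$. Take an optimal rank-decomposition $(T,\mu)$ of $G$ and delete the leaf $\mu(w)$. This may create a degree-two node, which we suppress, or, if $|V(H)|\le 1$, the bound is trivial by the stated convention.

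The result is a subcubic tree whose edges display bipartitions of the form $(X\setminus\{w\},\,V(G)\setminus(X\cup\{w\}))$ for bipartitions $(X,V(G)\setminus X)$ displayed in $T$. Their widths are at most the corresponding widths in $G$, so $\rw(H)\le\rw(G)$.

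\textbf{Step 3: combine.} A vertex-minor is obtained by a sequence of local complementations, which preserve rank-width by Step 1, followed by vertex deletions, which do not increase it by Step 2. Hence rank-width is nonincreasing under taking vertex-minors. No step is a real obstacle; the only substantive computation is the rank-one update identity in Step 1.
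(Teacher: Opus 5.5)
Your proof is correct and takes the route the paper indicates: the paper only cites Oum for the invariance of the cut-rank function under local complementation and deduces the proposition from it, and your row-operation argument is the standard proof of that invariance, combined with the submatrix bound for deletion. One small detail in Step~2: if the neighbour of $\mu(w)$ has degree two in $T$, then deleting $\mu(w)$ leaves that neighbour as a leaf carrying no vertex, so it must be removed as well (or you can suppress all degree-two nodes of $T$ beforehand).
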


We recall a tool for prescribing cuts in an optimal decomposition.  Titanic partitions
originate in the branch-width theory of Robertson and Seymour; the form needed here is
the following lemma of Hlin{\v e}n{\'y} and Oum.  If $f$ is a
symmetric submodular function on $2^V$, its branch-width $\bw(f)$ is defined in the
same way as rank-width, with $f$ in place of $\rho_G$; thus
$\rw(G)=\bw(\rho_G)$.  A set $Q\subseteq V$ is \emph{titanic with respect to $f$} if,
for every partition $(Q_1,Q_2,Q_3)$ of $Q$, some $i\in\{1,2,3\}$ satisfies
$f(Q_i)\geq f(Q)$.  A partition $\mathcal P$ of $V$ is \emph{titanic} if each of its
parts is titanic.  Its width is $\max_{Q\in\mathcal P}f(Q)$.  Define the symmetric
submodular function $f^{\mathcal P}$ on $2^{\mathcal P}$ by
\[
 f^{\mathcal P}(\mathcal A)=f\left(\bigcup_{Q\in\mathcal A}Q\right).
\]

\begin{lemma}[Hlin{\v e}n{\'y} and Oum {\cite[Lemma~3.4]{HO2006}}]
\label{lem:titanic-partition}
Let $f$ be a symmetric submodular function on $2^V$ with $\bw(f)\leq k$.  If
$\mathcal P$ is a titanic partition of width at most $k$, then
$\bw(f^{\mathcal P})\leq k$.
\end{lemma}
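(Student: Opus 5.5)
The plan is to start from a branch-decomposition $(T,\mu)$ of $f$ of width at most $k$ and handle the parts of $\mathcal P$ one at a time. For each part $Q$, I will move all of $Q$ onto a single new leaf without increasing the width. Once every part sits on its own leaf, the resulting tree (with leaves labelled by the parts) is a branch-decomposition of $f^{\mathcal P}$. Each of its edges displays a union of parts, so its width is at most $k$. The edges incident with the new leaves have width $f(Q)\le k$, since $\mathcal P$ has width at most $k$.

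Fix a part $Q$, which we may assume has at least two elements. For an edge $e$ of $T$ and one of its sides $X$, the key inequality is submodularity applied to $X$ and $V\setminus Q$, together with the symmetry $f(X\cup(V\setminus Q))=f(Q\setminus X)$:
\[
 f(X\setminus Q)+f(Q\setminus X)\le f(X)+f(Q).
\]
So whenever $f(Q\setminus X)\ge f(Q)$, we get $f(X\setminus Q)\le f(X)$.

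The aim is therefore to find a node $t$ of $T$ with the following property: for every edge $e$, the side $X_e$ of $e$ not containing $t$ satisfies $f(Q\setminus X_e)\ge f(Q)$. Given such a $t$, the modification is as follows. Delete the leaves of $Q$ and suppress degree-two nodes. Subdivide an edge at $t$, or at an edge incident with $t$ if $t$ is a leaf, and hang a new leaf labelled $Q$ there. Every old edge now displays $X_e\setminus Q$, or its complement, and so has width at most $f(X_e)\le k$ by the inequality. The only new edges are the one with width $f(Q)$ and copies of old cuts.

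To find $t$, orient each edge $e$ with sides $X,Y$ from $X$ towards $Y$ when $f(Q\cap Y)\ge f(Q)$, which is the same as $f(Q\setminus X)\ge f(Q)$. By symmetry and the titanic property applied to the partition $(Q\cap X,Q\cap Y,\emptyset)$, at least one direction is allowed, and $f(\emptyset)\le f(Q)$. A finite tree in which every edge carries at least one orientation has a node $t$ at which all incident edges point inward; we take this $t$. At a node with branches $A_1,A_2,A_3$, the titanic condition on $(Q\cap A_1,Q\cap A_2,Q\cap A_3)$ is used to show that the inward orientations propagate consistently. Then every edge points towards $t$, which is exactly the required condition $f(Q\setminus X_e)\ge f(Q)$.

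Finally, I will check that processing $Q$ does not disturb parts already placed on their own leaves. Those leaves are untouched, and the parts are disjoint, so removing $Q$ keeps every other part on a single leaf.

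The main obstacle is the existence argument for $t$, that is, checking that the local titanic condition at each node really yields a global sink rather than an inconsistent orientation. I would first try to argue by contradiction: follow outward-pointing edges along a maximal directed path. At the node where the path must turn back, titanicity applied to the three-way partition of $Q$ forces a contradiction. The degenerate cases, namely $|Q|\le1$, $f(Q)=0$, and $t$ a leaf, would be handled separately.
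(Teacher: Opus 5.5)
Your overall plan is sound: move each part $Q$ onto one leaf near a suitable node $t$, and bound old cuts by $f(X\setminus Q)+f(Q\setminus X)\le f(X)+f(Q)$. However, both steps where the hypotheses actually enter are wrong as written.

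\textbf{The choice of $t$.} A node all of whose \emph{incident} edges admit the inward direction need not satisfy the condition for distant edges, so no propagation argument can succeed. Let $f$ be the cut function of the weighted graph on $\{a,b,c,d\}$ with $w(ac)=2$, $w(bc)=1$, $w(bd)=2$, and let $Q=\{a,b,c\}$. Then $f(Q)=2$, and $Q$ is titanic because each singleton in $Q$ has value at least $2$. Use the cubic tree with leaves $a,b$ at one internal node and $c,d$ at the other. The leaf $a$ is a local sink in your sense, yet the leaf edge of $b$ has $f(Q\setminus\{b\})=f(\{a,c\})=1<f(Q)$.

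Titanicity also cannot supply the consistency you want. The real obstruction is a pair of edges $e,e'$ where $e$ admits only the direction towards its side $A$ not containing $e'$, and $e'$ admits only the direction towards its side $B$ not containing $e$. Here $A\cap B=\emptyset$, and the partition $(Q\cap A,Q\cap B,Q\setminus(A\cup B))$ is already satisfied by $Q\cap A$, so titanicity gives nothing. What excludes this configuration is posimodularity, which follows from submodularity applied to $Q\setminus B$ and $V\setminus(Q\setminus A)$ together with symmetry:
\[
f(Q\setminus A)+f(Q\setminus B)\ \ge\ f(Q\cap A)+f(Q\cap B)\ \ge\ 2f(Q).
\]
This contradicts $f(Q\setminus A),f(Q\setminus B)<f(Q)$. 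With this in hand, contract the edges admitting both directions. The remaining edges are oriented, and a sink exists by counting. That sink is global: on the path from it to an edge pointing away, you would meet two consecutive edges facing away from each other. Equivalently, use the Helly property for subtrees.

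\textbf{The attachment step.} Even for a good $t$, your claim that the only new edges are the $Q$-leaf edge and copies of old cuts fails when $t$ has degree three. If you subdivide the edge from $t$ towards a branch $A$, the edge between $t$ and the subdivision node displays $(V\setminus A)\setminus Q$, which is not an old cut. For example, take the edge-cut function of $C_5$ with vertices $p,x,q_1,q_2,y$ in cyclic order, and $Q=\{q_1,q_2\}$, which is titanic with $f(Q)=2$. Take the width-$2$ tree in which $t$ is adjacent to the leaf $p$ and to the cherries $\{q_1,x\}$ and $\{q_2,y\}$. One checks that $t$ is good for every edge. But hanging $Q$ on the edge $tp$ creates the cut $\{x,y\}$, and $f(\{x,y\})=4$.

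This is where the three-way titanic condition belongs. Apply it to $(Q\cap A_1,Q\cap A_2,Q\cap A_3)$ at $t$ to choose a branch with $f(Q\cap A_i)\ge f(Q)$, and subdivide the edge towards $A_i$. Your inequality then gives $f((V\setminus A_i)\setminus Q)\le f(A_i)+f(Q)-f(Q\cap A_i)\le f(A_i)\le k$.

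In summary, the two-part titanic condition gives the orientations, posimodularity gives the global node, and the three-part condition gives the attachment edge. Your proposal uses the three-part condition in the wrong place and is missing both posimodularity and the choice of the attachment edge.
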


\paragraph{Chain theorems on vertex-minors and pivot-minors.}
For a vertex $v$, choose any neighbor $u$ when one exists and write
$G/v=G\wedge uv\setminus v$; different choices give pivot-equivalent graphs.  If $v$
is isolated, put $G/v=G\setminus v$.  In fact, if $u$ and $w$ are neighbors of $v$, then
\[
G\wedge uv=(G\wedge wv)\wedge uw;
\]
hence the two resulting graphs after deleting $v$
are pivot-equivalent \cite[Proposition~2.5]{Oum2004}.  Thus one incident edge per
nonisolated vertex represents all one-vertex pivot-minors of this form.

\begin{proposition}[{Bouchet \cite[(9.2)]{Bouchet1988} and Fon-Der-Flaass~\cite[Corollary 4.3]{FonDerFlaass1988}}]\label{prop:three-reductions}
  Let $v$ be a vertex of a graph~$G$.
Every vertex-minor of $G$ with vertex set $V(G)\setminus\{v\}$ is locally equivalent to
one of
\[
  G\setminus v,\qquad G*v\setminus v,\qquad G/v.
\]
\end{proposition}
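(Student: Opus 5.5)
The plan is to reduce an arbitrary vertex-minor $H$ of $G$ on $V(G)\setminus\{v\}$ to a single local complementation at $v$ or a single pivot on an edge at $v$, followed by deletion of $v$. Cut-rank plays no role here. The argument only uses the definitions of local complementation and pivoting, together with the pivot identity $G\wedge uv=(G\wedge wv)\wedge uw$ quoted just above.

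Write $H=G'\setminus v$ with $G'$ locally equivalent to $G$. The key lemma is a commutation rule: for $w\neq v$,
\[
(G*w)\setminus v=(G\setminus v)*w,
\]
since local complementation at $w$ only toggles pairs inside $N(w)$. More generally, any sequence of local complementations applied to $G$ can be rewritten as a sequence in which all operations at vertices other than $v$ happen after $v$ has been deleted, provided we track how the operations at $v$ interact with the others. So I will track the pair consisting of the graph together with its \emph{behaviour at $v$}.

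Following Bouchet's isotropic-system viewpoint, I would classify the locally equivalent graphs $G'$ by the ``type'' of $v$. This type is determined by which of the three nonzero elements of $\mathbb F_2^2$ (as in Bouchet's isotropic systems) is the eulerian/deleting vector at $v$. The three possibilities correspond exactly to deletion $\setminus v$, to $*v\setminus v$, and to $\wedge uv\setminus v$. Concretely, the argument goes by induction on the length of the sequence of local complementations producing $G'$. The invariant is that $G'\setminus v$ is locally equivalent to $X\setminus v$ for one of the three graphs $X\in\{G,\;G*v,\;G\wedge uv\}$.

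The inductive step splits into cases.
\begin{itemize}
\item Applying $*w$ with $w\neq v$ and $w\notin N(v)$ changes nothing at $v$; we simply use the commutation rule above.
\item Applying $*v$ swaps the first two classes. It sends the pivot class to itself, using the identity $G\wedge uv*v=G*u*v*u*v$, which agrees with $(G*v)\wedge uv$ up to a local complementation at $u$ after deletion.
\item Applying $*w$ with $w\in N(v)$ requires the identities $(G*w)\setminus v$ for the first class, $(G*w*v)\setminus v=(G\wedge wv*w\dots)$ for the second, and so on. Each of these must be verified to land in one of the three classes modulo local complementations at vertices other than $v$.
\end{itemize}
Independence of the choice of neighbour $u$ follows from $G\wedge uv=(G\wedge wv)\wedge uw$ together with the commutation rule, since $uw$ avoids $v$.

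The main obstacle is the third case, $w\in N(v)$. This is a finite case analysis over adjacency patterns in which each identity has to be checked explicitly. To handle it I would first try to verify these identities on the three-vertex graphs induced by $\{u,v,w\}$ together with an arbitrary outside vertex, since all operations act locally on such configurations.
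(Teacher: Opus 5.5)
The paper does not prove this proposition; it cites Bouchet's isotropic-system argument and Fon-Der-Flaass. Your elementary route would therefore be a legitimate alternative: induct on the number of local complementations and show that each $*x$ permutes the three classes. As written, however, it has a genuine gap. The inductive step is never carried out, and the case you dismiss as trivial is the only one that needs a real identity.

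First, the invariant you state, which concerns $G'\setminus v$ alone, is not inductive. You must carry the whole set of local-equivalence classes of $G'\setminus v$, $G'*v\setminus v$ and $G'/v$, and show it is contained in the corresponding set for $G$.

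Second, take $x\notin N(v)\cup\{v\}$. The commutation rule handles only the deletion class, and $*x*v=*v*x$ handles the second class. Neither says anything about $(G*x)\wedge uv$, because $*x$ changes $N(u)$ when $ux\in E(G)$. In that case $(G*x)\wedge uv\setminus v$ differs from $(G\wedge uv\setminus v)*x$ by complementing all edges inside $N_G(v)\setminus\{u\}$. The correct statement is:
\begin{itemize}
\item $(G*x)\wedge uv=(G\wedge uv)*x$ if $ux\notin E(G)$;
\item $(G*x)\wedge uv=(G\wedge uv)*x*u$ if $ux\in E(G)$.
\end{itemize}
The second identity follows from three steps. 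First, $G*x*u=(G\wedge ux)*x$. Next, $(G\wedge ux)*x*v=((G\wedge ux)\wedge xv)*x$, since $xv$ is an edge of $G\wedge ux$. Finally, $(G\wedge ux)\wedge xv=G\wedge uv$ by the pivot identity quoted in the paper.

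The other two cases also need correcting. In your $*v$ bullet, $G\wedge uv*v=G*v*u$, which after deleting $v$ lies in the class of $G*v\setminus v$, not the pivot class. The identity you need is $(G*v)\wedge uv=G*u*v=(G\wedge uv)*u$.

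The case $x\in N(v)$, which you call the main obstacle, needs no case analysis if you pivot on $xv$ itself. Then $G*x*v=(G\wedge xv)*x$ and $(G*x)\wedge xv=G*v*x$. So after deleting $v$ the second and third classes swap and the first is fixed. Independence of the pivot neighbor covers the other choices of neighbor.

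Finally, your proposed verification on $\{u,v,w\}$ plus one outside vertex cannot certify identities of this kind. Whether two outside vertices $a,b$ end up adjacent depends on both of them, so two outside vertices are needed. With only one, the discrepancy described above for $x\notin N(v)$ would be invisible.
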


We call these three graphs the \emph{elementary reductions} of $G$ at $v$.

The following proposition follows easily from the connection between the pivot operation of graphs
and the principal pivot operation of matrices introduced by Tucker~\cite{Tucker1960}.
A direct proof appeared in Dabrowski, Dross, Jeong, Kant\'e, Kwon, Oum, and Paulusma~\cite{DDJKKOP2023}.

\begin{proposition}[See~{\cite[Lemma 2.3]{DDJKKOP2023}}]\label{prop:two-pivot-reductions}
    Let $v$ be a vertex of a graph~$G$.
Every pivot-minor of~$G$ with vertex set $V(G)\setminus\{v\}$ is pivot-equivalent to one of 
  $G\setminus v$ or $G/v$.
\end{proposition}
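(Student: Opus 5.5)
The plan is to pass from sequences of edge pivots to a single principal pivot of the adjacency matrix over $\Ftwo$, in the sense of Tucker~\cite{Tucker1960}, and then split into cases according to whether $v$ lies in the pivot set.

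Let $H$ be a pivot-minor of $G$ with $V(H)=V(G)\setminus\{v\}$. By definition $H$ is an induced subgraph of some $G'$ pivot-equivalent to $G$. Since $V(G')=V(G)$, this means $H=G'\setminus v$. Write $A=A(G)$. For $X\subseteq V(G)$ with $A[X,X]$ nonsingular, let $A*X$ denote the principal pivot transform. I will use two standard facts.

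\begin{enumerate}
\item For an edge $uv$ we have $A(G\wedge uv)=A*\{u,v\}$. More generally, $A*X$ is again the adjacency matrix of a simple graph (symmetric with zero diagonal), and that graph is pivot-equivalent to $G$. Conversely, every graph pivot-equivalent to $G$ arises as $A*X$ for some $X$ with $A[X,X]$ nonsingular. This uses the composition rule $(A*X)*Y=A*(X\triangle Y)$ whenever both sides are defined.
\item If $w\notin X$, then $(A*X)\setminus w=(A\setminus w)*X$. This is immediate from the block formula, since deleting a row and column outside $X$ commutes with the Schur-complement computation.
\end{enumerate}

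By fact (1), $A(G')=A*X$ for some $X$ with $A[X,X]$ nonsingular.

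\textbf{Case $v\notin X$.} By fact (2), $A(G'\setminus v)=(A\setminus v)*X$. By fact (1) applied to $G\setminus v$, the graph $G'\setminus v$ is pivot-equivalent to $G\setminus v$.

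\textbf{Case $v\in X$.} Since $A[X,X]$ is nonsingular, the row of $A[X,X]^{-1}$ indexed by $v$ is nonzero. Its diagonal entry vanishes, because the inverse of a symmetric zero-diagonal matrix over $\Ftwo$ again has zero diagonal. Hence some $u\in X\setminus\{v\}$ satisfies $(A[X,X]^{-1})_{uv}=1$. I then want to conclude two things.
\begin{enumerate}
\item $uv\in E(G)$, that is, $A[\{u,v\},\{u,v\}]$ is nonsingular.
\item $A*X=(A*\{u,v\})*(X\setminus\{u,v\})$ with the right-hand pivot defined.
\end{enumerate}
Both should follow from the identity $(A*X)[Y,Y]$ nonsingular $\iff$ $A[X\triangle Y,X\triangle Y]$ nonsingular, taking $Y=\{u,v\}$. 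Here $(A*X)[X,X]=A[X,X]^{-1}$, and its $\{u,v\}$ block is nonsingular exactly because the $uv$ entry is $1$. Given (1) and (2), we get $G'=(G\wedge uv)*(X\setminus\{u,v\})$ with $v\notin X\setminus\{u,v\}$. The first case, applied to $G\wedge uv$, then shows that $G'\setminus v$ is pivot-equivalent to $(G\wedge uv)\setminus v=G/v$.

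The main obstacle is the justification of fact (1) and of the choice of $u$. Specifically, one needs the nonsingularity identity together with the composition rule for principal pivots, and the statement that every nonsingular principal pivot of a graph's adjacency matrix is realized by a sequence of edge pivots. I would prove the latter by induction on $|X|$, using exactly the same choice of $u,v$ as in the second case: peel off one edge pivot and observe that the remaining pivot set is two smaller and still nonsingular. If this becomes too lengthy, I would instead cite the principal pivot literature for these matrix identities, as the paper does.
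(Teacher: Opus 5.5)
Your strategy is the principal-pivot argument the paper points to: write $G'$ as a single principal pivot $A*X$ and split on whether $v\in X$. The case $v\notin X$ is fine. The gap is the choice of $u$ in the case $v\in X$.

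Apply the nonsingularity identity to the base matrix $A$ with pivot set $X$ and $Y=\{u,v\}\subseteq X$. It tells you only that $A[X\triangle Y,X\triangle Y]=A[X\setminus\{u,v\},X\setminus\{u,v\}]$ is nonsingular, i.e.\ that $uv$ is an edge of $G'$. It says nothing about $A[\{u,v\},\{u,v\}]$, and both of your claims (1) and (2) can fail. Take the path $a\,b\,c\,d$ with $X=V(G)$ and $v=a$. Then $A^{-1}$ is the adjacency matrix of the path $b\,a\,d\,c$, so $(A^{-1})_{ad}=1$ and $u=d$ satisfies your selection rule. Yet $ad\notin E(G)$, and $A*\{a,d\}$ is undefined.

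What your $u$ actually gives is the factorization $A*X=\bigl(A*(X\setminus\{u,v\})\bigr)*\{u,v\}$, with the pivot on $uv$ performed last. That yields only $G'\setminus v=G''/v$ for some $G''$ pivot-equivalent to $G$. Relating $G''/v$ to $G/v$ is essentially the statement you are trying to prove.

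The repair is short:
\begin{enumerate}
\item Take $u\in X$ with $A_{uv}=1$. It exists because row $v$ of the nonsingular matrix $A[X,X]$ is nonzero, and $u\neq v$ because the diagonal is zero. Then $A*\{u,v\}$ is defined.
\item Apply the identity to the base matrix $A$ with pivot set $\{u,v\}$ and $Y=X\setminus\{u,v\}$, so that $\{u,v\}\triangle Y=X$. This shows $(A*\{u,v\})[X\setminus\{u,v\},X\setminus\{u,v\}]$ is nonsingular because $A[X,X]$ is.
\item The composition rule then gives $A*X=(A*\{u,v\})*(X\setminus\{u,v\})$ with $v\notin X\setminus\{u,v\}$. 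Your first case, applied to $G\wedge uv$, finishes the argument.
\end{enumerate}
In your induction proving that every nonsingular principal pivot is a sequence of edge pivots, either use this corrected choice of $u$ or peel the edge pivot off last. As written, peeling it off first with your $u$ has the same defect.
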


The two chain theorems used to reverse the generation are as follows.  We state their
hypotheses on the number of vertices and their exceptions explicitly.

\begin{theorem}[Bouchet~{\cite[Theorem, p.~244]{Bouchet1987b}}]
\label{thm:bouchet-chain}
Every prime graph~$G$ with at least six vertices has a prime vertex-minor on $\abs{V(G)}-1$ vertices.
\end{theorem}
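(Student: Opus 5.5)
The plan is to prove the stronger local statement: for a prime graph $G$ with $\abs{V(G)}\geq 6$ and \emph{any} vertex $v$, at least one of the three elementary reductions $G\setminus v$, $G*v\setminus v$, $G/v$ of \zcref{prop:three-reductions} is prime. Since each of them is a vertex-minor of $G$ on $\abs{V(G)}-1$ vertices, this gives the theorem. The argument runs entirely through cut-rank functions, so I work with $\rho_G$ and the cut-rank functions of the three reductions on subsets of $V'=V(G)\setminus\{v\}$.

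\textbf{Step 1: local cut-rank relations.} For $X\subseteq V'$, each reduced cut-rank is a rank of a submatrix closely tied to $A(G)$. Write $N=N_G(v)$. Then $\rho_{G\setminus v}(X)$ is the rank of $A(G)[X,V'\setminus X]$. Similarly, $\rho_{G*v\setminus v}(X)$ and $\rho_{G/v}(X)$ are ranks of this matrix modified by the rank-one corrections $\chi_N\chi_N^{T}$ restricted appropriately (the pivot case uses the neighbor $u$ and the principal pivot formula). From this I will derive two facts. First, each of the three reduced values lies in $\{\rho_G(X)-1,\rho_G(X)\}\cap\{\rho_G(X\cup v)-1,\rho_G(X\cup v)\}$ in the appropriate sense. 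Second, and crucially, I will derive the Oum-type inequality: for any two distinct reductions $G_1,G_2$ among the three and any partition $(X,Y)$ of $V'$ together with any other partition $(X',Y')$,
\[
\rho_{G_1}(X)+\rho_{G_2}(X')\;\geq\;\rho_G(X\cap X')+\rho_G(Y\cap Y')-1,
\]
with $v$ added to one side. I would prove this by a direct linear-algebra computation: the two corrections differ in whether $v$'s row or column is absorbed, which loses at most one in rank.

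\textbf{Step 2: uncrossing.} Suppose for contradiction that all three reductions have splits. Take splits $(X_1,Y_1)$ of $G_1$ and $(X_2,Y_2)$ of $G_2$, each of cut-rank at most $1$. Applying the inequality of Step 1 to the four ``corner'' combinations $X_1\cap X_2$, $X_1\cap Y_2$, etc., gives
\[
\rho_G(C)+\rho_G(C')\leq 3
\]
for complementary corner pairs, where $C\cup\{v\}$ or $C'\cup\{v\}$ appear as appropriate. Since $G$ is prime, every set $Z$ with $2\leq \abs{Z}\leq \abs{V(G)}-2$ has $\rho_G(Z)\geq 2$. Hence one of the corners (with or without $v$) must be small, i.e., of size at most $1$ on one side. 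I would then use the third reduction's split to rule out the remaining configurations.

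\textbf{Step 3: the small cases.} I expect the main obstacle to be this bookkeeping when corners have size $0$ or $1$. In that situation a split of $G_i$ nearly coincides with a split of $G_j$; that is, the two partitions agree up to one vertex. The hypothesis $\abs{V(G)}\geq 6$ enters exactly here: with $\abs{V'}\geq 5$, two splits of different reductions cannot be too close without forcing a partition of $V(G)$ with both sides of size at least $2$ and cut-rank at most $1$. To handle these cases, I would first show that if $(X,Y)$ is a split of two different reductions simultaneously, then $X$ or $X\cup\{v\}$ gives a split of $G$. This follows because two different reductions can both drop the rank only when $v$'s neighborhood row lies in the span of the relevant cut. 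Next, I would treat the case where the splits differ by a single vertex $w$, using primeness at $\{v,w\}$ or its complement. The cycle $C_5$ shows that the bound $6$ cannot be lowered, which I would note to sanity-check the counting.
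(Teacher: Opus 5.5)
The paper does not prove this theorem; it quotes it from Bouchet. Your proposal has a genuine gap. The strengthening you set out to prove is false: you claim that for a prime $G$ with $\abs{V(G)}\geq 6$ and \emph{every} vertex $v$, one of $G\setminus v$, $G*v\setminus v$, $G/v$ is prime. So Steps 2--3 cannot be completed.

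Here is a counterexample. Take the $6$-cycle $q\,p\,v\,r\,z\,y\,q$ and add a vertex $x$ adjacent to $q$ and $r$. This is the theta graph with $q$--$r$ paths of lengths $2,3,3$. The graph $G$ is prime by \zcref{lem:prime-extension}, since $C_6$ is prime and $x$ has degree two and no twin. At $v$ we have $N_G(v)=\{p,r\}$ and $N_G(p)=\{v,q\}$.
\begin{itemize}
\item In $G\setminus v$, the vertex $p$ is pendant at $q$.
\item $G*v$ adds the edge $pr$, so in $G*v\setminus v$ both $p$ and $x$ have neighborhood $\{q,r\}$ and are twins.
\item In $G\wedge vp$ the vertex $p$ receives the neighborhood $(N_G(v)\setminus\{p\})\cup\{v\}=\{r,v\}$; one can check this by computing $G*v*p*v$ directly. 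So $p$ is pendant at $r$ in $G/v$.
\end{itemize}
All three reductions therefore have splits. By \zcref{prop:three-reductions}, and since primeness is invariant under local complementation, no prime vertex-minor of $G$ arises by deleting $v$. Yet $G\setminus x=C_6$ is prime, as the theorem requires. The same failure occurs at $p$, $y$ and $z$.

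This example sits exactly in the ``small case'' of your Step 3. The three splits have two-element sides $\{p,q\}$, $\{p,x\}$, $\{p,r\}$, which pairwise meet in exactly one vertex. That configuration satisfies every constraint your uncrossing inequality produces. It is also consistent with your (correct) observation that two different reductions cannot share a split without $G$ having a split. Primeness of $G$ does not exclude it.

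So a purely local argument at a fixed vertex cannot work, because the theorem is existential in $v$. The missing idea is a global step that uses the freedom to choose the deleted vertex. For example, you could show that when all three reductions at $v$ fail, the degenerate configuration forced by your uncrossing (small splits pairwise sharing a vertex, with $\rho_G(X)=\rho_G(X\cup\{v\})=2$) produces some other vertex at which a reduction is prime. Alternatively, you could make an extremal choice of $v$ from the start.
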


\begin{theorem}[Allys~{\cite[Theorem~4.3]{Allys1994}}]
\label{thm:allys-chain}
Every prime graph $G$ with at least six vertices has a prime pivot-minor on
$\abs{V(G)}-1$ vertices, unless $G$ is pivot-equivalent to a cycle.
\end{theorem}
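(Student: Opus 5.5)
The statement is a chain theorem quoted from Allys. The plan is to derive it from Bouchet's chain theorem (\zcref{thm:bouchet-chain}), the reduction propositions (\zcref{prop:three-reductions,prop:two-pivot-reductions}), and submodularity of the cut-rank function. Let $G$ be prime with $n\ge 6$ vertices. By \zcref{prop:two-pivot-reductions}, every pivot-minor of $G$ on $V(G)\setminus\{v\}$ is pivot-equivalent to $G\setminus v$ or $G/v$. Primeness depends only on the local-equivalence class, hence also only on the pivot-equivalence class. So it suffices to show the following: either some vertex $v$ has $G\setminus v$ or $G/v$ prime, or $G$ is pivot-equivalent to a cycle.

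The first step is a local lemma, in the spirit of Bouchet: for a prime graph $G$ on at least five vertices and any vertex $v$, at most one of the three elementary reductions $G\setminus v$, $G*v\setminus v$, $G/v$ is not prime. To prove it, suppose two of them have splits $(X_1,Y_1)$ and $(X_2,Y_2)$ of $V(G)\setminus\{v\}$. The three reductions correspond to the three ways of deleting $v$ from the isotropic system, so the cut-ranks of $G$ and of any two reductions satisfy
\[
\rho_{G}(X\cup\{v\})+\rho_{G}(X)\le \rho_{G_1}(X)+\rho_{G_2}(X)+1
\]
for $X\subseteq V(G)\setminus\{v\}$, where $G_1,G_2$ are the two reductions. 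I would check this directly on the matrices. Next I would combine this inequality with submodularity applied to the crossing sets $X_1$ and $X_2$, adding $v$ to the appropriate side. The aim is to produce a set $Z$ with $\abs{Z}\ge 2$, $\abs{V(G)\setminus Z}\ge 2$ and $\rho_G(Z)\le 1$, contradicting primeness. The small cases, where a side of a split has exactly two vertices, need separate bookkeeping; this is where the hypothesis on the number of vertices is used.

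Now assume that no vertex $v$ has $G\setminus v$ or $G/v$ prime. By the lemma, $G*v\setminus v$ is then prime for every $v$, while $G\setminus v$ and $G/v$ are both non-prime for every $v$. The remaining step, which I expect to be the main obstacle, is to show that this forces $G$ to be pivot-equivalent to a cycle. My first attempt would be a minimal counterexample argument. Choose a graph in the pivot class of $G$ with the fewest edges. For each $v$, take a split $(X_v,Y_v)$ of $G\setminus v$. Since $G$ is prime, $v$ must have neighbours on both sides, and $\rho_G(X_v\cup\{v\})=2=\rho_G(X_v)$. Taking $X_v$ minimal and uncrossing the splits of different vertices by submodularity, I would aim to show that each $X_v$ can be chosen as a pair $\{a,b\}$ with $ab$ an edge. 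I would then use the non-primeness of $G/v$ to show that $v$ has degree exactly two once an edge-minimal representative is taken. A connected graph in which every vertex has degree two is a cycle, which gives the exception.

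If the uncrossing becomes too messy, the fallback is to pass to a prime vertex-minor on $n-1$ vertices, namely $G*v\setminus v$, and induct on $n$. By induction, either that graph has a prime pivot-minor on $n-2$ vertices, which can be lifted back using the lemma at the corresponding vertex, or it is pivot-equivalent to a cycle. In the cycle case one checks directly that $G$ is a one-vertex extension of a cycle all of whose proper one-vertex pivot-minors are non-prime, and such a graph is a longer cycle up to pivoting. The base case $n=6$ would be verified by hand or by the same finite computation used elsewhere in the paper.
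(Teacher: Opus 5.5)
\textbf{The local lemma is false.} No small-case bookkeeping can rescue it. Take $G=C_n$ with $n\geq 6$, and let $v$ be any vertex with a neighbour $w$.
\begin{itemize}
\item $G\setminus v\cong P_{n-1}$ has a split.
\item $G/v=G\wedge vw\setminus v$ is a cycle $C_{n-2}$ with a pendant vertex attached, so it also has a split.
\item Only $G*v\setminus v\cong C_{n-1}$ is prime.
\end{itemize}
So at every vertex of a cycle, two of the three elementary reductions fail to be prime. This failure is unavoidable. If your lemma held, it would prove the statement with no exception at all, whereas cycles are genuine exceptions. Your third paragraph also works under the hypothesis that $G\setminus v$ and $G/v$ are both non-prime, which your lemma would rule out immediately. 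In matroid terms, $C_6$ is the fundamental graph of $M(K_4)$. Your lemma would then say that for every element $e$ of a $3$-connected binary matroid, $M\setminus e$ or $M/e$ is $3$-connected, which fails for the spokes of a wheel.

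Your cut-rank inequality is fine; in its two-set form
\[
\rho_{G_1}(A)+\rho_{G_2}(B)\geq\rho_G(A\cap B)+\rho_G(A\cup B\cup\{v\})-1
\]
it is a lemma of Oum. But uncrossing two splits $(X_1,Y_1)$ and $(X_2,Y_2)$ with it only shows that one of the four sides has exactly two vertices $\{a,b\}$. Then $\{v,a,b\}$ is a three-set of cut-rank two in $G$, the analogue of a triangle or triad. That does not contradict primeness, and it is exactly what happens in a cycle.

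\textbf{The rest of the proposal is therefore unsupported.} What remains is the entire content of Allys's theorem, which the paper quotes rather than reproves.

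Your edge-minimal-representative step assumes what must be shown. A prime graph has minimum degree at least two, so a representative with exactly $n$ edges would indeed be a cycle. However, the low-rank three-sets $\{v,a_v,b_v\}$ at different vertices are a priori realized in different members of the pivot class. You would need a fan-growing argument, in the style of Tutte's wheels-and-whirls theorem, to show that they link into a single chain that closes up around all of $V(G)$ inside one graph.

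The fallback induction has the same problem, for two reasons.
\begin{itemize}
\item Its lifting step invokes the false lemma.
\item Lifting is not automatic anyway, because $G*v$ is not pivot-equivalent to $G$. For example, $C_6*v\setminus v\cong C_5$ is prime, whereas every one-vertex pivot-minor of $C_6$ is not. So a prime pivot-minor of $G*v\setminus v$ would have to be carried back through a lifting lemma that controls when the reinserted vertex becomes pendant or acquires a twin.
\end{itemize}
Finally, the ``direct check'' in the cycle case is again the wheels-and-whirls analysis, not a routine verification.
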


\paragraph{Higher rank connectivity.}

For integers $k$ and $\ell$, a graph $G$ is \emph{$k^{+\ell}$-rank-connected} if
\[
  \rho_G(X)<k\quad\Longrightarrow\quad
  \min\{|X|,|V(G)\setminus X|\}<k+\ell
\]
for every $X\subseteq V(G)$ \cite{Oum2020}.  A graph is \emph{$k$-rank-connected} if it
is $m^{+0}$-rank-connected for every integer $1\leq m\leq k$.  In particular, the
$2$-rank-connected graphs are precisely the connected graphs with no split.

\begin{proposition}[Oum {\cite[Proposition~7.3]{Oum2020}}]
\label{prop:excluded-connectivity}
Let $G$ be a graph with $\rw(G)>2$.  If
$\rw(G\setminus v)<\rw(G)$ and $\rw(G/v)<\rw(G)$ for every vertex $v$ of $G$, then
$G$ is prime and $3^{+2}$-rank-connected.
\end{proposition}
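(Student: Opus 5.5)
The plan has three parts. First we pin down $\rw(G)$, then we show $G$ is prime, and finally we rule out a cut of rank at most two with both sides large. The first two parts are immediate; the third contains all the work, and its delicate point is the gluing step at the end.

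\textbf{Setting $k=\rw(G)=3$.} Since $\rw(G\setminus v)<\rw(G)$ and deleting a vertex lowers rank-width by at most one, every vertex-deleted subgraph has rank-width exactly $\rw(G)-1$. This paragraph only uses $\rw(G)\geq 3$; the last paragraph needs $k=3$. To get it, I would use the fact that the hypotheses say $G$ is vertex-minimal for rank-width $\rw(G)$, so that Oum's bound applies, or simply treat general $k=\rw(G)$ and prove the statement with $k$ in place of $3$ where needed.

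\textbf{Primeness.} Suppose $G$ is not prime. By \zcref{prop:primesubgraph}, some prime induced subgraph $H$ of $G$ has $\rw(H)=\rw(G)$. If $G$ is disconnected, $H$ lies inside one component. If $G$ has a split $(X,V(G)\setminus X)$, then $H$ is a prime induced subgraph of $G$ and hence cannot use both sides nontrivially; indeed, a split restricted to $H$ is a split of $H$ unless one side meets $V(H)$ in at most one vertex. In either case $H\neq G$. So some vertex $v\notin V(H)$ exists, and then $\rw(G\setminus v)\geq \rw(H)=\rw(G)$, a contradiction.

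\textbf{Large cuts of rank at most two.} Suppose $\rho_G(X)=r\leq 2$ with $|X|,|Y|\geq 5$, where $Y=V(G)\setminus X$. For a vertex $v\in Y$ with $|Y|>r$, I would use the standard lemma that at least one of $G\setminus v$ and $G/v$ keeps $\rho(X)=r$. This follows from \zcref{prop:two-pivot-reductions} and the fact that $\rho_{G\setminus v}(X)$ and $\rho_{G/v}(X)$ compute ranks of the two $X\times(Y\setminus v)$ matrices arising from the Schur complement at $v$; at least one of them retains the full rank $r$. Iterating, we obtain a proper pivot-minor $H_X$ on $X\cup Y'$ with $|Y'|=r$ and $\rho_{H_X}(X)=r$. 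Symmetrically we obtain $H_Y$ on $Y\cup X'$. Both are proper pivot-minors, so by the hypothesis and \zcref{prop:monotone} applied to pivot-minors, $\rw(H_X),\rw(H_Y)\leq k-1=2$.

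We then glue rank-decompositions of $H_X$ and $H_Y$ into one for $G$ of width at most $2$, contradicting $\rw(G)=3$. The key identity is that for $Z\subseteq X$, $\rho_G(Z)=\rho_{H_X}(Z)$ and more generally $\rho_G(Z\cup W)$ is controlled by $\rho_{H_X}(Z\cup Y')$ and $\rho_{H_Y}(W\cup X')$ when the pivots are performed inside the deleted part. The obstacle is that the $r$ leaves of $Y'$ must behave as a single leaf in the decomposition of $H_X$, so that the $Y$-side tree can be attached there. For this I would apply \zcref{lem:titanic-partition} to $f=\rho_{H_X}$ with the partition of $X\cup Y'$ into $Y'$ and singletons. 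Sets of size at most $2$ with $f\leq 2$ are titanic, since a partition into three parts forces a part of size $|Y'|$ with full rank, which uses the linking property. This yields a decomposition of $H_X$ of width at most $2$ in which $Y'$ hangs off one edge, and similarly for $H_Y$. The size condition $\geq 5$, rather than $\geq 3$, is where I expect friction: one needs $|Y|>|Y'|$ and room for the titanic argument, and checking precisely that $\ell=2$ suffices is the step I would verify most carefully.
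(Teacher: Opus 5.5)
Your primeness paragraph is fine. The gluing step, which you flagged yourself, fails, and the failure is not a technicality. You claim that $Y'$ with $\abs{Y'}\le 2$ and $f(Y')\le 2$ is titanic. This is false when $Y'=\{y_1,y_2\}$ and $f(Y')=2$: the partition $(\{y_1\},\{y_2\},\varnothing)$ has all three parts of value at most $1<f(Y')$, and a partition into three parts need not contain a part of size $\abs{Y'}$. So the titanic-partition lemma does not give a decomposition of $H_X$ of width at most $\rw(G)-1$ in which $Y'$ is displayed. Without such a decomposition, splicing in the $Y$-side tree can cost one unit of width. For $\rho(X)\le 1$ your gluing does work, since a one-element set is trivially titanic; the breakdown is exactly at cut-rank two.

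Your argument uses the size hypothesis only through $\abs{X},\abs{Y}>\rho_G(X)$. If it were correct, it would show that every cut of rank at most two in such a graph has a side with at most two vertices. That is false. $O_{9,2}$ satisfies the hypotheses, by the paper's main theorem. In the labeling of its graph6 record, vertex $1$ has exactly the neighbours $0$ and $2$, so $X=\{0,1,2\}$ has $\rho(X)=2$ and six vertices on the other side. Your recipe may take $H_Y=G\setminus 1$, which keeps $\rho(Y)=2$. However, no width-two rank-decomposition of $G\setminus 1$ has $0$ and $2$ as siblings. Since $1$ has no neighbour outside $\{0,2\}$, replacing such a cherry by a subtree on $\{0,1,2\}$ would give $\rw(O_{9,2})\le 2$.

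What is missing is a way to attach each side along a genuinely titanic set. A set of cut-rank two with at most three elements is never titanic: split it into singletons. So the part of $Y$ kept in the proper pivot-minor must have at least four vertices, and this is where $\min(\abs{X},\abs{Y})\ge 5$ has to be used. You would then need to show that some deletion or pivot-deletion at a vertex of $Y$ preserves $\rho(Z)$ for all $Z\subseteq X$ while leaving a titanic remainder. Titanicity is automatic, for instance, if the reduced graph is prime, because then every part of cut-rank at most one is empty or a single vertex. None of this is in your sketch. The paper itself does not reprove the statement; it imports it from Oum's 2020 work.

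Two smaller corrections. First, your ``standard lemma'' is false for an arbitrary $v\in Y$. Take an edge $xv$ plus an isolated vertex $w$, with $X=\{x\}$: both $G\setminus v$ and $G/v$ leave $x$ isolated. Moreover, preserving $\rho(X)$ alone is not what the gluing needs. Instead, delete a vertex $v\in Y$ whose column in $A(G)[X,Y]$ lies in the span of the other columns; this preserves $\rho(Z)$ for every $Z\subseteq X$ without any pivots. Second, the hypotheses do not force $\rw(G)=3$, but you do not need that: the gluing only uses $\rho(X)\le 2\le\rw(G)-1$.
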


\begin{theorem}[Oum {\cite[Theorem~7.6]{Oum2020}}]
\label{thm:excluded-pivot-bound}
If $G$ has rank-width three and every proper pivot-minor of~$G$ has rank-width at most
two, then $|V(G)|\leq16$.
\end{theorem}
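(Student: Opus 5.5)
The plan is to combine the structural information from \zcref{prop:excluded-connectivity} with width-$2$ decompositions of the two one-vertex reductions $G\setminus v$ and $G/v$. I will argue by contradiction: assume $|V(G)|\geq 17$ and construct a rank-decomposition of $G$ of width at most two.

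\textbf{Step 1 (connectivity).} Since every proper pivot-minor of $G$ has rank-width at most two, we have $\rw(G\setminus v)\le 2<3=\rw(G)$ and $\rw(G/v)\le 2$ for every vertex $v$. So \zcref{prop:excluded-connectivity} applies, and $G$ is prime and $3^{+2}$-rank-connected. Concretely, every $X\subseteq V(G)$ with $\rho_G(X)\le 2$ satisfies $\min\{|X|,|V(G)\setminus X|\}\le 4$. This is the key quantitative input. Small sets are forced to be the only low-rank sets, and their size bound of $4$ is what should produce the number $16$.

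\textbf{Step 2 (two decompositions sharing a vertex).} Fix $v$, and take width-$2$ rank-decompositions $(T_1,\mu_1)$ of $G\setminus v$ and $(T_2,\mu_2)$ of $G/v$. For $X\subseteq V(G)\setminus\{v\}$, I will use the standard inequality from \cite{Oum2004} relating the three cut-rank functions: roughly,
\[
\rho_{G\setminus v}(X)+\rho_{G/v}(Y)\ \ge\ \rho_G(X\cap Y)+\rho_G(X\cup Y\cup\{v\})-1 .
\]
It comes from submodularity applied to the matrices $A(G)$ and its pivot at an edge $uv$. Combining this inequality with Step 1, if $X$ is displayed by $T_1$ and $Y$ by $T_2$, each of cut-rank at most $2$ in its graph, then one of $X\cap Y$ or $X\cup Y\cup\{v\}$ has $G$-cut-rank at most $2$. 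By Step 1, one of these sets, or its complement, therefore has at most $4$ vertices.

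\textbf{Step 3 (counting/centroid argument).} Choose a centroid node of $T_1$. This splits $V(G)\setminus\{v\}$ into at most three parts, each of size at most half. Do the same in $T_2$. Intersecting the parts gives at most nine cells. Step 2 bounds these cells: the intersections of large sides are forced to be small, at most $4$ each. A careful bookkeeping should show that at most four "large" cells survive, each of size $\le 4$. This would give $|V(G)\setminus\{v\}|\le 15$ unless some displayed set $X$ has both $|X|\ge5$ and $|V(G)\setminus X|\ge5$ with $\rho_G(X)\le2$, which contradicts Step 1.

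\textbf{Step 4 (alternative route via titanic sets).} If the counting in Step 3 falls short by a few vertices, I would instead use \zcref{lem:titanic-partition}. The small low-rank sets of size $\le4$ produced above are titanic with respect to $\rho_G$ when $G$ is $3^{+2}$-rank-connected. Collapsing them, I would glue $T_1$ with $v$ attached and contradict $\rw(G)=3$.

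The main obstacle is Step 3: getting the exact constant $16$ rather than a weaker bound such as the general $(6^{k+1}-1)/5=43$. I would first try the centroid intersection argument with the sharp size-$4$ bound from $3^{+2}$-rank-connectivity. I would fall back on the titanic-partition gluing only for the boundary cases.
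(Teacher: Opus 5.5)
Your Steps 1 and 2 are fine. \zcref{prop:excluded-connectivity} makes $G$ prime and $3^{+2}$-rank-connected, and the inequality you quote does force $\rho_G(X\cap Y)\le2$ or $\rho_G(X\cup Y\cup\{v\})\le2$. The gap is Step 3, which carries the whole theorem but is only asserted, and the nine-cell count you propose cannot reach $16$.

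Let $X_1,X_2,X_3$ and $Y_1,Y_2,Y_3$ be the centroid branches, and put $c_{ij}=|X_i\cap Y_j|$. Every constraint that Step 2 produces from $X_i$ or its complement, paired with $Y_j$ or its complement, is met provided that, for all $i,j$:
\begin{itemize}
\item either $c_{ij}\le4$ or $\sum_{i'\neq i,\,j'\neq j}c_{i'j'}\le4$, and
\item either $|X_i\setminus Y_j|\le4$ or $|Y_j\setminus X_i|\le4$.
\end{itemize}
The array with rows $(4,4,4)$, $(2,2,2)$, $(2,2,2)$ satisfies all of these. Its branch sizes $|X_1|=12$, $|X_2|=|X_3|=6$ and $|Y_j|=8$ are at most half of $24$. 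So this bookkeeping is consistent with $|V(G)|=25$.

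Your arithmetic is also off: four cells of size at most $4$ give $16$, not $15$, and the other five cells are unaccounted for.

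Step 4 does not repair this. A set of two or three vertices with cut-rank two is never titanic, because splitting it into singletons gives parts of cut-rank at most one. In a prime graph every pair of vertices has cut-rank exactly two, so your titanic claim already fails for pairs. Also, attaching $v$ to $T_1$ only yields width at most three, which does not contradict $\rw(G)=3$.

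The missing ingredient is the chain theorem \zcref{thm:oum-chain}; the paper gives no proof of this statement and only cites \cite{Oum2020}, where that chain theorem is proved. With it the argument is short. Suppose $|V(G)|\ge17$.
\begin{enumerate}
\item Since $G$ is prime and $3^{+2}$-rank-connected, it has a prime $3^{+3}$-rank-connected pivot-minor $H$ with $|V(H)|=|V(G)|-1\ge16$, and $\rw(H)\le2$ by minimality.
\item Take a rank-decomposition of $H$ of width at most two. By $3^{+3}$-rank-connectivity, every edge displays a side with at most $5$ vertices, so its other side has at least $11$. Orient each edge toward its larger side.
\item No node has two outgoing edges, since both sides of one of them would then be large. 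Hence exactly one node has no outgoing edge.
\item That node is not a leaf, and each of its at most three branches has at most $5$ leaves. Hence $|V(H)|\le15$, a contradiction.
\end{enumerate}
The constant $16=3\cdot5+1$ thus comes from upgrading to $3^{+3}$-rank-connectivity in a one-vertex-smaller graph of rank-width at most two. It does not come from comparing decompositions of $G\setminus v$ and $G/v$ under $3^{+2}$-rank-connectivity alone.
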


\begin{theorem}[Oum {\cite[Theorem~4.1]{Oum2020}}]
\label{thm:oum-chain}
Every prime $3^{+2}$-rank-connected graph~$G$ with at least ten vertices has a prime
$3^{+3}$-rank-connected pivot-minor on $\abs{V(G)}-1$ vertices.
\end{theorem}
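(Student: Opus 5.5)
The plan is to find a vertex $v$ such that one of the two elementary pivot reductions $G\setminus v$ or $G/v$ is both prime and $3^{+3}$-rank-connected.

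\textbf{The basic inequality.} The tool relating the cut-rank functions of $G\setminus v$, $G/v$ and $G$ is the inequality from the theory of pivot-minors \cite{Oum2004}: for disjoint-from-$v$ sets $X,Y\subseteq V(G)\setminus\{v\}$,
\[
  \rho_{G\setminus v}(X)+\rho_{G/v}(Y)\;\geq\;\rho_G(X\cap Y)+\rho_G(X\cup Y\cup\{v\})-1 .
\]
Suppose that, for a fixed $v$, both reductions fail. Then there is a witness set $X$ in $G\setminus v$ and a witness set $Y$ in $G/v$. Each witness is either a split (cut-rank at most $1$, both sides of size at least $2$) or a set of cut-rank at most $2$ with both sides of size at least $3+3=6$.

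\textbf{Uncrossing at a single vertex.} Apply the inequality to $X$ and to $Y$ or its complement. Together with the symmetry and submodularity of $\rho_G$, this produces a set $Z\subseteq V(G)$, obtained as an intersection or union of the witnesses possibly with $v$ added, whose cut-rank in $G$ is small. Primeness of $G$ and $3^{+2}$-rank-connectivity say that every set of cut-rank at most $2$ in $G$ has a side with at most $4$ vertices, and every set of cut-rank at most $1$ has a side with at most $1$ vertex. So each uncrossed set must be tiny on one side. From this I expect to conclude that $X$ or $Y$ is itself almost a small set in $G$: it is a set of size $5$ or $6$ that becomes a set of cut-rank $2$ only after $v$ is attached or removed. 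I will first dispose of the split case. Since $G$ is prime and $n\ge 10$, the counting here is easy, and it shows that $G\setminus v$ and $G/v$ cannot both have splits. So one reduction is prime at every vertex.

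\textbf{Choosing $v$ (the expected main obstacle).} The hard step is to choose $v$ so that the surviving prime reduction is also $3^{+3}$-rank-connected; a badly chosen $v$ can leave a $3$-separation with sides of size $5$. I would try an extremal choice. Among all sets $A$ with $\rho_G(A)\le 2$ and $|A|=4$ (the "near-tight" sets allowed by the $3^{+2}$ hypothesis), take one that is maximal under some order, or conclude that none exists. Then choose $v$ outside every such $A$ if possible, and otherwise inside a minimal one. Next, show that a failure of $3^{+3}$-rank-connectivity at $v$ yields a $5$- or $6$-element set $B$ with $\rho_G(B\cup\{v\})\le 2$ or $\rho_G(B)\le 2$. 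Uncrossing $B$ with $A$ via submodularity would then contradict either the extremality of $A$ or the $3^{+2}$ condition. The hypothesis $n\ge 10$ enters exactly here, guaranteeing that the complementary sides stay large ($\ge 4$) so the submodularity bounds bite. If the extremal argument leaves residual configurations, I would resolve them by a counting argument: each bad vertex forces a distinct small tight set, and there are too few such sets to cover all $n\ge 10$ vertices.
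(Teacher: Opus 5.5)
There is a genuine gap, and it starts with the split case. You claim that for $n\geq 10$ the graphs $G\setminus v$ and $G/v$ cannot both have splits. This is false, even with the $3^{+2}$ hypothesis. Take a prime graph $H$ on at least eight vertices in which every set of cut-rank at most two has a side with at most two vertices (random graphs on many vertices have this property with high probability). Pick distinct $a,b\in V(H)$ and add new vertices $c,v$ with edges $ac$, $cv$, $vb$. Since $\rho_H(X\cap V(H))\leq\rho_G(X)$, every set of cut-rank at most two in $G$ has a side with at most four vertices. A direct check, using that $H$ is connected with no pendant vertices, shows that $G$ is prime. Yet $c$ is a pendant vertex of $G\setminus v$. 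In $G\wedge vc$ the vertex $c$ has neighborhood $\{v,b\}$, so $c$ is also a pendant vertex of $G/v$. This is exactly the configuration $X=\{a,c\}$, $Y=\{b,c\}$, $\rho_G(\{a,b,c,v\})=2$ that your uncrossing inequality permits.

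So primeness is not automatic at every vertex, and $v$ must be chosen to secure primeness and $3^{+3}$-rank-connectivity at the same time. Primeness alone does follow from \zcref{thm:allys-chain}, because rank-connectivity is pivot-invariant and $C_n$ with $n\geq10$ is not $3^{+2}$-rank-connected. Since every graph on at most eleven vertices is trivially $3^{+3}$-rank-connected, this settles $n\leq12$. But Allys's theorem does not tell you which vertex works, so it cannot be combined with a separate choice of $v$ for $n\geq13$.

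The $3^{+3}$ step also fails as described. Suppose $G\setminus v$ (or $G/v$) is not $3^{+3}$-rank-connected. Its witness $X$ then satisfies $\abs{X}\geq6$ and $\abs{V(G)\setminus(X\cup\{v\})}\geq6$, so the $3^{+2}$ hypothesis forces $\rho_G(X)=\rho_G(X\cup\{v\})=3$. The obstruction is therefore a set of cut-rank exactly three in $G$ with at least six vertices on each side. There is no five- or six-element set $B$ with $\rho_G(B)\leq2$ or $\rho_G(B\cup\{v\})\leq2$ to uncross with your four-element set $A$.

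The basic inequality (which is the right tool) needs witnesses for both reductions at the same $v$. Even then it only gives $\rho_G(X\cap Y)+\rho_G(X\cup Y\cup\{v\})\leq5$ and $\rho_G(X\setminus Y)+\rho_G(Y\setminus X)\leq5$. These force one corner of each opposite pair to have at most four vertices, which is compatible with $\abs{X},\abs{Y}\geq6$.

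Beyond this, the extremal choice is never specified: the order is left open, and all your sets $A$ have exactly four vertices. You also give no reason why four-element sets of cut-rank two should be too few to cover $V(G)$. Selecting $v$ is the entire content of the theorem, which the paper does not reprove but imports from \cite{Oum2020}. As written, your proposal has the correct starting inequality but no argument for its main step.
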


\begin{proposition}[Oum {\cite[Proposition~7.7]{Oum2020}}]
\label{prop:excluded-not-three-plus-one}
If $G$ has rank-width three, every proper pivot-minor of~$G$ has rank-width at most two,
and $G$ is $3^{+1}$-rank-connected, then $|V(G)|\leq14$.
\end{proposition}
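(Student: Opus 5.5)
The plan is to argue by contradiction. Suppose $G$ satisfies the hypotheses and $\abs{V(G)}\geq 15$. The first goal is to produce a set $Z$ with $\rho_G(Z)\leq 2$ and $\min\{\abs{Z},\abs{V(G)\setminus Z}\}\geq 4$. Such a set contradicts the assumption that $G$ is $3^{+1}$-rank-connected.

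First I fix a vertex $v$. Since $G\setminus v$ and $G/v$ are proper pivot-minors, both have rank-width at most two. So each has a rank-decomposition of width at most two on the $n-1\geq 14$ vertices of $V(G)\setminus\{v\}$. A subcubic tree has a balanced edge: one displaying a bipartition $(X,X')$ of $V(G)\setminus\{v\}$ with both sides of size at least $\lceil (n-1)/3\rceil\geq 5$. This gives
\[
\rho_{G\setminus v}(X)\leq 2 \quad\text{and}\quad \rho_{G/v}(Y)\leq 2
\]
for balanced bipartitions $(X,X')$ and $(Y,Y')$ coming from the two decompositions.

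The key algebraic input is the Bixby-type inequality for cut-rank from \cite{Oum2004}. For disjoint $X,Y\subseteq V(G)\setminus\{v\}$ with union $V(G)\setminus\{v\}$ it reads
\[
\rho_{G\setminus v}(X)+\rho_{G/v}(Y)\geq \rho_G(X\cap Y)+\rho_G(X\cup Y\cup\{v\})-1,
\]
and I will use it in its general form for arbitrary $X,Y\subseteq V(G)\setminus\{v\}$, also applying it to $X$ or $X'$ paired with $Y$ or $Y'$. The left-hand side is at most $4$. Hence one of $\rho_G(X\cap Y)$ and $\rho_G(X\cup Y\cup\{v\})=\rho_G(X'\cap Y')$ is at most $2$. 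By symmetry of $\rho_G$, we may take $Z$ to be $X\cap Y$ or its complement.

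It then remains to choose the sides so that both $X\cap Y$ and $X'\cap Y'$ have at least four vertices. The same holds with $(X,X')$ or $(Y,Y')$ swapped, since the inequality applies to every such pairing. I expect this combinatorial step to be the main obstacle. A single balanced cut from each tree need not give four large quadrants: if $X\cap Y$ is tiny, the pairing $(X,Y')$ must be used instead, and one needs both $X\cap Y'$ and $X'\cap Y$ large.

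My first attempt is a counting argument. The four quadrants partition $n-1\geq 14$ vertices, and each of $X,X',Y,Y'$ has at least five vertices. Suppose both diagonal pairings fail, meaning some quadrant in each opposite pair has at most three vertices. Then I will show that some side of size at least five is nearly contained in a single quadrant. In that case I replace it by a cut along a different edge of the same decomposition tree. A subcubic tree offers a whole path of edges whose small side grows by at most three per step, so the small side can be tuned to any intermediate size.

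If pure counting is insufficient, the fallback is to apply \zcref{lem:titanic-partition} to $\rho_G$. The $3^{+1}$-connectivity together with primeness (\zcref{prop:excluded-connectivity}) should make every set of size at most three titanic. That would let me pass to a coarser partition, in which the required balanced, well-intersecting cuts can be found. The resulting set $Z$ with $\rho_G(Z)\leq2$ and both sides of size at least $4$ contradicts $3^{+1}$-rank-connectivity, so $\abs{V(G)}\leq 14$.
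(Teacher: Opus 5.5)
The paper gives no proof of this proposition; it is imported from Oum's earlier work, so your argument has to stand on its own. It does not: the step you flag as the main obstacle is a genuine gap, and it cannot be closed by counting.

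After one use of the Bixby-type inequality at $v$ and the $3^{+1}$ hypothesis, the only information you keep is the following condition. For every side $X$ of $T_1$ (the width-two tree of $G\setminus v$) and every side $Y$ of $T_2$ (that of $G/v$), $\min(\abs{X\cap Y},\abs{X'\cap Y'})\leq 3$, where primes denote complements in $V(G)\setminus\{v\}$. Your plan needs this to be impossible for $m=n-1\geq14$ common leaves, and it is not.

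Let $T_1$ have a node whose three branches $A_1,A_2,A_3$ have $5,5,4$ leaves, split at their roots as $3+2$, $3+2$, $2+2$. Build $T_2$ the same way on branches $B_1,B_2,B_3$ of sizes $5,5,4$, with $\abs{A_i\cap B_j}$ given by the rows $(2,2,1)$, $(2,2,1)$, $(1,1,2)$.

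An edge one of whose sides has at most three leaves never violates the condition, so only $A_i$, $B_j$ and their complements matter. For these, $\abs{A_i\cap B_j}\leq2$, and $\min(\abs{A_i\setminus B_j},\abs{B_j\setminus A_i})\leq3$ in all nine cases. Hence for every pair of edges both diagonals contain a quadrant with at most three vertices. No choice of edges, balanced or not, produces your set $Z$ when $\abs{V(G)}=15$. Branch sizes $6,5,4$ with rows $(3,2,1)$, $(2,2,1)$, $(1,1,2)$ give the same for $15$ leaves, so this counting cannot exclude $\abs{V(G)}=16$ either.

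The example also refutes your tuning claim. A step along a path adds an entire hanging branch, and here the only sides with at least four leaves on both sides have sizes $4,5,9,10$.

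Your fallback fails too. Since $G$ is prime with at least five vertices, every pair $\{a,b\}$ has $\rho_G(\{a,b\})=2$, since otherwise it would be one side of a split. But $\rho_G(\{a\}),\rho_G(\{b\})\leq1$, so the partition $(\{a\},\{b\},\varnothing)$ shows that no two-element set is titanic. Partitions into singletons do the same for three-element sets, so \zcref{lem:titanic-partition} yields nothing here.

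A proof must therefore exploit more than the sizes of the forced quadrants. Candidates include the exact cut-rank values forced in tight configurations, combined with submodularity; in the example these are $\rho_{G\setminus v}(A_1)=\rho_{G/v}(B_1\cup B_2)=2$ and $\rho_G(A_1\cap(B_1\cup B_2))=3$. Other candidates are several choices of $v$, or the hypothesis $\rw(G)=3$ used directly. Your proposal supplies none of these ingredients.

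A minor point: the inequality as you first write it, for disjoint $X,Y$ covering $V(G)\setminus\{v\}$, is vacuous. You need the version for arbitrary $X,Y$, which is what you then use.
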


\section{The prime-frontier search}\label{sec:search}

For $n\geq 5$, let $\cF_n$ contain one representative of each local-equivalence and
isomorphism class of prime $n$-vertex graphs of rank-width at most two.  Let $\cO_n$
contain one representative of each excluded-vertex-minor class on $n$ vertices.  The initial frontier
$\cF_5$ consists of the unique class represented by the 5-cycle.  Indeed, Bouchet
\cite[Lemma~3.1]{Bouchet1987b} proved that every prime graph on five vertices is locally
equivalent to $C_5$, and a direct rank-decomposition gives $\rw(C_5)=2$.

\subsection{Generating one-vertex extensions}

For each $H\in\cF_n$, the program considers all neighborhoods $N\subseteq V(H)$ with
$|N|\geq 2$ and adds a new vertex $x$ with $N(x)=N$.  It discards the extension when
$x$ is a twin of an old vertex.  These elementary conditions characterize the prime
extensions of a prime parent.

\begin{lemma}[Geelen {\cite[Lemma~5.3]{Geelen1995}}]\label{lem:prime-extension}
Let $H$ be a prime graph with at least five vertices, and let $G$ be obtained from $H$
by adding a vertex $x$.  Then $G$ is prime if and only if $d_G(x)\geq 2$ and $x$ has no
twin in $G$.
\end{lemma}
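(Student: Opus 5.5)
We prove the two implications of \zcref{lem:prime-extension} separately, using only the definition of a split.

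For necessity, suppose $G$ is prime. If $d_G(x)\le 1$, then $(\{x\}\cup N_G(x)\cup\{y\},\,\cdot\,)$ is a split for a suitable choice, or $G$ is disconnected. Concretely, if $d_G(x)=0$ then $G$ is disconnected. If $N_G(x)=\{u\}$, take $X=\{x,u\}$. The complement has $|V(H)|-1\ge 4$ vertices. The row of $x$ in $A(G)[X,V\setminus X]$ is zero, so $\rho_G(X)\le 1$, a split. If $x$ has a twin $y$ (true or false twin), take $X=\{x,y\}$. The rows of $x$ and $y$ restricted to $V\setminus X$ coincide, so $\rho_G(X)\le 1$. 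Again the complement has size at least $4$, giving a split. Hence primeness forces $d_G(x)\ge2$ and no twin.

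For sufficiency, assume $d_G(x)\ge 2$ and $x$ has no twin. Connectivity is immediate, since $H$ is connected and $x$ has a neighbor. Suppose $(X,Y)$ is a split of $G$ with $x\in X$. Then $\rho_H(X\setminus\{x\})\le\rho_G(X)\le1$, because deleting a row cannot raise rank. Since $H$ is prime and $|Y|\ge2$, we must have $|X\setminus\{x\}|\le1$. As $|X|\ge2$, this gives $X=\{x,y\}$ for a single old vertex $y$.

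It then remains to show that $\rho_G(\{x,y\})\ge2$. The rank of the two rows of $x$ and $y$ on $Y=V(H)\setminus\{y\}$ is at most one in exactly three cases:
\begin{enumerate}[label=(\roman*)]
\item the row of $x$ is zero on $Y$, so $N_G(x)\subseteq\{y\}$, contradicting $d_G(x)\ge2$;
\item the row of $y$ is zero on $Y$, so $y$ is isolated or pendant to $x$ in $G$, making $y$ isolated in $H$ and contradicting that $H$ is connected on at least five vertices;
\item the two rows are equal and nonzero, which says exactly that $x$ and $y$ are twins in $G$, excluded by hypothesis.
\end{enumerate}
Hence no split exists and $G$ is prime.

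The main care point is the reduction step. One must check that $\rho_H(X\setminus\{x\})\le 1$ together with the primeness of $H$ forces $|X\setminus\{x\}|\le1$. This needs $|Y|\ge2$ and, when $|X\setminus\{x\}|\ge2$, that $(X\setminus\{x\},Y)$ is a genuine bipartition of $V(H)$ with both sides of size at least two. That is immediate here. The hypothesis $|V(H)|\ge5$ guarantees that the complements used in the necessity direction have size at least two.
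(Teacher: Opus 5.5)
Your proof is correct and complete. The paper gives no argument of its own for this lemma; it simply imports it from Geelen's thesis. So your write-up supplies a self-contained proof where the paper relies on a citation.

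Your route is the natural elementary one:
\begin{itemize}
\item A split $(X,Y)$ of $G$ with $x\in X$ restricts to the bipartition $(X\setminus\{x\},Y)$ of $V(H)$ with $\rho_H(X\setminus\{x\})\le\rho_G(X)\le1$.
\item Primeness of $H$ then forces $X=\{x,y\}$.
\item A two-row binary matrix has rank at most one exactly when a row vanishes or the two rows coincide.
\item These three cases translate precisely into $d_G(x)\le1$, $y$ isolated in $H$, or $x,y$ twins.
\end{itemize}

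The argument also shows how little of the size hypothesis is used. Sufficiency uses only that $H$ is prime. Necessity uses only $|V(H)|\ge3$, so that the complement of $\{x,u\}$ or $\{x,y\}$ has at least two vertices. The lemma genuinely fails for $|V(H)|\le2$: for example, $H=K_2$ with a pendant $x$ gives $P_3$, which is prime under the paper's definition.

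One cosmetic point: delete the first sentence of your necessity paragraph (the claim that ``$(\{x\}\cup N_G(x)\cup\{y\},\cdot)$ is a split for a suitable choice''). It is garbled, and the concrete case analysis that follows already does all the work.
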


The program then classifies each generated graph.  Rank-width-two children enter the
next frontier; rank-width-greater-than-two children are tested for minimality and are
never extended.  Duplicates under ordinary graph isomorphism are removed before the more expensive
local-equivalence classification of \zcref{sec:lc-key}.  We use the nauty library \cite{MP2014} to obtain the canonical labeling of a graph invariant under graph isomorphism.

\begin{algorithm}[H]
\caption{One prime-frontier layer from $n$ to $n+1$ vertices}
\label{alg:layer}
\begin{algorithmic}[1]
\Require One representative of every class in $\cF_n$
\Ensure Representatives of $\cF_{n+1}$ and $\cO_{n+1}$
\State $R\gets\varnothing$; $B\gets\varnothing$
\ForAll{$H\in\cF_n$}
  \ForAll{$N\subseteq V(H)$ with $|N|\geq 2$}
    \State form $G$ by adding $x$ with $N_G(x)=N$
    \If{$x$ has no twin in $G$}
\State replace $G$ by its ordinary isomorphism-canonical form
    \If{$\rw(G)\leq2$}
      \State insert $G$ into $R$
    \ElsIf{all three elementary reductions at every vertex have rank-width $\leq2$}
      \State insert $G$ into $B$
    \EndIf
    \EndIf
  \EndFor
\EndFor
\State $\cF_{n+1}\gets$ one representative of each local-equivalence and isomorphism class in $R$
\State $\cO_{n+1}\gets$ one representative of each local-equivalence and isomorphism class in $B$
\end{algorithmic}
\end{algorithm}

\subsection{Exact rank-width and minimality tests}\label{sec:rw-oracle}

It is known that for each fixed integer $k$, one can decide whether the rank-width of an
$n$-vertex graph is at most $k$ in time $O_k(n^2)$; see Fomin and
Korhonen~\cite[Corollary~1.3]{FK2024}.
Implementing such algorithms faithfully is complicated, and the search needs billions of
rank-width decisions.  This subsection describes the practical routine used for every
rank-width decision in this paper and proves its correctness.  Although the search
extends prime graphs, the routine is applied to the one-vertex reductions
$G\setminus v$, $G*v\setminus v$, and $G/v$, which need not be prime; the proof below
therefore assumes nothing about the input beyond connectivity, which is free because
rank-width is the maximum over connected components.  We first prove the lemmas on
which correctness rests, then present the routine as pseudocode, and finally assemble
the proof.

\paragraph{Partitioned decompositions.}
For a graph $G$ and a set $\mathcal P$ of pairwise disjoint nonempty subsets of $V(G)$,
a \emph{rank-decomposition of $(G,\mathcal P)$} is a pair $(T,\mu)$ of a subcubic tree
$T$ and a bijection $\mu$ from~$\mathcal P$ to the set of leaves of $T$.  For a subfamily
$\mathcal X\subseteq\mathcal P$ we write $\bigcup\mathcal X=\bigcup_{A\in\mathcal X}A$
and
\[
  \rho_G^{\mathcal P}(\mathcal X)
  =\rho_{G[\bigcup\mathcal P]}\Bigl(\bigcup\mathcal X\Bigr).
\]
Every edge of $T$ displays a bipartition $(\mathcal X,\mathcal P\setminus\mathcal X)$ of
the leaves, whose width is $\rho_G^{\mathcal P}(\mathcal X)$.  The width of $(T,\mu)$ is
the maximum width of its edges, and the \emph{rank-width of $(G,\mathcal P)$} is the
minimum width of a rank-decomposition of $(G,\mathcal P)$; as usual, it is $0$ when
$\abs{\mathcal P}\leq1$.  When $\mathcal P$ consists of singletons, the rank-width of
$(G,\mathcal P)$ equals the rank-width of $G[\bigcup\mathcal P]$.

For an integer $k$, a subfamily $\mathcal X\subseteq\mathcal P$ is \emph{$k$-branched}
if $\rho_G^{\mathcal P}(\mathcal X)\leq k$ and either $\abs{\mathcal X}\leq1$ or there
is a proper nonempty subfamily $\mathcal Y$ of $\mathcal X$ such that both $\mathcal Y$
and $\mathcal X\setminus\mathcal Y$ are $k$-branched.  Equivalently, $\mathcal X$ is
$k$-branched when it carries a rooted subcubic tree each of whose displayed subfamilies,
including~$\mathcal X$ itself, has $\rho_G^{\mathcal P}$ at most~$k$.  In particular the
rank-width of $(G,\mathcal P)$ is at most $k$ if and only if $\mathcal P$ is
$k$-branched.

\paragraph{The merge lemmas.}
Throughout this part, $C$ is a connected graph and $\mathcal P$ is a partition of
$V(C)$; we abbreviate $f=\rho_C^{\mathcal P}$ and write $\rho=\rho_C$.  A class
$A\in\mathcal P$ is an \emph{$S$-class} if $\rho(A)=1$ and a \emph{$D$-class} if
$\rho(A)=2$; the \emph{weight} of a subfamily is the sum of the cut-ranks of its
classes.  We call $\mathcal P$ \emph{clean} if every class is an $S$-class or a
$D$-class, the union of any two $S$-classes has cut-rank exactly two, and the union of
any other pair of classes has cut-rank at least three; cleanness is exactly what the
pair-reduction rule below enforces.  A \emph{low triple} is a family of three
$S$-classes whose union has cut-rank at most two.  The five \emph{merge patterns} are
the subfamilies $\mathcal Q\subseteq\mathcal P$ of the following shapes whose union has
cut-rank at most two; here $T,T'$ denote disjoint low triples.  In the connector column,
$X+Y$ means joining the roots of the rooted connectors for $X$ and $Y$ to a new root,
and parentheses specify the order of these joins.
\begin{center}
\begin{tabular}{@{}clll@{}}
\toprule
row & classes of $\mathcal Q$ & weight & connector \\
\midrule
1 & $D,S,S$ & $4$ & $D+(S+S)$ \\
2 & $S,S,S,S$ & $4$ & $(S+S)+(S+S)$ \\
3 & $T\cup\{D\}$ & $5$ & $((S+S)+S)+D$ \\
4 & $T\cup\{S,S\}$ & $5$ & $((S+S)+S)+(S+S)$ \\
5 & $T\cup T'$ & $6$ & $((S+S)+S)+((S+S)+S)$ \\
\bottomrule
\end{tabular}
\end{center}
Every pattern is $2$-branched: the connector in the last column is a rooted tree whose
displayed subfamilies are single classes, pairs of $S$-classes (of cut-rank at most two
by subadditivity), low triples, or the pattern itself.

The engine of the correctness proof is the following consequence of
\zcref{lem:titanic-partition}: it tells us when gluing a subfamily into a single class
is harmless.

\begin{lemma}[Titanic set]\label{lem:titanic-set}
Let $k\geq0$, let $G$ be a graph, and let $\mathcal P$ be a set of pairwise disjoint
nonempty subsets of $V(G)$.  Let $\mathcal Q\subseteq\mathcal P$ be a $k$-branched
subfamily such that for every partition $(\mathcal Q_1,\mathcal Q_2,\mathcal Q_3)$ of
$\mathcal Q$ into three, possibly empty, subfamilies, some $i\in\{1,2,3\}$ satisfies
$\rho_G^{\mathcal P}(\mathcal Q_i)\geq\rho_G^{\mathcal P}(\mathcal Q)$.  Then the
rank-width of $(G,\mathcal P)$ is at most $k$ if and only if the rank-width of
$\bigl(G,(\mathcal P\setminus\mathcal Q)\cup\{\bigcup\mathcal Q\}\bigr)$ is at most~$k$.
\end{lemma}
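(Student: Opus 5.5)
We prove the two directions separately. Write $\mathcal P'=(\mathcal P\setminus\mathcal Q)\cup\{\bigcup\mathcal Q\}$.

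\emph{Merged bound implies original bound.} Suppose $(G,\mathcal P')$ has a rank-decomposition of width at most $k$. Since $\mathcal Q$ is $k$-branched, it carries a rooted subcubic tree $T_{\mathcal Q}$ whose leaves are the classes of $\mathcal Q$ and all of whose displayed subfamilies have $\rho_G^{\mathcal P}$ at most $k$. We replace the leaf for $\bigcup\mathcal Q$ by $T_{\mathcal Q}$, attaching its root at that leaf; if $\abs{\mathcal Q}\le1$ there is nothing to do. The edges of the old tree display the same vertex bipartitions as before, since $\bigcup\mathcal Q$ stays on one side. The new edges display subfamilies of $\mathcal Q$ that are bounded by $k$ by construction. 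Every $\rho^{\mathcal P}$ and $\rho^{\mathcal P'}$ value is computed in the same graph $G[\bigcup\mathcal P]$, so the widths agree. Hence $(G,\mathcal P)$ has width at most $k$.

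\emph{Original bound implies merged bound.} Here we invoke \zcref{lem:titanic-partition}. Let $V=\mathcal P$ and let $f=\rho_G^{\mathcal P}$, which is a symmetric submodular function on $2^{\mathcal P}$ because it is a cut-rank function pulled back along unions. By hypothesis $\bw(f)\le k$. Consider the partition $\mathcal R$ of $\mathcal P$ consisting of $\mathcal Q$ together with the singletons $\{A\}$ for $A\in\mathcal P\setminus\mathcal Q$. We check that $\mathcal R$ is titanic of width at most $k$.

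1. A singleton $\{A\}$ is titanic: in any partition of it into three parts, one part is $\{A\}$ itself, which attains $f(\{A\})$.
2. $\mathcal Q$ is titanic by exactly the stated hypothesis.
3. For the width, $f(\mathcal Q)\le k$ because $\mathcal Q$ is $k$-branched. For a singleton, $f(\{A\})$ is the width of the edge at leaf $A$ in any decomposition of width at most $k$, so it is at most $k$. If $\abs{\mathcal P}\le1$ the statement is trivial.

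The lemma then gives $\bw(f^{\mathcal R})\le k$. The function $f^{\mathcal R}$ on $2^{\mathcal R}$ is exactly $\rho_G^{\mathcal P'}$ under the bijection $\mathcal R\to\mathcal P'$, so $(G,\mathcal P')$ has rank-width at most $k$.

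The only delicate points are bookkeeping. One must confirm that the branch-width of $f$ as defined in \zcref{lem:titanic-partition} matches the rank-width of $(G,\mathcal P)$: both use subcubic trees with leaves in bijection with $\mathcal P$, with the same small-case convention. One must also handle the degenerate cases where $\mathcal Q$ is empty or a single class, where $\mathcal P'=\mathcal P$ up to relabeling. I expect no substantive obstacle beyond checking that symmetry and submodularity transfer to $f$, which is immediate from the corresponding properties of $\rho_{G[\bigcup\mathcal P]}$.
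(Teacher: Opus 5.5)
Your proof is correct and is essentially the paper's argument. One direction replaces the leaf $\bigcup\mathcal Q$ by the rooted tree witnessing that $\mathcal Q$ is $k$-branched, and the other applies the titanic-partition lemma of Hlin{\v e}n{\'y} and Oum to the partition of $\mathcal P$ into $\mathcal Q$ and singletons, checking titanicity and width in the same way the paper does. The only difference is bookkeeping: the paper disposes of $\abs{\mathcal P'}\leq 2$ first, in particular the case $\mathcal P=\mathcal Q$, where $(G,\mathcal P')$ has no genuine leaf to replace and one uses the $k$-branched tree of $\mathcal Q$ directly; your degenerate-case remarks cover this only implicitly.
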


\begin{proof}
Write $f=\rho_G^{\mathcal P}$ and
$\mathcal P'=(\mathcal P\setminus\mathcal Q)\cup\{\bigcup\mathcal Q\}$.  If
$\abs{\mathcal P'}=1$, then the rank-width of $(G,\mathcal P')$ is zero.  If
$\abs{\mathcal P'}=2$, then its unique cut has value
$f(\mathcal Q)\leq k$, because $\mathcal Q$ is $k$-branched.  In either case, attaching
the at most one class of $\mathcal P\setminus\mathcal Q$ to a rooted tree witnessing
that $\mathcal Q$ is $k$-branched bounds the rank-width of $(G,\mathcal P)$ by~$k$.
Thus assume $\abs{\mathcal P'}\geq3$.

Suppose the rank-width of $(G,\mathcal P')$ is at most $k$.  Since $\mathcal Q$ is
$k$-branched, it carries a rooted subcubic tree whose displayed subfamilies have $f$ at
most $k$.  Replacing the leaf $\bigcup\mathcal Q$ of a witnessing decomposition of
$(G,\mathcal P')$ by that rooted tree gives a rank-decomposition of $(G,\mathcal P)$;
each new edge displays a subfamily of $\mathcal Q$ (of width $\leq k$) and each old edge
keeps its width.

Conversely, suppose the rank-width of $(G,\mathcal P)$ is at most $k$.  The function
$f$ is a symmetric submodular function on the finite ground set $\mathcal P$, and a
rank-decomposition of $(G,\mathcal P)$ of width at most $k$ is exactly a
branch-decomposition of $f$ of width at most $k$; moreover its leaf edges display the
singletons, so $f(\{A\})\leq k$ for every $A\in\mathcal P$.  Consider the partition of
the ground set $\mathcal P$ into $\mathcal Q$ and the singletons $\{A\}$ for
$A\in\mathcal P\setminus\mathcal Q$.  Each singleton is trivially titanic, and
$\mathcal Q$ is titanic by hypothesis; the width of this partition is at most
$\max\bigl(f(\mathcal Q),\max_A f(\{A\})\bigr)\leq k$ because $\mathcal Q$ is
$k$-branched.  By \zcref{lem:titanic-partition}, the quotient function has branch-width
at most $k$, and that quotient function is precisely~$\rho_G^{\mathcal P'}$.
\end{proof}

\begin{lemma}[Pair merges]\label{lem:pair-merge}
Let $C$ be a connected graph and let $\mathcal P$ be a partition of $V(C)$ into classes
of cut-rank at most two.  Let $A,B\in\mathcal P$ be distinct and suppose that either
\begin{enumerate}[label=\rm(\alph*)]
\item $\rho(A\cup B)\leq1$, or
\item $\max\bigl(\rho(A),\rho(B)\bigr)=2$ and $\rho(A\cup B)\leq2$.
\end{enumerate}
Then $\mathcal Q=\{A,B\}$ satisfies the hypothesis of \zcref{lem:titanic-set} with
$k=2$.
\end{lemma}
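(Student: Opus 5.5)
Two things need checking for $\mathcal Q=\{A,B\}$ with $k=2$: that $\mathcal Q$ is $2$-branched, and that every partition $(\mathcal Q_1,\mathcal Q_2,\mathcal Q_3)$ of $\mathcal Q$ into three possibly empty subfamilies has a part with $f(\mathcal Q_i)\geq f(\mathcal Q)=\rho(A\cup B)$. Here $f=\rho_C^{\mathcal P}$, so $f(\mathcal X)=\rho_C(\bigcup\mathcal X)$ because $\bigcup\mathcal P=V(C)$.

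Branchedness is immediate. The singletons $\{A\}$ and $\{B\}$ are $2$-branched because every class has cut-rank at most two. The pair is $2$-branched because $\rho(A\cup B)\leq2$ in both cases, which is assumed in (b) and follows from $\leq1$ in (a). The connector is just $A+B$.

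For the titanic condition, a partition of a two-element family into three parts either puts both classes into one part, or separates them, with the third part empty. In the first case that part is $\mathcal Q$ itself, so the inequality holds with equality. So the only case is $\{\{A\},\{B\},\varnothing\}$, and we need $\max(\rho(A),\rho(B))\geq\rho(A\cup B)$; the empty part has $f=0$.

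In case (b) this is immediate, since the maximum is $2$ and $\rho(A\cup B)\leq2$. Case (a) is where connectivity of $C$ is used, and this is the only real point of the proof. If $\rho(A\cup B)=0$, there is nothing to show. If $\rho(A\cup B)=1$, we need $\rho(A)\geq1$ or $\rho(B)\geq1$. Suppose instead $\rho(A)=\rho(B)=0$. Then $A$ is a nonempty set with no edges to its complement, so connectivity of $C$ forces $A=V(C)$. But then $B$, being disjoint from $A$ and nonempty, cannot exist, a contradiction. (Equivalently, $A\cup B$ would be a nonempty union of components, hence all of $V(C)$, giving $\rho(A\cup B)=0\neq1$.) Hence $\max(\rho(A),\rho(B))\geq1=\rho(A\cup B)$, and \zcref{lem:titanic-set} applies.
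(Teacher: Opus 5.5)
Your proposal is correct and follows the paper's proof essentially step for step. You check $2$-branchedness, reduce the titanic condition to the partition separating $A$ from $B$, and then use connectivity of $C$ in case (a) (the paper directly notes $\rho(A)\geq1$ since $A$ is a nonempty proper subset) and the cut-rank-two class in case (b).
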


\begin{proof}
The family $\mathcal Q$ is $2$-branched because $\rho(A\cup B)\leq2$ and its classes
have cut-rank at most two.  A partition of $\mathcal Q$ into three subfamilies either
has $\mathcal Q$ itself as a part, which trivially satisfies the condition, or
separates $A$ from $B$.  In case (a), $f(\{A\})=\rho(A)\geq1\geq\rho(A\cup B)$ because
$C$ is connected and $A$ is a nonempty proper subset of $V(C)$.  In case (b), the
class of cut-rank two satisfies $f\geq2\geq\rho(A\cup B)$.
\end{proof}

For the five patterns we also need a lower bound on the cut-rank of the union of three
$S$-classes.  The following lemma provides it in \emph{every} graph; this is the step
where a primeness assumption would traditionally be invoked.

\begin{lemma}[Rigidity]\label{lem:rigidity}
Let $X_1,X_2,X_3$ be pairwise disjoint nonempty vertex sets in a graph $H$ with
$\rho_H(X_i)\leq1$ for all $i$.  If $\rho_H(X_i\cup X_j)\geq2$ whenever $i\neq j$, then
$\rho_H(X_1\cup X_2\cup X_3)\geq2$.
\end{lemma}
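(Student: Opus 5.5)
The plan is to argue by contradiction, using the fact that a cut of rank one has a very rigid shape over $\Ftwo$ and then finishing with a small finite case check. Write $U=X_1\cup X_2\cup X_3$ and $Y=V(H)\setminus U$, and suppose $\rho_H(U)\leq1$.

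\emph{Normalization.} Submodularity applied to disjoint sets (with $\rho_H(\emptyset)=0$) gives subadditivity, so $2\leq\rho_H(X_i\cup X_j)\leq\rho_H(X_i)+\rho_H(X_j)$. Hence $\rho_H(X_i)=1$ for every $i$. A rank-one $0/1$ matrix over $\Ftwo$ is an all-ones block. Thus for each $i$ there are nonempty sets $A_i\subseteq X_i$ and $B_i\subseteq V(H)\setminus X_i$ such that the edges leaving $X_i$ are exactly the pairs in $A_i\times B_i$.

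For distinct $i,k$, the edges between $X_i$ and $X_k$ are $A_i\times(B_i\cap X_k)$, and also $(B_k\cap X_i)\times A_k$. So either there are no such edges, or $B_i\cap X_k=A_k$ and $B_k\cap X_i=A_i$. Record this by a bit $e_{ik}=e_{ki}$. Similarly let $y_i\in\{0,1\}$ indicate whether $B_i\cap Y\neq\emptyset$.

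The rows of $A(H)[U,Y]$ are the indicator vectors of the sets $B_i\cap Y$, repeated. The assumption $\rho_H(U)\leq1$ therefore says that all nonempty $B_i\cap Y$ coincide with one common set $\beta\subseteq Y$.

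\emph{Translating the pair conditions.} Fix $\{i,j,k\}=\{1,2,3\}$. The row space of $A(H)[X_i\cup X_j,\,V(H)\setminus(X_i\cup X_j)]$ is spanned by the indicator vectors of
\[
B_i\setminus X_j=(y_i\text{ copies of }\beta)\cup(e_{ik}\text{ copies of }A_k),\qquad
B_j\setminus X_i=(y_j\text{ copies of }\beta)\cup(e_{jk}\text{ copies of }A_k).
\]
Because $\beta\subseteq Y$ and $A_k\subseteq X_k$ are disjoint and nonempty, rank two holds exactly when the pairs $(y_i,e_{ik})$ and $(y_j,e_{jk})$ are both nonzero and distinct. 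This is the only step requiring care: one must check that the disjointness of $\beta$ and $A_k$ makes the translation an equivalence.

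\emph{Finite check.} It remains to show that no choice of bits $y_1,y_2,y_3,e_{12},e_{13},e_{23}$ satisfies all three pair conditions.

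If every $y_i=1$, the conditions become $e_{13}\neq e_{23}$, $e_{12}\neq e_{23}$ and $e_{12}\neq e_{13}$. That would require three pairwise distinct bits, which is impossible.

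Otherwise, say $y_1=0$. Nonzeroness for the pairs $\{1,2\}$ and $\{1,3\}$ forces $e_{13}=e_{12}=1$. The pair $\{2,3\}$ then compares $(y_2,1)$ with $(y_3,1)$, so $y_2\neq y_3$; by symmetry take $y_3=0$, $y_2=1$. Now $y_3=0$ forces $e_{32}=1$. But then the pair $\{1,3\}$ compares $(y_1,e_{12})=(0,1)$ with $(y_3,e_{32})=(0,1)$, so these pairs are equal, a contradiction.

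Hence $\rho_H(U)\geq2$. No primeness or connectivity of $H$ is used anywhere, as the statement requires.
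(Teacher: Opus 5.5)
Your proof is correct, and it takes a different route from the paper's.

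The paper argues by a minimal counterexample. It adjoins the complement $X_4=V(H)\setminus(X_1\cup X_2\cup X_3)$ as a fourth part, so that all four parts play symmetric roles: each has cut-rank one and every pairwise union has cut-rank at least two. Minimality then rules out vertices whose rows outside their part are zero or repeated, which shrinks every $X_k$ to a single vertex. The result is a prime graph on four vertices, and no such graph exists.

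You make this compression explicit, without induction. The rank-one blocks $A_i\times B_i$ and the common set $\beta$ are exactly the data of the four-vertex quotient on $A_1,A_2,A_3,\beta$. Your bits $e_{12},e_{13},e_{23},y_1,y_2,y_3$ are its six potential edges. Your criterion that $(y_i,e_{ik})$ and $(y_j,e_{jk})$ are nonzero and distinct says precisely that the $2+2$ cut separating $\{A_i,A_j\}$ from $\{A_k,\beta\}$ has rank two. Your finite bit check is therefore a hand verification of the step the paper dismisses with ``as one checks directly.''

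The paper's version is shorter and rests on a familiar fact about four-vertex graphs. Its symmetric treatment of $X_4$ also avoids your case split on the $y_i$. Yours is fully self-contained and needs neither the minimality argument nor the claim that those deletions leave every relevant cut-rank unchanged.

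One small presentational point: if all $y_i=0$, for instance when $Y=\varnothing$, then $\beta$ is undefined. Your rank criterion still holds in that case, since only $A_k$ appears in the rows, so nothing breaks.
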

\begin{proof}
  Suppose not. Let $H$ be a counterexample with the minimum number of vertices.
  Let $X_4=V(H)\setminus (X_1\cup X_2\cup X_3)$.
  If $X_4=\varnothing$, then
  $\rho_H(X_1\cup X_2)=\rho_H(X_3)\leq1$, a contradiction.  Thus $X_4$ is nonempty.
  Now $\rho_H(X_1\cup X_2\cup X_3)=\rho_H(X_4)\leq1$.  Moreover,
  $\rho_H(X_i\cup X_4)=\rho_H(X_j\cup X_k)\geq2$ whenever
  $\{i,j,k\}=\{1,2,3\}$, by symmetry of cut-rank.  Hence
  $\rho_H(X_i\cup X_j)\geq2$ for all distinct $i,j\in\{1,2,3,4\}$.
  By the submodularity, for all distinct $i,j\in\{1,2,3,4\}$, 
  $\rho_H(X_i)+\rho_H(X_j)\ge \rho_H(X_i\cup X_j)\ge 2$ and therefore 
  $\rho_H(X_i)=1$ for all $i\in\{1,2,3,4\}$.
  
  For $k\in\{1,2,3,4\}$, minimality implies that no vertex of $X_k$ has
  an empty neighborhood outside~$X_k$ and that no two distinct vertices of~$X_k$
  have the same neighborhood outside $X_k$: in either case, deleting one of the
  vertices would leave all the relevant cut-ranks unchanged.  Since the cut matrix
  between $X_k$ and $V(H)\setminus X_k$ has rank one, all its nonzero rows are equal.
  We deduce that $\abs{X_k}=1$.
  Then $H$ is a prime graph on four vertices, contradicting the fact that every
  connected graph on four vertices has a split, as one checks directly.
\end{proof}

\begin{lemma}[Safety]\label{lem:merge-safe}
Let $C$ be a connected graph and let $\mathcal P$ be a clean partition of $V(C)$.  Let
$\mathcal Q\subseteq\mathcal P$ match one of the five merge patterns; if $\mathcal Q$
matches row 4 or row 5, suppose additionally that no four $S$-classes of $\mathcal P$
have a union of cut-rank at most two.  Then $\mathcal Q$ satisfies the hypothesis of
\zcref{lem:titanic-set} with $k=2$.
\end{lemma}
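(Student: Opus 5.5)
The plan is to check the two hypotheses of \zcref{lem:titanic-set} separately. The first, that $\mathcal Q$ is $2$-branched, is already recorded after the table of merge patterns: the connector tree only displays single classes, pairs of $S$-classes, low triples and $\mathcal Q$ itself, all of cut-rank at most two. So the work is the titanic condition: for every partition $(\mathcal Q_1,\mathcal Q_2,\mathcal Q_3)$ of $\mathcal Q$, some part satisfies $f(\mathcal Q_i)\geq f(\mathcal Q)$.

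I first dispose of the easy cases. If some $\mathcal Q_i=\mathcal Q$, we are done. If $f(\mathcal Q)=0$, any part works. If $f(\mathcal Q)=1$, any nonempty part is a nonempty proper subset of $V(C)$ (it avoids $\bigcup\mathcal Q_j\neq\varnothing$ for some other nonempty part $j$), so it has cut-rank at least $1$ because $C$ is connected. Hence assume $f(\mathcal Q)=2$ and that all parts are proper subfamilies, and look for a part of cut-rank at least two.

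The central claim I will aim for is: \emph{every subfamily $\mathcal R\subsetneq\mathcal Q$ with $\abs{\mathcal R}\geq2$ satisfies $f(\mathcal R)\geq2$.} Granting it, the titanic condition follows. If some part has at least two classes, the claim applies to it. Otherwise every part is empty or a single class, so $\abs{\mathcal Q}\leq3$, and $\mathcal Q$ is row~1. Its $D$-class then forms a part by itself, with $f=2$.

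To prove the claim I argue by the shape of $\mathcal R$.
\begin{itemize}
\item Pairs: cleanness gives cut-rank exactly two for two $S$-classes, and at least three for any other pair.
\item Three $S$-classes: all their pairwise unions have cut-rank two, so \zcref{lem:rigidity} (applied in $H=C$) gives $f(\mathcal R)\geq2$.
\item Four $S$-classes inside $\mathcal Q$ (possible only in rows 4 and 5, since in row 2 they would be all of $\mathcal Q$): the extra hypothesis of the lemma gives cut-rank at least three.
\item Five $S$-classes: in row 5 the complement of $\mathcal R$ within $\mathcal Q$ is a single $S$-class $A$. Then $f(\mathcal R)=\rho\bigl((V\setminus\bigcup\mathcal Q)\cup A\bigr)$, and I would use submodularity with $\rho(\bigcup\mathcal Q)\leq2$ and $\rho(A)=1$ to exclude cut-rank at most one.
\end{itemize}
For general $\mathcal R$ the uniform tool is submodularity. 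Suppose $f(\mathcal R)\leq1$ and pick a pair $\{A,B\}\subseteq\mathcal R$ with large union cut-rank. Combine $f(\mathcal R)+f(\mathcal Q\setminus\mathcal R\cup\{A\})$ with $f(\mathcal Q)\leq2$ and the pair/triple bounds to force a contradiction. Alternatively, apply \zcref{lem:rigidity} to the three sets $\bigcup\mathcal R$, a class of $\mathcal Q\setminus\mathcal R$, and the remainder.

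The main obstacle is the mixed subfamilies containing the $D$-class together with $S$-classes (rows 1 and 3), and the large subfamilies in rows 4 and 5. There, cleanness only controls pairs and the hypothesis only controls four $S$-classes, so some submodular bookkeeping is needed. What I would try first is uniform. If $f(\mathcal R)\leq1$ for a proper $\mathcal R$ with $\abs{\mathcal R}\geq2$, choose $X_1=\bigcup\mathcal R$. Split $\mathcal Q\setminus\mathcal R$ into one class $X_2$ and the rest $X_3$, or take $X_3=V\setminus\bigcup\mathcal Q$. Then invoke \zcref{lem:rigidity} in contrapositive form: two of these sets must have a union of cut-rank at most one. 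Recurse on that union, and use the pair bounds from cleanness to terminate the recursion.
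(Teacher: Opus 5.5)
Your reduction to $f(\mathcal Q)=2$, your central claim, and your derivation of the titanic condition from it are all sound. So is your treatment of pairs, three $S$-classes, and four $S$-classes, which matches the paper. The gap is in the two cases you flag yourself, because the tools you propose for them do not work.

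\emph{Row 3.} Take $\mathcal R=\{A,B,B'\}$ with $A$ the $D$-class. Here $\mathcal Q\setminus\mathcal R$ is a single class, so the only available third set for \zcref{lem:rigidity} is $V(C)\setminus\bigcup\mathcal Q$. Its cut-rank is $f(\mathcal Q)=2$, so the lemma's hypothesis $\rho\leq1$ fails and your recursion never starts. The submodular combination you indicate is $f(\mathcal R)+f\bigl((\mathcal Q\setminus\mathcal R)\cup\{X\}\bigr)\geq f(\mathcal Q)+f(\{X\})$ with $X\in\mathcal R$. It yields only $f(\mathcal R)\geq 4-3=1$ for $X=A$, and $f(\mathcal R)\geq 3-2=1$ for $X\in\{B,B'\}$. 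The upper bound $f(\mathcal Q)\leq2$ cannot improve this. What is missing is the reverse bound $f(\mathcal X\cup\mathcal Y)\geq f(\mathcal X)-f(\mathcal Y)$ for disjoint subfamilies. It follows from submodularity applied to $\bigcup(\mathcal X\cup\mathcal Y)$ and $V(C)\setminus\bigcup\mathcal Y$, together with symmetry. Applied to your high-rank pair $\{A,B\}$, it gives $f(\{A,B,B'\})\geq f(\{A,B\})-f(\{B'\})\geq 3-1=2$ by cleanness. This is exactly how the paper handles the part containing the $D$-class in rows 1 and 3.

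\emph{Row 5.} Take a five-class subfamily $\mathcal R=\mathcal Q\setminus\{E\}$. The data you name, $f(\mathcal Q)=2$ and $\rho(E)=1$, give only $f(\mathcal R)\geq f(\mathcal Q)-f(\{E\})=1$, so they cannot exclude cut-rank one; another input is needed. The paper uses the extra hypothesis. If $f(\mathcal R)\leq1$, then for any $B\in\mathcal R$ subadditivity gives $f(\mathcal R\setminus\{B\})\leq f(\mathcal R)+f(\{B\})\leq2$. That is four $S$-classes whose union has cut-rank at most two, a contradiction. Alternatively, let $T'$ be the low triple containing $E$. The subfamilies $\mathcal R$ and $T'$ have union $\mathcal Q$ and intersection the pair $T'\setminus\{E\}$. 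Submodularity then gives $f(\mathcal R)\geq f(\mathcal Q)+f(T'\setminus\{E\})-f(T')\geq 2+2-2=2$, using cleanness for the pair; this route does not even need the extra hypothesis. With these two repairs your argument is complete.
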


\begin{proof}
Each pattern is $2$-branched, as observed after the table.  Write
$q=f(\mathcal Q)\leq2$ and consider a partition of $\mathcal Q$ into three subfamilies.
A subfamily equal to $\mathcal Q$ satisfies the condition trivially, and if $q\leq1$
any nonempty subfamily does, because its union is a nonempty proper subset of $V(C)$
and $C$ is connected.  So assume $q=2$; we must find a subfamily of value at least two.
We use, for disjoint subfamilies, both subadditivity
$f(\mathcal X\cup\mathcal Y)\leq f(\mathcal X)+f(\mathcal Y)$ and the reverse bound
$f(\mathcal X\cup\mathcal Y)\geq f(\mathcal X)-f(\mathcal Y)$; both follow from
submodularity and symmetry of the cut-rank function.

First suppose $\mathcal Q$ contains a $D$-class $A$ (rows 1 and 3), and let
$\mathcal A$ be the subfamily containing $A$.  If $\mathcal A=\{A\}$, then
$f(\mathcal A)=2$.  If $\mathcal A$ consists of $A$ and one further class, then
$f(\mathcal A)\geq3$ by cleanness.  If $\mathcal A$ consists of $A$ and two further
classes $B,B'$, then $f(\mathcal A)\geq f(\{A,B\})-f(\{B'\})\geq3-1=2$.  In every case
$f(\mathcal A)\geq q$.

Now suppose all classes of $\mathcal Q$ are $S$-classes (rows 2, 4, and 5, so
$\abs{\mathcal Q}\in\{4,5,6\}$).  Every partition of $\mathcal Q$ into three
subfamilies has a proper subfamily $\mathcal A$ with $\abs{\mathcal A}\geq2$.  If
$\abs{\mathcal A}=2$, then $f(\mathcal A)=2$ by cleanness.  If $\abs{\mathcal A}=3$,
then $f(\mathcal A)\geq2$ by \zcref{lem:rigidity}, whose hypotheses hold because
$S$-classes have cut-rank one and their pairwise unions have cut-rank two by cleanness.
If $\abs{\mathcal A}=4$ (possible only in rows 4 and 5), then $f(\mathcal A)\leq1$
would exhibit four $S$-classes with a union of cut-rank at most two, contradicting the
additional hypothesis; so $f(\mathcal A)\geq2$.  If $\abs{\mathcal A}=5$ (row 5 only),
then $f(\mathcal A)\leq1$ would give every four of its classes a union of cut-rank at
most $f(\mathcal A)+1\leq2$ by subadditivity, again contradicting the
additional hypothesis.  In every case some subfamily has value at least $q=2$.
\end{proof}

\begin{remark}\label{rem:not-every-merge}
The additional hypothesis of \zcref{lem:merge-safe} for rows 4 and 5 cannot be
dropped.  Let $W$ be a set of four vertices
inducing a path whose vertices all have the same nonempty neighborhood outside~$W$ (one
side of a split).  Then $\rho(W)=1$ while every pair and every triple inside $W$ has
cut-rank two, so for any vertex $x\notin W$ the five singletons of $W\cup\{x\}$ match
the shape of row 4 and $\rho(W\cup\{x\})\leq2$.  This family is not titanic: the
partition $(W,\{x\},\varnothing)$ has $\rho(W)=\rho(\{x\})=1$.  The danger is real: in
a systematic experiment on 13-vertex graphs of this kind, merging $W\cup\{x\}$
destroyed width-two decomposability in $933{,}552$ of $1{,}700{,}988$ generated
configurations.  The exhaustive comparison on all graphs with at most ten vertices found no
mismatch, but that observation alone could not establish safety: a bad merge can affect
the returned answer only if later reductions leave a stuck state with more than six
representatives.  The 13-vertex experiment supplies explicit false-negative states when
the unsafe merge is forced.  The algorithm never performs that merge: $W$
itself matches row 2; that merge
\emph{is} safe; and rows 4 and 5 are attempted only after row 2 is
exhausted---precisely the additional hypothesis of \zcref{lem:merge-safe}.
\end{remark}

Two more lemmas complete the toolkit: a stopping rule and a guarantee of progress.

\begin{lemma}[Base case]\label{lem:six-reps}
Let $C$ be a connected graph and let $\mathcal P$ be a partition of $V(C)$ into classes
of cut-rank at most two.  If the weight of $\mathcal P$ is at most six, then the
rank-width of $(C,\mathcal P)$ is at most two.
\end{lemma}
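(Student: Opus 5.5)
The plan is to construct an explicit rank-decomposition of $(C,\mathcal P)$ whose every edge displays, on one of its two sides, a subfamily of weight at most two. The bound then comes from subadditivity. For disjoint subfamilies $\mathcal X,\mathcal Y$ we have $f(\mathcal X\cup\mathcal Y)\leq f(\mathcal X)+f(\mathcal Y)$, by submodularity of $\rho$ together with $\rho(\varnothing)=0$. By induction, $f(\mathcal X)\leq w(\mathcal X)$, where $w$ denotes weight. Since $f$ is symmetric, an edge whose two sides are $\mathcal X$ and $\mathcal P\setminus\mathcal X$ has width at most $\min\bigl(w(\mathcal X),w(\mathcal P\setminus\mathcal X)\bigr)$. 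It therefore suffices to make one side of every edge have weight at most two.

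First I dispose of the degenerate case: if $\abs{\mathcal P}\leq1$, the rank-width is zero by convention. Otherwise every class $A$ is a nonempty proper subset of $V(C)$. Since $C$ is connected, $\rho(A)\geq1$, so every class is an $S$-class or a $D$-class. Let $a$ be the number of $D$-classes and $b$ the number of $S$-classes, so $2a+b\leq6$.

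Next I group the classes into \emph{bins} of weight at most two. Each $D$-class forms its own bin, and the $S$-classes are paired up, with possibly one left alone. The number of bins is $a+\lceil b/2\rceil$, and I claim it is at most three. If $b$ is even, then $a+b/2\leq3$ directly. If $b$ is odd, then $2a+b\leq5$ by parity, so $a+(b+1)/2\leq3$.

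Now build the tree.
\begin{itemize}
\item With three bins, take a central node joined to three branches. Each branch is a single leaf (one class) or an internal node carrying two leaves (two $S$-classes).
\item With two bins, join the two bin-roots by an edge.
\item With one bin, the tree is a single edge or a single leaf.
\end{itemize}
Every edge of this tree displays, on one side, a single class or an entire bin, and both have weight at most two. Hence every edge has width at most two, which proves the lemma.

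I expect no real obstacle here. The only points needing care are the parity count showing that at most three bins suffice, and checking that the tree remains subcubic in the degenerate configurations.
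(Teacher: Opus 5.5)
Your proof is correct and is essentially the paper's argument: put the classes into at most three groups of weight at most two (each $D$-class alone, $S$-classes paired), attach the groups to one central node, and bound every edge by subadditivity and symmetry of the cut-rank. The only cosmetic difference is that the paper first disposes of $\abs{\mathcal P}\leq2$ and then forces exactly three nonempty groups, whereas you pair $S$-classes greedily, justify $a+\lceil b/2\rceil\leq3$ by parity, and handle one, two, or three bins separately.
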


\begin{proof}
If $\abs{\mathcal P}\leq2$ the rank-width is at most two, so assume $\abs{\mathcal P}\geq3$;
then every class is a nonempty proper subset of $V(C)$ and has cut-rank one or two.
Partition $\mathcal P$ into three nonempty groups of weight at most two each: start with
three empty groups, place each $D$-class in a group of its own---there are at most
three---and distribute the $S$-classes among the remaining capacity, first making every
unused group nonempty.  This is possible because $|\mathcal P|\geq3$ and the total
weight is at most six.  Each group is a single class or a pair of
$S$-classes, so the cut-rank of its union is at most two by subadditivity.  Join the
three groups at one internal node of a tree, splitting each two-class group by one
further node.  Every displayed subfamily is a group, a single class, or the complement
of a group, hence has cut-rank at most two.
\end{proof}

\begin{lemma}[Side weight]\label{lem:side-weight}
Let $T$ be a subcubic tree whose leaves carry weights in $\{1,2\}$ with total weight at
least seven.  Then some edge of $T$ splits off a component whose leaf weights sum to
$4$, $5$, or~$6$.
\end{lemma}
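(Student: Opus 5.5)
We prove the Side weight lemma by a walk along the tree. Every edge $e$ of $T$ splits the leaves into two sides, and their weights sum to the total weight $W\geq7$. If some side has weight in $\{4,5,6\}$ we are done. So suppose no side does, and derive a contradiction. For an edge $e$ we will pick one side, oriented away from a chosen direction, and study how its weight changes as we move through the tree.

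Orient each edge $e$ towards the side of larger weight, breaking ties arbitrarily, and let $w(e)$ be the weight of the smaller side. Then $w(e)\leq W/2$. Leaf edges have $w(e)\in\{1,2\}$, because the leaf side carries a single weight in $\{1,2\}$ and the other side has weight at least $W-2\geq5$. Under our assumption every edge has $w(e)\leq3$, since values $4,5,6$ are excluded and every value above $6$ is impossible: $w(e)\geq7$ would force the larger side to have weight $\geq7$ too, and then one checks that some edge gives a side in $\{4,5,6\}$. The cleaner route is as follows. Start at a leaf edge and follow the orientation: at each internal node $v$, with incoming edges from subtrees of weights $a,b$ (or just $a$, if $v$ has degree $2$), the outgoing edge has small side $a+b$. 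Since orientation points to the heavier side, the walk cannot cycle in a tree, so it terminates at a node $v$ all of whose incident edges point into $v$. Each branch at $v$ then has weight at most $W/2$, and by assumption at most $3$.

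The key counting step concerns this sink node $v$. It has degree at most $3$, and each branch has weight at most $3$, so $W\leq9$. Moreover, each branch that is not a single leaf is itself split at its root edge into at most two sub-branches. The main obstacle is to exclude the small range $W\in\{7,8,9\}$ with all branches at most $3$. Here we use the fact that the complement of a branch is itself a side. For a degree-$3$ sink with branches of weights $b_1,b_2,b_3\leq3$ summing to $W\geq7$, the complement of branch $i$ has weight $W-b_i$. We need some $W-b_i\in\{4,5,6\}$. Since $W-b_i\geq W-3\geq4$, we have $W-b_i\leq6$ whenever $b_i\geq W-6$. For $W\leq9$ this only needs $b_i\geq3$, and $b_1+b_2+b_3\geq7$ forces some $b_i\geq3$. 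So that branch's edge has a side of weight in $\{4,5,6\}$, a contradiction. A sink of degree at most $2$ gives $W\leq6$, which is also a contradiction.

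Finally, we justify the bound at the sink without the shortcut. Each branch at $v$ is the lighter side of its edge, so it has weight at most $W/2$. It also lies outside $\{4,5,6\}$, and if it were at least $7$ its complement would be at least $7$, giving $W\geq14$. We handle that case by descending: move from $v$ into a branch $B$ of weight at least $7$. Inside $B$, take the child subtrees, each of weight at least $1$. Going down, weights decrease by splitting into at most two parts, and each part is at most $2\cdot(\text{part})$ of the previous. Weights drop from $\geq7$ to a leaf weight $\leq2$, and a quantity that at most halves-and-splits cannot skip from $\geq7$ to $\leq3$ in one step, because one of the two children carries at least half of at least $7$, which is at least $4$. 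So some subtree weight lands in $\{4,5,6\}$, completing the argument.
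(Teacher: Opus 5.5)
Your proof is correct, and its decisive step is exactly the paper's argument: follow the heavier child from a side of weight at least $7$, and stop at the first side of weight at most $6$, which must weigh at least $\lceil 7/2\rceil=4$. The paper phrases this extremally. Among sides of weight at least four (the complement of any leaf has weight at least $W-2\geq5$), it takes one of minimum weight. If that side weighed at least seven, the heavier branch at its root would be a lighter side of weight at least four.

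You reach this step only after orienting edges toward the heavier side, locating a sink, and splitting into two cases. In the first, all sink branches weigh at most $3$, and the complement of a weight-$3$ branch works. In the second, some branch weighs at least $7$, and you descend. That scaffolding is sound but unnecessary. Starting your descent from the complement of an arbitrary leaf already proves the lemma, so the orientation, the sink, and the bound $W\leq9$ can all be deleted.

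Your path formulation does have one small advantage. It passes through degree-$2$ nodes of the subcubic tree without comment, since the weight is unchanged there. The paper's minimum-weight choice tacitly assumes the root of $Y$ has two branches; choosing $Y$ inclusion-minimal would cover that case.

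For the write-up:
\begin{itemize}
\item Drop the unsupported ``one checks'' claim in your second paragraph, or point forward to your final paragraph where it is actually justified.
\item Replace the garbled sentence about ``$2\cdot(\text{part})$'' with an explicit statement: you always move to the heavier child, whose weight is at least half the current weight, and you stop at the first subtree of weight at most $6$, whose parent has weight at least $7$.
\end{itemize}
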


\begin{proof}
Call a component of $T\setminus e$, for an edge $e$ of $T$, a \emph{side}.  Sides of weight at
least four exist: the side complementary to any leaf has weight at least $7-2=5$.
Choose a side $Y$ of weight at least four with minimum weight and suppose its weight is
at least seven.  Then $Y$ is not a single leaf, so the end of its defining edge inside
$Y$ is an internal node, whose two branches are sides partitioning the weight of $Y$;
the heavier branch has weight at least $\lceil7/2\rceil=4$ and less than the weight of
$Y$, contradicting minimality.
\end{proof}

\begin{lemma}[Progress]\label{lem:merge-exists}
Let $C$ be a connected graph and let $\mathcal P$ be a clean partition of $V(C)$ of
weight at least seven.  If the rank-width of $(C,\mathcal P)$ is at most two, then
$\mathcal P$ contains an instance of one of the five merge patterns.
\end{lemma}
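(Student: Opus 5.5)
Take a rank-decomposition $(T,\mu)$ of $(C,\mathcal P)$ of width at most two and give each leaf the weight $\rho(A)\in\{1,2\}$ of its class; cleanness makes every class an $S$- or $D$-class. Since the total weight is at least seven, \zcref{lem:side-weight} gives an edge $e$ of $T$ with a side $Y$ of weight $4$, $5$ or $6$. Let $\mathcal Y$ be the family of classes at the leaves of $Y$. Then $f(\mathcal Y)\leq2$, because $e$ displays $\mathcal Y$. Every subfamily displayed by an edge inside $Y$ also has $f\leq2$. The plan is to read one of the five merge patterns off the subtree $Y$.

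The main tool is that cleanness forbids many small displayed subfamilies. A displayed pair containing a $D$-class has cut-rank at least three, so no internal node inside $Y$ has as its two children a $D$-leaf and a single other class. Sides of a subcubic rooted tree come in nested pairs, so I would analyse the root split of $Y$ into sides $Y_1,Y_2$ and recurse.

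\textbf{Weight 4.} The possible shapes are $\{D,D\}$, $\{D,S,S\}$ and $\{S,S,S,S\}$. The first is excluded: it is a displayed pair containing a $D$-class. The other two are exactly rows 1 and 2.

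\textbf{Weight 5.} A side of weight five splits into two sides of weights $(4,1)$, $(3,2)$ or $(1,4)$.
\begin{itemize}
\item If one child has weight four and is itself displayed, apply the weight-4 case to it.
\item If the split is $(3,2)$, a weight-3 side is either $\{S,S,S\}$ or $\{D,S\}$. The pair $\{D,S\}$ is forbidden, so this side is a low triple. The weight-2 side is a single $D$-class or a pair $\{S,S\}$, giving rows 3 and 4.
\item A $(4,1)$ split at the root has a displayed weight-4 child, which falls under the first item.
\end{itemize}

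\textbf{Weight 6.} The possible splits are $(3,3)$, $(4,2)$, $(5,1)$ and $(2,4)$ up to order. A weight-4 or weight-5 child is itself a displayed side, and the earlier cases apply to it. The remaining split is $(3,3)$. Each weight-3 side must be a low triple, since $\{D,S\}$ is forbidden. This gives two disjoint low triples, which is row 5.

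In every case the pattern found is a subfamily of classes of $\mathcal P$ whose union has cut-rank at most two, as the patterns require. The only point needing care is that displayed sides of weight at most three inside $Y$ are forced into the shapes above. Single leaves are fine. A weight-2 side is $D$ or $\{S,S\}$. A weight-3 side is $\{S,S,S\}$, because $\{D,S\}$ has cut-rank at least three by cleanness. A low triple is by definition three $S$-classes with union of cut-rank at most two, so any displayed $\{S,S,S\}$ qualifies.

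I expect the main obstacle to be this careful bookkeeping of the recursive case split. One must be sure that a weight-4 side is never the forbidden $\{D,D\}$. One must also handle the root of $Y$ when it has degree two in the subcubic tree: that node is an internal node of $T$, so $Y$ really does split into exactly two displayed sides.
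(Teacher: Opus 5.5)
Your argument is correct, and it uses the same frame as the paper: a width-two decomposition with leaf weights $\rho(A)$, the side-weight lemma giving a displayed side of weight $4$, $5$, or $6$, and cleanness forbidding any displayed pair that contains a $D$-class. The internal case analysis is organized differently.

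The paper splits on whether the side $\mathcal C$ contains a $D$-class. If it does, it takes a deepest $D$-leaf $A$ and its sibling branch $\mathcal Z$. Cleanness excludes $\abs{\mathcal Z}=1$, depth forces every class of $\mathcal Z$ to be an $S$-class, and $\abs{\mathcal Z}=2,3,4$ yields rows 1, 3, 2. Only the all-$S$ case is handled by split sizes.

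You instead classify small displayed sides once: weight $2$ is a $D$-class or $\{S,S\}$; weight $3$ is necessarily a low triple; weight $4$ is row 1 or row 2, since $\{D,D\}$ is excluded. You then reduce weights $5$ and $6$ to these through the root split, treating $D$- and $S$-classes uniformly. Your route avoids the depth argument and makes the recursion transparent. The paper's route disposes of every configuration containing a $D$-class in a single step, whereas in yours shapes such as $\{D,D,S\}$ or $\{D,D,S,S\}$ are eliminated or reduced implicitly by the recursion.

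One correction: your closing remark about degree-two nodes is wrong as stated. If the end of $e$ inside $Y$ has degree two in $T$, it has only one child, so $Y$ does not split there. Suppress degree-two nodes of $T$ at the outset, which changes no displayed subfamily, or descend along them. The paper's proofs of the side-weight and progress lemmas tacitly make the same assumption.
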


\begin{proof}
Fix a rank-decomposition $(T,\mu)$ of $(C,\mathcal P)$ of width at most two and give
the leaf of each class $A$ the weight $\rho(A)$, so that the total weight is at least
seven.  Every side of $T$ displays a subfamily whose union has cut-rank at most two.
By \zcref{lem:side-weight} some side $\mathcal C$ has weight $w\in\{4,5,6\}$.

First suppose $\mathcal C$ contains a $D$-class.  Root $\mathcal C$ at its defining
edge, choose a $D$-class $A$ in $\mathcal C$ whose leaf is deepest, and let
$\mathcal Z$ be the sibling branch of that leaf, so that $\mathcal Z$ and
$\{A\}\cup\mathcal Z$ are displayed subfamilies inside~$\mathcal C$ and
$f(\{A\}\cup\mathcal Z)\leq2$.  If $\abs{\mathcal Z}=1$, then $\{A\}\cup\mathcal Z$ is
a pair of classes involving a $D$-class whose union has cut-rank at most two,
contradicting cleanness; so $\abs{\mathcal Z}\geq2$, all classes of~$\mathcal Z$ lie
strictly deeper than $A$ and are therefore $S$-classes, and
$\abs{\mathcal Z}\leq w-2\leq4$.  If $\abs{\mathcal Z}=2$, row 1 succeeds on
$\{A\}\cup\mathcal Z$.  If $\abs{\mathcal Z}=3$, then $\mathcal Z$ is a displayed low
triple and row 3 succeeds on $\mathcal Z\cup\{A\}$.  If $\abs{\mathcal Z}=4$, then
$\mathcal Z$ is a displayed family of four $S$-classes of cut-rank at most two and row
2 succeeds.

Now suppose all classes of $\mathcal C$ are $S$-classes, so $\abs{\mathcal C}=w$.  If
$w=4$, row 2 succeeds on $\mathcal C$.  If $w=5$, the top node of $\mathcal C$ splits
it into displayed subfamilies of sizes $\{1,4\}$ or $\{2,3\}$.  In the case $\{2,3\}$,
the triple is a displayed low triple $T'$ and row 4 succeeds on $T'$ together with the
remaining two classes, because $f(\mathcal C)\leq2$.  In the case $\{1,4\}$, the
four-class side either splits $\{1,3\}$, yielding a displayed low triple $T'$ and row 4
on $T'$ plus the remaining two classes of $\mathcal C$, or splits $\{2,2\}$, in which
case that four-class side itself yields row 2.  If $w=6$, the top node splits
$\mathcal C$ into sizes $\{3,3\}$, $\{2,4\}$, or $\{1,5\}$.  For $\{3,3\}$, both halves
are displayed low triples and row 5 succeeds.  For $\{2,4\}$, the four-class side
yields row 2.  For $\{1,5\}$, apply the $w=5$ analysis to the five-class side $Y$,
noting $f(Y)\leq2$: it yields row 2, or a displayed low triple $T'\subseteq Y$ giving
row 4 on $T'$ plus the other two classes of $Y$.
\end{proof}

\paragraph{The reduction algorithm.}
We now turn the preceding lemmas into a self-contained procedure for deciding whether a
graph has rank-width at most two.  Its primary state is a partition; the blocks carry
partial decompositions only as certificates that the cuts already constructed have
width at most two.

Fix a connected component $C$.  A class $B$ of the current partition, which we call a
\emph{block}, comes with a rooted partial rank-decomposition on $B$ whose displayed
cuts, evaluated in $C$, have cut-rank at most two; as for classes, a block is an
\emph{$S$-block} or a \emph{$D$-block} according to whether its cut-rank is one or two.
It also comes with a set
$P(B)\subseteq B$ of at most two representative vertices whose rows form a basis of
$A(C)[B,V(C)\setminus B]$.  
If~$\mathcal B$ is the set of current blocks, put
\[
  R=\bigcup_{B\in\mathcal B}P(B)
  \quad\text{and}\quad
  P(\mathcal A)=\bigcup_{B\in\mathcal A}P(B)
  \quad(\mathcal A\subseteq\mathcal B).
\]
The representatives retain exactly the information needed for every subsequent
cut-rank test:
\begin{equation}\label{eq:representative-rank}
 \rho_C\left(\bigcup\mathcal A\right)
 =\rank_{\Ftwo} A(C)
   [P(\mathcal A),R\setminus P(\mathcal A)].
\end{equation}
Indeed, the representatives span the rows on one side of the cut and, by symmetry of the
adjacency matrix, the representatives outside $\mathcal A$ span its columns.  In
particular, $\abs{P(B)}=\rho_C(B)$ for every proper block $B$, and $\abs R$ is the
weight of the current partition.

Suppose that a family $\mathcal Q$ of blocks has union of cut-rank at most two.  Row
reduction of the matrix in \eqref{eq:representative-rank} selects at most two vertices
of $P(\mathcal Q)$ whose rows form a basis.  These vertices become the representatives
of the new block $\bigcup\mathcal Q$.  To \emph{merge} $\mathcal Q$, we also join its
rooted partial decompositions by the connector prescribed in the pattern table (or by a
single binary join when $\mathcal Q$ consists of two blocks).  Thus every displayed cut
created inside the new block has cut-rank at most two, and
\eqref{eq:representative-rank} remains valid after the merge.

There are two kinds of admissible merges.  A \emph{pair reduction} merges two distinct
blocks $B_1,B_2$ if either both are $S$-blocks and
$\rho_C(B_1\cup B_2)\leq1$, or at least one is a $D$-block and
$\rho_C(B_1\cup B_2)\leq2$.  By \zcref{lem:pair-merge}, this reduction is safe.
Moreover, no pair reduction is available exactly when the current partition is clean.
A \emph{pattern reduction} merges a family that occurs in one of the five rows of the
pattern table.  Its position in the ordered list below is important: rows 4 and 5 are
used only after row 2 has been exhausted, as required by \zcref{lem:merge-safe}.

\begin{algorithm}[H]
\caption{Deciding whether a graph has rank-width at most two}
\label{alg:rw2}
\begin{algorithmic}[1]
\Require A graph $G$
\ForAll{connected components $C$ of $G$ with $\abs{V(C)}\geq7$}
  \State Set $\mathcal B=\{\{v\}:v\in V(C)\}$ and $P(\{v\})=\{v\}$ for every
         $v\in V(C)$; thus $R=V(C)$.
  \State Apply the first applicable rule in the following ordered list.
  \Statex \hspace{\algorithmicindent}\textup{(a)}
    If $\abs R\leq6$, then declare the reduction of $C$ successful.
  \Statex \hspace{\algorithmicindent}\textup{(b)}
    If a pair reduction is available, then perform one.
  \Statex \hspace{\algorithmicindent}\textup{(c)}
    If no pair reduction is available and a pattern from row 1 or row 2 is present,
    then merge one such pattern.
  \Statex \hspace{\algorithmicindent}\textup{(d)}
    If none of (a)--(c) applies and a pattern from rows 3--5 is present,
    then merge one such pattern.
  \Statex \hspace{\algorithmicindent}\textup{(e)}
    If $\abs R>6$ and none of (b)--(d) applies, then declare the reduction of $C$
    unsuccessful.
  \State After a merge in (b), (c), or (d), inspect the ordered list again from (a).
\EndFor
\State The answer is \textbf{yes} if every component considered above is declared
       successful, and \textbf{no} if one is declared unsuccessful.
\end{algorithmic}
\end{algorithm}

Components with at most six vertices need not be listed in
\zcref{alg:rw2}: their singleton partitions satisfy \zcref{lem:six-reps}.  In each of
(b)--(d), any applicable family may be chosen; no backtracking is required.  Rule (b)
is found by examining all pairs.  For rule (c), one examines every $D$-block with every
pair of $S$-blocks and every set of four $S$-blocks.  Rule (d) can be carried out by
first listing all low triples $T$.  For each $T$, one tests
$T\cup\{D\}$ for every $D$-block, $T\cup\{S,S\}$ for every two further $S$-blocks,
and $T\cup T'$ for every disjoint low triple~$T'$.  These are precisely the families in
rows 3--5.

The same formulation explains why the procedure is fast.  If there are $b$ blocks,
one pass through (b) tests $O(b^2)$ pairs, (c) tests $O(b^4)$ families, and (d) tests
$O(b^6)$ families.  Every merge decreases the number of blocks, so fewer than $n$
passes can perform a merge.  The crude bound is therefore $O(n^7)$ cut-rank tests.
Each such test is Gaussian elimination on the representative matrix in
\eqref{eq:representative-rank}; for the graphs considered here this matrix is encoded
by a handful of binary words.

\begin{proposition}\label{prop:rw2-algorithm}
\zcref{alg:rw2} answers \textbf{yes} if and only if $G$ has rank-width at most two.
\end{proposition}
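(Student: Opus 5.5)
We argue one connected component $C$ at a time, since rank-width is the maximum over components and components with at most six vertices have rank-width at most two by \zcref{lem:six-reps}. For a fixed $C$ on at least seven vertices we maintain the following invariant throughout the run on $C$: the current block partition $\mathcal B$ of $V(C)$ consists of blocks of cut-rank at most two, and
\[
\rw(C)\leq2 \iff \text{the rank-width of } (C,\mathcal B) \text{ is at most } 2 .
\]
Initially $\mathcal B$ is the singleton partition, so the invariant holds because the rank-width of $(C,\mathcal B)$ equals $\rw(C)$.

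Every merge preserves the invariant. In rule (b) this follows from \zcref{lem:pair-merge} together with \zcref{lem:titanic-set}, and the new block has cut-rank at most two by the merge condition. In rules (c) and (d) no pair reduction is available, so $\mathcal B$ is clean, as noted in the text.
\begin{itemize}
\item Rows 1 and 2 in rule (c) are safe by \zcref{lem:merge-safe} with no extra hypothesis.
\item Rows 4 and 5 in rule (d) are reached only after rule (c) has failed. Hence no four $S$-blocks have a union of cut-rank at most two, which is exactly the additional hypothesis of \zcref{lem:merge-safe}.
\end{itemize}
In each case \zcref{lem:titanic-set} with $k=2$ gives the equivalence for the coarser partition. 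All cut-rank tests performed by the algorithm compute true cut-ranks by \eqref{eq:representative-rank}, so the rules the algorithm checks are exactly the hypotheses of these lemmas.

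The algorithm terminates on $C$ because every merge strictly decreases the number of blocks. We then look at how it terminated.
\begin{itemize}
\item If it ends in rule (a), then $\abs{R}\leq6$ is the weight of $\mathcal B$, since $\abs{P(B)}=\rho_C(B)$. By \zcref{lem:six-reps} the rank-width of $(C,\mathcal B)$ is at most two, so by the invariant $\rw(C)\leq2$. The connectors also give an explicit decomposition.
\item If it ends in rule (e), then $\mathcal B$ is clean because no pair reduction is available, and it has weight at least seven. It also contains no instance of any of the five merge patterns. Suppose $\rw(C)\leq2$. By the invariant the rank-width of $(C,\mathcal B)$ would be at most two, and \zcref{lem:merge-exists} would then supply a pattern, a contradiction. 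Hence $\rw(C)>2$.
\end{itemize}
Combining the components, the algorithm answers \textbf{yes} exactly when every component has rank-width at most two, that is, exactly when $\rw(G)\leq2$.

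The main point needing care is that cleanness and the rule ordering really deliver the hypotheses of \zcref{lem:merge-safe} at the moment each pattern merge is applied. A second subtlety concerns degenerate blocks: a block equal to all of $V(C)$ has cut-rank $0$ and is neither an $S$-block nor a $D$-block. This case cannot arise before rule (a) fires, because the weight stays at least seven until then. So every block is a nonempty proper subset of $V(C)$, and connectivity of $C$ gives it cut-rank one or two.
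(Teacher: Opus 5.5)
Your proof is correct and follows essentially the same route as the paper. Both use the invariant that the rank-width of $(C,\mathcal B)$ is at most two exactly when $\rw(C)\leq 2$. Both preserve it at each merge via the pair-merge and safety lemmas feeding the titanic-set lemma, and both settle termination by the base-case lemma for rule (a) and the contrapositive of the progress lemma for rule (e). Your bullets omit row 3 of rule (d), but that row needs only the cleanness you already established. Your explicit treatment of the invariant that blocks have cut-rank at most two, and of degenerate blocks, spells out points the paper leaves implicit.
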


\begin{proof}
Rank-width is the maximum over connected components, and components with at most six
vertices are settled by \zcref{lem:six-reps} applied to singleton classes.  So
fix a component $C$ with at least seven vertices and consider the invariant
\[
 \tag{$\ast$} \label{eq:ast}
 \text{the rank-width of $(C,\mathcal B)$ is at most two if and only if
 $\rw(C)\leq2$,}
\]
which holds initially because the blocks are singletons.  A merge in rule (b) satisfies
\zcref{lem:pair-merge}.  When rule (c) applies, no pair reduction is available, so the
partition is clean and \zcref{lem:merge-safe} applies.  When rule (d) applies, the
partition is clean and row 2 is unavailable; hence no four $S$-classes have a union of
cut-rank at most two, which is the additional hypothesis of
\zcref{lem:merge-safe} for rows 4 and 5.  Thus every family merged in (b)--(d) satisfies
the hypothesis of \zcref{lem:titanic-set} with $k=2$.  Each merge preserves \eqref{eq:ast},
and the representative construction above preserves \eqref{eq:representative-rank}.

If rule (a) applies, then the rank-width of $(C,\mathcal B)$ is at most two by
\zcref{lem:six-reps}, and \eqref{eq:ast} gives $\rw(C)\leq2$.  If rule (e) applies, the
partition is clean, has weight at least seven, and contains none of the five merge
patterns.  The contrapositive of \zcref{lem:merge-exists} gives that the rank-width of
$(C,\mathcal B)$ exceeds two, and \eqref{eq:ast} gives $\rw(C)>2$.  Every merge strictly
decreases the number of blocks, so one of these two conclusions is reached after
finitely many steps.  The final answer is correct because rank-width is the maximum
over the connected components.
\end{proof}

An independent verifier specifies the same decision problem by the following direct
subset dynamic program from Oum~\cite{Oum2009}.  For $S\subseteq V(G)$, let $D_k(S)$ hold when $S$ can
be the leaf set of a rooted binary partial rank-decomposition whose displayed cuts all
have cut-rank at most $k$.  Then
\begin{equation}\label{eq:subset-dp}
 D_k(S)=\bigl[\rho_G(S)\leq k\bigr]\ \wedge\
 \left(
 |S|=1\ \vee\
 \bigvee_{\substack{\varnothing\neq A\subsetneq S}}
       \bigl(D_k(A)\wedge D_k(S\setminus A)\bigr)
 \right).
\end{equation}
Thus $\rw(G)\leq k$ if and only if $D_k(V(G))$ holds.  Symmetry breaking by fixing one
vertex in $A$ gives an $O(3^n)$ implementation, which is more than adequate for the 25
graphs with at most ten vertices and their reductions.  This verifier is independent of the
optimized implementation used in the search.  It is, however, no substitute for
it: on 16-vertex inputs the implementation of \zcref{alg:rw2} decides a graph in roughly a
microsecond, while an optimized implementation of \eqref{eq:subset-dp} needs seven to
eight milliseconds on a laptop---a factor of several thousand that the $9\times10^{10}$ rank-width
decisions of the final layer cannot absorb: at that rate the final layer alone would
require about 20 CPU-years.

By \zcref{prop:three-reductions,prop:monotone}, a graph $G$ of rank-width greater than two
is an excluded vertex-minor if and only if, for every $v\in V(G)$, the three graphs
\[
  G\setminus v,\qquad G*v\setminus v,\qquad G/v
\]
have rank-width at most two.  This is the test used for vertex-minor candidates.  For a
pivot-minor candidate the analogous test checks ordinary deletion and pivot-deletion at
every vertex.  By \zcref{prop:two-pivot-reductions}, these are all one-vertex
pivot-minors up to pivot equivalence.

\subsection{Further optimizations}\label{sec:further-optimizations}

Three additional refinements reduce the number of expensive rank-width and
classification calls.  First, before classifying an extension $G$, we test the deletion
of two fixed old vertices.  In the implementation's zero-based labeling, the old
vertices of an $n$-vertex child are $0,\ldots,n-2$, the new vertex is $n-1$, and the
two selected old vertices are $0$ and $n-2$.  If one of these
deletions has rank-width greater than two, then $G$ is neither a member of the next
frontier nor minimal under vertex-minors, and no further work on $G$ is needed.  The
deletion of the newly added vertex is not tested, since it is the frontier parent and is
already known to have rank-width at most two.

Second, the 16-vertex layer is treated separately because rank-width-two children will
not be extended.  We test the old-vertex deletions before testing $G$ itself and classify
only candidates that survive the complete minimality test.  For a 16-vertex child,
the deletion of old vertex 14 is
tested first.  Extensions that differ only in their adjacency to vertex 14 give the same
graph after this deletion, so its rank-width result can be reused for the paired
extensions.

Finally, for pivot-minor candidates on 15 or 16 vertices, every excluded pivot-minor must
have a four-vertex set $X$ with $\rho_G(X)\leq2$, by
\zcref{prop:excluded-connectivity,prop:excluded-not-three-plus-one}.  Such a set in a
child either consists of four old vertices or of the new vertex and three old vertices.
We therefore record the relevant low-rank four-sets and three-sets of each parent and
retest only the inherited possibilities in its children.  This avoids an exhaustive
four-set scan for every extension without changing the set of candidates.

\section{Exact local-equivalence classification}\label{sec:lc-key}

Bouchet gave a polynomial-time algorithm for recognizing whether two graphs are locally
equivalent~\cite{Bouchet1991},
but in our case, we need to identify locally equivalent graphs up to isomorphisms.
The exhaustive search additionally needs a compact
canonical key for grouping millions of graphs, while traversing the whole
local-complementation orbit of every candidate is already too slow for graphs on 12
vertices.  Our
classifier uses an exact invariant of Bouchet's isotropic system; the terminology of
graph-state stabilizers is only a second interpretation of the same binary space.  We
give the complete correspondence before describing the implementation.

\subsection{From a graph to an isotropic system}

Let $K=\Ftwo^2=\{0,X,Z,Y\}$, where
\[
  X=(1,0),\qquad Z=(0,1),\qquad Y=(1,1),
\]
and equip $K$ with the alternating form
\[
  \langle(x,z),(x',z')\rangle_K=xz'+zx'.
\]
Thus two distinct nonzero elements of $K$ have inner product one.
For a vertex set $V$ and two vectors $x,y\in K^V$,
we define $\langle x,y\rangle= \sum_{v\in V} \langle x(v),y(v)\rangle_K$.
A subspace $L$ of $K^V$ is called \emph{totally isotropic} if $\langle x,y\rangle=0$ for all $x,y\in L$.

An \emph{isotropic system}
on $V$ is a $|V|$-dimensional totally isotropic subspace of $K^V$; this is Bouchet's
original language \cite{Bouchet1987a}.
(Bouchet used $\alpha$, $\beta$, $\gamma$ instead of $X$, $Y$, $Z$, respectively.)

For a graph $G$ with $V(G)=[n]$, define
\begin{equation}\label{eq:code}
  L(G)=\{(x,xA(G)):x\in\Ftwo^n\}
      =\operatorname{rowspan}_{\Ftwo}[I_n\mid A(G)]\subseteq K^{[n]}.
\end{equation}
The $i$th pair of coordinates is read as an element of $K$.  The identity block gives
$\dim L(G)=n$, and symmetry of $A(G)$ gives
\[
 \langle(x,xA(G)),(y,yA(G))\rangle
 =xA(G)y^{\mathsf T}+xA(G)^{\mathsf T}y^{\mathsf T}=0.
\]
Hence $L(G)$ is an isotropic system.  In Bouchet's terminology, the graph and the two
constant vectors $X,Z$ form a graphic presentation of $L(G)$
\cite{Bouchet1988}.  In the graph-state terminology, the rows of
$[I_n\mid A(G)]$ are stabilizer generators; no phase information is used here.

Every permutation of the three nonzero elements $X,Z,Y$ preserves
$\langle\cdot,\cdot\rangle_K$.  A \emph{local symbol map} is an independent such
permutation at each vertex.  Together with a permutation of the vertices, these maps
form the group
\[
  \Gamma_n=S_n\ltimes S_3^n
\]
acting linearly on $K^{[n]}$.  The following theorem is the precise bridge to local
complementation.

\begin{theorem}[Bouchet \cite{Bouchet1988}; Van den Nest, Dehaene, and De Moor~\cite{NDM2004}]
\label{thm:isotropic-lc}
Two $n$-vertex graphs $G$ and $H$ are locally equivalent up to isomorphism if and only
if some element of $\Gamma_n$ maps $L(G)$ onto $L(H)$.
\end{theorem}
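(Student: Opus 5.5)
The plan is to prove both directions by tracking how the two generating operations act on the row space $L(G)=\operatorname{rowspan}[I_n\mid A(G)]$. Two reductions handle the easy parts. First, a vertex permutation $\pi$ with $H=\pi(G)$ simply permutes coordinates, and maps $L(G)$ onto $L(H)$. Second, since $\Gamma_n$ is a group, it suffices to treat a single local complementation $G\mapsto G*v$ and a single permutation; the converse then needs a separate argument.

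\textbf{Forward direction.} I would show that $L(G*v)$ is the image of $L(G)$ under the local symbol map that is the identity at every vertex except:
\begin{itemize}
\item at $v$, the transposition swapping $X$ and $Y$ (fixing $Z$);
\item at each neighbor $w$ of $v$, the transposition swapping $Z$ and $Y$ (fixing $X$).
\end{itemize}
Concretely, on $(x,z)$ coordinates these are the linear maps $(x,z)\mapsto(x+z,z)$ at $v$ and $(x,z)\mapsto(x,z+x)$ at each neighbor. The check is direct. Take the generator row $r_u=(e_u,A_u)$, where $A_u$ is the $u$th row of $A(G)$, and compute its image. For $u=v$, the image is $r_v$ plus the unit vector $e_v$ inserted appropriately, and this equals the corresponding row of $G*v$ modulo other rows. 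For $u\ne v$, the image differs from $(e_u,A(G*v)_u)$ by a multiple of the image of $r_v$ exactly when $u\in N(v)$.

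The cleanest route, I expect, is to write the map as a block matrix $\begin{pmatrix} I & D_v\\ 0 & I\end{pmatrix}$-type product, and then verify that $[I\mid A]M$ equals, after left multiplication by an invertible matrix, $[I\mid A']$ with $A'=A+A_vA_v^{\mathsf T}$ off the diagonal. Equality of dimensions then gives equality of subspaces.

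\textbf{Converse direction.} This is the main obstacle. Suppose some $\gamma=(\pi,(\sigma_u))\in\Gamma_n$ maps $L(G)$ onto $L(H)$. First absorb $\pi$, so we may assume $\gamma$ is a local symbol map. Then $L(H)=\gamma L(G)$ has two graphic presentations: by $(G,\gamma X,\gamma Z)$, and by $(H,X,Z)$.

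I would then invoke Bouchet's theory of graphic presentations. Two graphs presenting the same isotropic system are locally equivalent: every Eulerian vector of $L$ arises from some locally equivalent graph, and the graphs associated to Eulerian vectors of one isotropic system form exactly one local-equivalence class \cite{Bouchet1988}.

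Alternatively, following \cite{NDM2004}, I would argue constructively. Pick a vertex $u$ where $\sigma_u$ is not the identity. Then apply $G*u$, or $G*w$ for a neighbor $w$ of $u$, or the pivot $G\wedge uw$ (plus handling isolated vertices), so as to make $\sigma_u$ the identity without disturbing the other coordinates. The forward computation shows which symbol changes each local complementation realizes, and the delicate point is showing that these changes generate all of $S_3$ at a vertex modulo the stabilizer of $L(H)$.

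Finally, once $\gamma$ is the identity on every vertex, $L(G')=L(H)$ forces $A(G')=A(H)$. This holds because the unique basis of the form $[I\mid *]$ is determined by the subspace whenever its projection to the $X$-part is bijective, which holds since both spaces contain rows $[I\mid A]$.

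Given the difficulty, I would cite Bouchet for the converse and prove only the forward computation in detail.
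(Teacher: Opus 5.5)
Your proposal is correct and follows essentially the paper's route. Coordinate permutations absorb the isomorphism. The forward direction is the fact that $G*v$ arises from a local symbol map; this is Theorem~2 of Van den Nest, Dehaene, and De Moor, which the paper cites and you verify by hand. The converse is delegated, as in the paper, to Bouchet's theorem that the graphs of all graphic presentations of one isotropic system form a single local-equivalence class (equivalently, their Theorem~3).

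One slip in your forward computation should be fixed: your verbal descriptions of the transpositions are interchanged relative to your formulas.
\begin{itemize}
\item The map $(x,z)\mapsto(x+z,z)$ at $v$ fixes $X=(1,0)$ and swaps $Z$ and $Y$.
\item The map $(x,z)\mapsto(x,z+x)$ at each neighbor fixes $Z$ and swaps $X$ and $Y$.
\end{itemize}
Your formulas are the correct ones. The transpositions as you describe them in words already fail for the path $u$--$v$--$w$ (coordinates ordered $u,v,w$). There they send $r_u=(X,Z,0)$ to itself, although the only word of $L(G*v)$ with $x$-part $e_u$ is $(X,Z,Z)$.

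With the correct map, the image of $r_v$ is exactly $r_v$, which is already the $v$th row of $[I\mid A(G*v)]$; no unit vector is added. For $u\in N(v)$, the image of $r_u$ is the sum of the $u$th and $v$th rows of $[I\mid A(G*v)]$, as you say. Since the map is invertible, equality of dimensions then finishes the forward direction.
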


\begin{proof}[References and explanation]
Changing the basis of the $n$ displayed generators does not change $L(G)$.  In the
binary stabilizer notation, Eq.~(1) of Van den Nest, Dehaene, and De Moor
\cite{NDM2004} says that two stabilizer subspaces are locally Clifford equivalent
exactly when one is obtained from the other by an invertible change of generators and
an invertible $2\times2$ binary map at each coordinate.  The six invertible
$2\times2$ matrices over~$\Ftwo$ are precisely the six permutations of $X,Z,Y$.
Their Theorem~2 identifies local complementation at a vertex with one such local map,
and their Theorem~3 proves that these complementations generate every local Clifford
transformation between graph presentations.  This is also the content of the theory of
graphic presentations of isotropic systems developed by Bouchet \cite{Bouchet1988};
including $S_n$ accounts for graph isomorphism because relabelings of $G$ correspond
exactly to coordinate permutations of $K^{[n]}$.
\end{proof}

Thus the classification problem has become a finite linear-algebra problem: canonize
the subspace $L(G)$ under coordinate permutations and independent permutations of the
three nonzero symbols.

\subsection{Canonical encoding of a graph up to local equivalence and isomorphism}

The encoding used here is adapted from the code graph of Danielsen and
Parker~\cite{DP2009}, which in turn extends Östergård's method for classifying
linear codes~\cite{Ostergard2002}.  Their construction represents selected
low-weight words of an additive code over $\operatorname{GF}(4)$ by a colored incidence graph
having three nonzero-symbol vertices at each coordinate, and canonizes that
graph with nauty.  We apply the same basic construction to the isotropic space
$L(G)$.  The additional safeguard used below is to test the automorphisms of
the truncated incidence graph on the full space and to increase the weight
cutoff when that test fails.

For $a\in K^{[n]}$, its \emph{support} is
$\operatorname{supp}(a)=\{i:a(i)\neq0\}$ and its weight is the size of its support.  Weight is invariant under~$\Gamma_n$.  For a cutoff $t$, let
\[
  W_t(G)=\{a\in L(G)\setminus\{0\}:|\operatorname{supp}(a)|\leq t\}.
\]
Construct a colored incidence graph $\mathcal I_t(G)$ with three kinds of vertices:
\begin{enumerate}
  \item a coordinate vertex $q_i$ for each $i\in[n]$;
  \item three symbol vertices $X_i,Z_i,Y_i$, each adjacent to $q_i$;
  \item a word vertex $w_a$ for each $a\in W_t(G)$, adjacent to the single symbol
        vertex $a(i)_i$ for every $i\in\operatorname{supp}(a)$.
\end{enumerate}
The three kinds are separate color classes.  The stars with center $q_i$ and leaves
$X_i,Z_i,Y_i$ force every color-preserving isomorphism to map an entire
coordinate, together with its three symbols, to another coordinate.

\begin{lemma}\label{lem:incidence-action}
A color-preserving isomorphism from $\mathcal I_t(G)$ to $\mathcal I_t(H)$ is the same
thing as an element $\gamma\in\Gamma_n$ satisfying
$\gamma(W_t(G))=W_t(H)$.
\end{lemma}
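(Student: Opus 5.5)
We prove the two directions separately, reading ``the same thing as'' as a natural bijection: every color-preserving isomorphism $\phi$ determines a unique $\gamma\in\Gamma_n$ with $\gamma(W_t(G))=W_t(H)$, and every such $\gamma$ extends to a unique isomorphism. There is one caveat: uniqueness of the extension needs distinct words to have distinct neighborhoods. This holds because a word $a$ is determined by the set $\{a(i)_i\}$ of symbol vertices it touches. Thus the word vertices form a \emph{set system} on the symbol vertices, with no multiplicities.

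\emph{From $\gamma$ to an isomorphism.} Write $\gamma=(\pi,(\sigma_i)_i)$, where $\pi\in S_n$ and each $\sigma_i$ permutes $\{X,Z,Y\}$. Let $\gamma(a)$ be the vector whose $\pi(i)$-th coordinate is $\sigma_i(a(i))$, with $\sigma_i(0)=0$. Define $\phi$ by
\[
q_i\mapsto q_{\pi(i)},\qquad s_i\mapsto \sigma_i(s)_{\pi(i)}\ (s\in\{X,Z,Y\}),\qquad w_a\mapsto w_{\gamma(a)}.
\]
This respects colors. It maps stars to stars. Since $\gamma$ preserves supports up to $\pi$, and hence preserves weight, it sends $W_t(G)$ bijectively onto $W_t(H)$. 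An edge $w_a\,a(i)_i$ goes to $w_{\gamma(a)}\,\gamma(a)(\pi(i))_{\pi(i)}$, which is again an edge. Non-edges are preserved likewise, because $\phi$ is a bijection on each color class and edges correspond exactly.

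\emph{From an isomorphism to $\gamma$.} Let $\phi$ be a color-preserving isomorphism. It permutes the coordinate vertices, say $q_i\mapsto q_{\pi(i)}$. Each symbol vertex has a unique coordinate neighbor, so $\phi$ sends the three leaves of the star at $q_i$ onto the three leaves at $q_{\pi(i)}$. This gives a bijection $\sigma_i$ of $\{X,Z,Y\}$. The pair $(\pi,(\sigma_i))$ is therefore an element $\gamma\in\Gamma_n$, which is linear and form-preserving because every permutation of the nonzero symbols preserves $\langle\cdot,\cdot\rangle_K$.

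For a word vertex $w_a$, the image $\phi(w_a)=w_b$ with $b\in W_t(H)$. The neighborhood of $w_b$ is $\phi$ of the neighborhood of $w_a$, namely $\{\sigma_i(a(i))_{\pi(i)}: i\in\operatorname{supp}(a)\}$. Since a word is recovered from its neighborhood, $b=\gamma(a)$. Hence $\gamma(W_t(G))\subseteq W_t(H)$. Equality follows because $\phi$ is bijective on word vertices, or by applying the same argument to $\phi^{-1}$.

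Finally, the two constructions are mutually inverse. Starting from $\phi$, the isomorphism built from $\gamma$ agrees with $\phi$ on coordinate and symbol vertices by definition, and on word vertices by the computation $b=\gamma(a)$. Starting from $\gamma$, one recovers $\gamma$ directly.

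The only delicate point is this injectivity of words. The argument uses that word vertices are only adjacent to symbol vertices, and that each word touches exactly one symbol per coordinate in its support. A word vertex can never be adjacent to a coordinate vertex, because the colors separate the three kinds of vertices. No other obstacle arises.
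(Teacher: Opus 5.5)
Your proof is correct and follows the same route as the paper's: the coordinate bijection, the induced permutation of $X,Z,Y$ at each coordinate, word vertices forced because a word is recorded by its neighborhood, and the immediate reverse construction; you simply write out the injectivity of words and the mutual inverseness of the two constructions. One small wording slip: in the direction from $\gamma$, the bijection $W_t(G)\to W_t(H)$ comes from the hypothesis $\gamma(W_t(G))=W_t(H)$ together with the injectivity of $\gamma$ on $K^{[n]}$, not from weight preservation.
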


\begin{proof}
An isomorphism first gives a bijection of the coordinate vertices.  The three symbol
neighbors of each coordinate must be carried to the three symbol neighbors of its
image, giving one permutation of $X,Z,Y$ at that coordinate.  The neighborhood of a
word vertex records every nonzero entry of the word, so its image is forced to be the
word vertex for the transformed word.  This proves one direction, and the reverse
construction is immediate.
\end{proof}

Taking $t=n$ therefore gives an exact but potentially large representation: it has one
word vertex for each of the $2^n-1$ nonzero vectors of $L(G)$.  The implementation
usually obtains the same canonical coordinate and symbol orders from only the
low-weight words.  Sparse nauty \cite{MP2014} canonically labels
$\mathcal I_t(G)$ and supplies generators of its color-preserving automorphism group.
Each generator is converted, using \zcref{lem:incidence-action}, to an element of
$\Gamma_n$ and tested on the \emph{full} space $L(G)$.  It is enough to transform the
$n$ rows of $[I_n\mid A(G)]$ and test whether their images lie in $L(G)$, because both
the action and the space are linear.  If a generator fails, the cutoff is increased
and the incidence graph is canonized again.  If all generators pass, every
automorphism of~$\mathcal I_t(G)$ preserves $L(G)$.  At~$t=n$ this condition is
automatic, so the procedure always terminates.

The accepted canonical labeling orders the $n$ coordinates and the three symbols
inside each coordinate.  Apply this column transformation to the original $n$ rows of
$[I_n\mid A(G)]$ and take binary reduced row-echelon form (RREF).
RREF removes the arbitrary
choice of a generator basis.
The whole construction can be written
as follows.  Its number of rows and row length determine $n$, so keys from graphs with
different numbers of vertices cannot collide.

\begin{algorithm}[H]
\caption{The exact adaptive local-equivalence key}
\label{alg:lc-key}
\begin{algorithmic}[1]
\Require An $n$-vertex graph $G$
\Ensure A canonical key for local equivalence and isomorphism
\State enumerate all $2^n-1$ nonzero words of $L(G)$ and their weights
\State Let $t$ run successively from
       $\min\{n,\max\{5,\text{minimum nonzero weight}\}\}$ to $n$
\State For each such $t$, canonically label $\mathcal I_t(G)$ and obtain its
       automorphism generators
\State Choose the first $t$ for which every generator preserves the full space $L(G)$
\State transform $[I_n\mid A(G)]$ to the canonical coordinate and symbol orders
\State \Return the binary RREF of the transformed rows
\end{algorithmic}
\end{algorithm}

The lower cutoff 5 is an empirical tuning choice: among representative frontier
samples it minimized classification time.  It has no role in correctness because a
failed full-space automorphism check raises the cutoff, eventually to $n$.

\begin{lemma}\label{lem:key-exact}
Two graphs receive the same key if and only if they are locally equivalent up to graph
isomorphism.
\end{lemma}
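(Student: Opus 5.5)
The plan is to prove both directions through \zcref{thm:isotropic-lc}, so that both reduce to statements about the subspaces $L(G)$ and $L(H)$ under the action of $\Gamma_n$. Keys from graphs with different numbers of vertices differ in shape, so we may assume $\abs{V(G)}=\abs{V(H)}=n$. The key of $G$ is the RREF of $\sigma_G([I_n\mid A(G)])$, where $\sigma_G\in\Gamma_n$ is the accepted canonical labeling. Since the RREF of a row-generating set is a canonical form for the row space, the key is exactly the subspace $\sigma_G(L(G))$ written in canonical form.

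\emph{Same key implies equivalence.} If the keys agree, then $\sigma_G(L(G))=\sigma_H(L(H))$. Hence $\sigma_H^{-1}\sigma_G\in\Gamma_n$ maps $L(G)$ onto $L(H)$, and \zcref{thm:isotropic-lc} gives local equivalence up to isomorphism. This direction needs nothing about the cutoff.

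\emph{Equivalence implies same key.} Suppose $\gamma\in\Gamma_n$ maps $L(G)$ onto $L(H)$.
\begin{enumerate}
\item Since $\gamma$ preserves weight, $\gamma(W_t(G))=W_t(H)$ for every $t$. By \zcref{lem:incidence-action}, $\gamma$ is then a color-preserving isomorphism $\mathcal I_t(G)\to\mathcal I_t(H)$.
\item The two graphs have the same minimum nonzero weight, so the cutoff sequence is identical. Conjugation by $\gamma$ identifies $\operatorname{Aut}(\mathcal I_t(G))$ with $\operatorname{Aut}(\mathcal I_t(H))$ and carries the stabilizer of $L(G)$ to the stabilizer of $L(H)$. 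Hence ``every automorphism of $\mathcal I_t$ preserves the full space'' holds for $G$ exactly when it holds for $H$. Checking generators decides this property of the whole group, since the stabilizer is a subgroup. So both graphs accept the same cutoff $t$, regardless of which generators nauty returns.
\item At that $t$, canonical labeling gives $\sigma_G(\mathcal I_t(G))=\sigma_H(\mathcal I_t(H))$ as labeled graphs. Therefore $\sigma_H^{-1}\sigma_G\gamma^{-1}$ is a color-preserving automorphism of $\mathcal I_t(H)$. Interpreted in $\Gamma_n$, it preserves $L(H)$ by the acceptance condition.
\item It follows that $\sigma_G(L(G))=\sigma_G\gamma^{-1}(L(H))=\sigma_H(L(H))$, so the keys agree.
\end{enumerate}

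The main obstacle is step 3, which requires a precise interface with nauty's canonical labeling. A canonical labeling of $\mathcal I_t$ determines an element of $\Gamma_n$ only up to automorphisms of $\mathcal I_t$. We must check two things. First, the canonical form is an isomorphism invariant, with color classes fixed in a prescribed order. Second, the induced coordinate-and-symbol order is well defined as a map on $K^{[n]}$ via \zcref{lem:incidence-action}. I would treat the canonical form as the standard property ``$\operatorname{canon}(\mathcal I)=\operatorname{canon}(\mathcal I')$ iff $\mathcal I\cong\mathcal I'$.'' I would then argue that any two canonical labelings of the same graph differ by an automorphism. By acceptance, such an automorphism preserves $L$, so the resulting subspace $\sigma(L)$, and hence the RREF, does not depend on the choice of labeling.
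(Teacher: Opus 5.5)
Your proposal is correct and follows essentially the same route as the paper. The converse composes the two canonical transformations and invokes \zcref{thm:isotropic-lc}. The forward direction uses three ingredients: weight preservation, conjugation of the automorphism groups to show that both runs accept the same cutoff, and the fact that two canonical labelings differ by an automorphism of $\mathcal I_t$, which preserves the full space. Your explicit automorphism $\sigma_H^{-1}\sigma_G\gamma^{-1}$ spells out what the paper states more tersely. So does your remark that, because the stabilizer of $L$ is a subgroup, the acceptance test is a property of the whole automorphism group rather than of the particular generators nauty returns.
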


\begin{proof}
Suppose first that $G$ and $H$ are equivalent.  By \zcref{thm:isotropic-lc}, an element
of $\Gamma_n$ maps $L(G)$ to~$L(H)$ and, because it preserves weight, maps
$W_t(G)$ to $W_t(H)$ for every $t$.  The two adaptive runs therefore use isomorphic
incidence graphs and accept at the same cutoff.  Two canonical labelings of the same
colored incidence graph can differ only by one of its automorphisms.  At the accepted
cutoff every such automorphism preserves the full isotropic space.  Conjugating by an
equivalence transports the automorphism group and the full-space acceptance condition,
so the two runs pass or fail together at each cutoff.  The transformed
full spaces and their unique RREFs agree.

Conversely, equality of keys says that the two canonical column transformations send
$L(G)$ and~$L(H)$ to the same subspace.  Reversing one transformation and applying the
other gives an element of $\Gamma_n$ mapping~$L(G)$ onto $L(H)$.
\zcref{thm:isotropic-lc} then gives local equivalence up to isomorphism.  Notice that the
key contains the transformed full space, not merely its low-weight subset; the adaptive
step therefore cannot create a false equality.
\end{proof}

\subsection{Canonical encoding of a graph up to pivot equivalence and isomorphism}

For $S\subseteq V(G)$, let $\sigma_S$ interchange $X_i$ and $Z_i$ for every $i\in S$
and fix $Y_i$ and all symbols at coordinates outside~$S$.  These are precisely the
local symbol maps preserving the unordered pair $\{X_i,Z_i\}$ at every coordinate.
The following matrix interpretation identifies this subgroup with graph pivots; compare~\cite[Propositions~2.5--2.6]{Oum2004}.

\begin{proposition}\label{prop:pivot-code}
Two graphs $G$ and $H$ are pivot-equivalent up to graph isomorphism if and only if a
coordinate permutation followed by some $\sigma_S$ maps $L(G)$ to $L(H)$.
\end{proposition}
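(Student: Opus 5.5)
We prove both directions through the matrix description of pivoting, in the spirit of \cite[Propositions~2.5--2.6]{Oum2004}. First we dispose of the coordinate permutation. A permutation $\pi$ of $[n]$ maps $L(G)$ to $L(\pi G)$, where $\pi G$ is the relabeled graph. Conjugating $\sigma_S$ by $\pi$ gives $\sigma_{\pi(S)}$. Hence it suffices to show the following for two graphs $G,H$ on $[n]$: $G$ and $H$ are pivot-equivalent if and only if $\sigma_S(L(G))=L(H)$ for some $S\subseteq[n]$.

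For the forward direction it is enough to treat a single pivot $G\wedge uv$ with $uv\in E(G)$, since compositions of maps $\sigma_S$ are again of this form ($\sigma_S\sigma_T=\sigma_{S\triangle T}$). We claim $\sigma_{\{u,v\}}(L(G))=L(G\wedge uv)$. Write $A=A(G)$ in block form with respect to $\{u,v\}$ and its complement:
\[
A=\begin{pmatrix}A_{11}&A_{12}\\A_{21}&A_{22}\end{pmatrix},
\]
where $A_{11}$ is invertible because $uv$ is an edge. The map $\sigma_{\{u,v\}}$ swaps the $x$ and $z$ parts at $u$ and $v$. Consequently $\sigma_{\{u,v\}}(L(G))$ is the set of pairs $(x',z')$ satisfying the relations obtained by exchanging the roles of $x_1$ and $z_1=x_1A_{11}+x_2A_{21}$.

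Solving for $x_1$ gives
\[
x_1=z_1A_{11}^{-1}+x_2A_{21}A_{11}^{-1}.
\]
This expresses the new "$z$-side" as $(x_1',x_2)$ times the principal pivot matrix
\[
A*\{u,v\}=\begin{pmatrix}A_{11}^{-1}&A_{11}^{-1}A_{12}\\A_{21}A_{11}^{-1}&A_{22}+A_{21}A_{11}^{-1}A_{12}\end{pmatrix}.
\]
Tucker's principal pivot is exactly this. Over $\Ftwo$ with $A_{11}=\left(\begin{smallmatrix}0&1\\1&0\end{smallmatrix}\right)$, it is the standard fact that $A*\{u,v\}$ is the adjacency matrix of $G\wedge uv$. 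Its diagonal vanishes: $A_{11}^{-1}$ has zero diagonal, and $A_{22}+A_{21}A_{11}^{-1}A_{12}$ has diagonal entries $2a_{wu}a_{wv}=0$. Since both spaces are $n$-dimensional, the inclusion is an equality.

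The converse is the main obstacle. Suppose $\sigma_S(L(G))=L(H)$. Because $L(H)=\operatorname{rowspan}[I\mid A(H)]$, its projection to the $x$-coordinates is bijective. Hence in $L(G)$ the coordinates $(z_S,x_{\bar S})$ determine every vector. Equivalently, the matrix $[I\mid A]$ restricted to the columns $z_S,x_{\bar S}$ is invertible, which is precisely the condition that the principal submatrix $A[S]$ is nonsingular.

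The task is then to show that a nonsingular principal submatrix $A[S]$ of a symmetric zero-diagonal binary matrix can be eliminated by successive $2\times2$ pivots on edges. The plan is induction on $|S|$. Since $A[S]$ is nonsingular with zero diagonal, some $u\in S$ has a neighbor $v\in S$. We pivot on $uv$, which replaces $A$ by $A(G\wedge uv)$ and $S$ by $S\setminus\{u,v\}$. The principal submatrix $(A*\{u,v\})[S\setminus\{u,v\}]$ is the Schur complement of $A_{11}$ in $A[S]$, so it is again nonsingular, by the determinant (rank) identity for Schur complements. The composition of principal pivots satisfies $(A*\{u,v\})*(S\setminus\{u,v\})=A*S$, by Tucker's composition rule. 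Iterating, we obtain that $A*S$ is the adjacency matrix of a graph pivot-equivalent to $G$. By the forward computation, this graph has space $\sigma_S(L(G))=L(H)$. Finally, $L(\cdot)$ determines the adjacency matrix uniquely: it is the unique $B$ with $\operatorname{rowspan}[I\mid B]$ equal to the given space. Therefore this graph equals $H$, which proves the converse.
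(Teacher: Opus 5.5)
Your proof is correct, but it takes a genuinely different route from the paper's.

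\textbf{The paper's route.} The paper's proof is citation-based and stays in Bouchet's language of graphic presentations.
\begin{enumerate}
\item For pivots giving $\sigma_S$, it uses that pivoting an edge interchanges the two presentation vectors at its ends \cite[(8.3)]{Bouchet1988}.
\item For the converse, it notes that $\sigma_S(L(G))=L(H)$ gives $L(H)$ two graphic presentations, with underlying graphs $G$ and $H$ and presentation vectors valued in $\{X,Z\}$. It then invokes \cite[Proposition~10.1]{Oum2004a} to conclude pivot equivalence.
\end{enumerate}

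\textbf{Your route.} You reprove both ingredients by linear algebra over $\Ftwo$.
\begin{enumerate}
\item The block computation identifies $\sigma_{\{u,v\}}(L(G))$ with $\operatorname{rowspan}[I\mid A*\{u,v\}]$.
\item For the converse, the bijective projection of $L(H)$ onto the $x$-coordinates shows that $A(G)[S]$ must be nonsingular.
\item You then peel off edges of $G[S]$ one at a time. The Schur-complement identity keeps the remaining principal submatrix nonsingular at each step.
\item Uniqueness of the matrix $B$ with prescribed $\operatorname{rowspan}[I\mid B]$ identifies the final graph with $H$.
\end{enumerate}

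\textbf{What each buys.} The paper's argument is two sentences long, given the isotropic-system literature. Yours is self-contained apart from the standard identity $A(G\wedge uv)=A(G)*\{u,v\}$, and it yields more. It shows that $H$ is forced to be the graph with adjacency matrix $A(G)*S$. It also shows that $H$ is reached by pivoting a sequence of pairwise disjoint edges whose union is $S$.

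\textbf{A simplification.} Tucker's composition rule is not actually needed. The pairs you pivot are disjoint and cover $S$, so composing the single-pivot identities already gives $L(G')=\sigma_S(L(G))=L(H)$ for the final graph $G'$. Uniqueness of $B$ then yields $G'=H$ directly. Alternatively, your block computation goes through verbatim for any $S$ with $A[S]$ nonsingular.
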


\begin{proof}
After applying the coordinate permutation, suppose that
$\sigma_S(L(G))=L(H)$.  Then $L(H)$ has graphic presentations with underlying
graphs $G$ and $H$, respectively, whose presentation vectors take values in
$\{X,Z\}$.  Hence $G$ and $H$ are pivot-equivalent by
Oum~\cite[Proposition~10.1]{Oum2004a}.

Conversely, pivoting an edge interchanges the two presentation vectors at its
ends \cite[(8.3)]{Bouchet1988}.  Thus every sequence of pivots acts on $L(G)$
by some $\sigma_S$.  A coordinate permutation accounts for graph isomorphism.
\end{proof}

The pivot classifier refines the incidence-graph coloring by putting $X_i,Z_i$ in one
color class and~$Y_i$ in another.  A color-preserving isomorphism may independently
exchange $X_i$ and $Z_i$ but cannot send either to $Y_i$; hence its local action is
exactly some $\sigma_S$.  The adaptive automorphism check and full-space row reduction
are otherwise unchanged.  Replacing $\Gamma_n$ by this subgroup in the proof of
\zcref{lem:key-exact} therefore gives an exact pivot-equivalence and isomorphism key.

\section{Completeness and correctness}\label{sec:correctness}

\begin{proposition}\label{prop:frontier-induction}
For every $5\leq n\leq15$, the computed set $\cF_n$ contains exactly one representative
of each local-equivalence and isomorphism class of prime $n$-vertex graphs of rank-width
at most two.
\end{proposition}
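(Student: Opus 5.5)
We argue by induction on $n$. The base case $n=5$ is the initialization: by Bouchet's lemma every prime five-vertex graph is locally equivalent to $C_5$, and $\rw(C_5)=2$. So $\cF_5=\{C_5\}$ is correct and complete. For the inductive step, assume $\cF_n$ is correct for some $5\leq n\leq14$, and show that \zcref{alg:layer} produces a correct $\cF_{n+1}$. Correctness has two parts: soundness (every output is a prime $(n+1)$-vertex graph of rank-width at most two, and distinct outputs are inequivalent) and completeness (every class is represented).

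\emph{Soundness.} Each generated $G$ arises from a prime parent $H$ with $\abs{V(H)}\geq5$ by adding $x$ with $\abs{N(x)}\geq2$ and no twin. Hence $G$ is prime by \zcref{lem:prime-extension}. It enters $R$ only if $\rw(G)\leq2$, and this test is exact by \zcref{prop:rw2-algorithm}. The early-rejection filters of \zcref{sec:further-optimizations} discard $G$ only when some induced subgraph has rank-width greater than two. By \zcref{prop:monotone} this forces $\rw(G)>2$, so no member of $R$ is lost. The final quotient uses the key of \zcref{alg:lc-key}. By \zcref{lem:key-exact} two graphs get the same key exactly when they are locally equivalent up to isomorphism, so the representatives are pairwise inequivalent. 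Replacing $G$ by its isomorphism-canonical form does not change its class.

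\emph{Completeness.} Let $G'$ be any prime graph on $n+1\geq6$ vertices with $\rw(G')\leq2$. By \zcref{thm:bouchet-chain}, $G'$ has a prime vertex-minor $H'$ on $n$ vertices, and $\rw(H')\leq2$ by \zcref{prop:monotone}. Write $H'=G''\setminus v$ for some $G''$ locally equivalent to $G'$. By induction, $H'$ is locally equivalent, up to isomorphism, to some $H\in\cF_n$. The key step is to transport the extra vertex along this equivalence:
\begin{itemize}
\item If $H'$ is obtained from $H$ by a sequence of local complementations (after relabeling), apply the same sequence inside $G''$ at the same old vertices.
\item A local complementation at a vertex $u\neq v$ commutes with deleting $v$, that is, $(G''*u)\setminus v=(G''\setminus v)*u$.
\item Running the sequence backwards therefore yields a graph $G$, locally equivalent to $G''$, with $G\setminus v=H$ up to isomorphism.
\end{itemize}
So $G$ is $H$ plus a vertex $x=v$ with some neighborhood $N$. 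Since $G$ is locally equivalent to the prime graph $G'$, it is prime. By \zcref{lem:prime-extension}, $\abs{N}\geq2$ and $x$ has no twin. Thus $G$ is among the extensions enumerated from $H$. Because $\rw(G)=\rw(G')\leq2$, it survives the rank-width test and the filters and is inserted into $R$. Hence the class of $G'$ is represented in $\cF_{n+1}$.

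The main obstacle is this transport step. We must make sure that local complementations at old vertices act compatibly with deleting $v$, and that the isomorphism can be absorbed into the labeling of $H$. We would verify the commutation identity directly from the definition of $G*u$, which only changes edges among neighbors of $u$. The remaining work is bookkeeping: the filters are justified only by monotonicity, and the implementation evaluates keys on canonical forms. Both points are covered by the cited exactness lemmas.
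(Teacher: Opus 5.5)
Your proof is correct and follows essentially the same route as the paper. Both arguments run induction from $C_5$, use Bouchet's chain theorem and monotonicity to find a prime parent whose class lies in $\cF_n$, lift the local complementations at old vertices to the one-vertex extension, and then apply Geelen's prime-extension lemma, the exact rank-width test, and the exactness of the local-equivalence key. The only differences are cosmetic: you take $\widehat G$ directly from the definition of a vertex-minor instead of through the elementary reductions, and you spell out the soundness of the early-rejection filters, which the paper leaves implicit.
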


\begin{proof}
The assertion holds at $n=5$: the unique relevant class is represented by $C_5$.
Assume it holds for graphs on $n$ vertices, and let $G$ be a prime $(n+1)$-vertex graph
of rank-width at most two.  By \zcref{thm:bouchet-chain}, $G$ has a prime
vertex-minor $H$ on $n$ vertices.  By monotonicity, $\rw(H)\leq2$, so the class of $H$
occurs in $\cF_n$.

By \zcref{prop:three-reductions}, there is a graph $\widehat G$ locally equivalent to
$G$ and a vertex $v$ such that $\widehat G\setminus v$ is locally equivalent to $H$,
up to relabeling.  The local complementations that take $\widehat G\setminus v$ to the
chosen representative act only at vertices other than $v$, so they lift to
$\widehat G$.  Hence a one-vertex extension of the chosen representative lies in the
local-equivalence class of $G$.  By
\zcref{lem:prime-extension}, its new vertex has degree at least two and no twin, so that
extension is enumerated.  The exact rank-width test retains it, and
\zcref{lem:key-exact} places it in exactly one output class.  Conversely, every graph
retained by the algorithm is explicitly verified to be prime and to have rank-width at
most two.  This proves the induction.
\end{proof}

\begin{proposition}\label{prop:obstruction-completeness}
For each $6\leq n\leq16$, the computed set $\cO_n$ contains exactly one representative
of each excluded-vertex-minor class on $n$ vertices.
\end{proposition}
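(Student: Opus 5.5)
The plan is to follow the same shape as the proof of \zcref{prop:frontier-induction}, but now with the obstruction set $\cO_{n}$ as the target and the frontier $\cF_{n-1}$ as the source. Fix $6\leq n\leq16$ and let $G$ be an excluded vertex-minor on $n$ vertices. Two facts from the preliminaries are used first. The graph $G$ is prime and has rank-width exactly three. Since $n\geq6$, \zcref{thm:bouchet-chain} gives a prime vertex-minor $H$ of $G$ on $n-1$ vertices. Minimality gives $\rw(H)\leq2$. Because $n-1\geq5$ and $n-1\leq15$, \zcref{prop:frontier-induction} applies and the class of $H$ is represented in the computed set $\cF_{n-1}$.

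Next I lift the extension exactly as before. By \zcref{prop:three-reductions} there are a graph $\widehat G$ locally equivalent to $G$ and a vertex $v$ such that $\widehat G\setminus v$ is locally equivalent to $H$, up to relabeling. The local complementations carrying $\widehat G\setminus v$ to the chosen representative in $\cF_{n-1}$ act at vertices other than $v$, so they lift to $\widehat G$. This produces a one-vertex extension $G'$ of the representative that is locally equivalent to $G$ up to isomorphism.

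I then check that $G'$ is generated and placed in $B$.
\begin{itemize}
\item Primeness is a local-equivalence invariant, so $G'$ is prime.
\item By \zcref{lem:prime-extension}, the new vertex of $G'$ has degree at least two and no twin, so \zcref{alg:layer} enumerates $G'$.
\item By \zcref{prop:monotone}, $\rw(G')=3>2$, so $G'$ is not put into $R$.
\item Being an excluded vertex-minor is invariant under local equivalence and isomorphism, so every elementary reduction of $G'$ at every vertex is a proper vertex-minor and has rank-width at most two.
\end{itemize}
By \zcref{prop:rw2-algorithm}, these decisions are computed correctly. Hence $G'$ passes the test and enters $B$.

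The pre-filter of \zcref{sec:further-optimizations} discards $G'$ only if some vertex deletion has rank-width greater than two. That cannot happen for a minimal graph, so the optimizations lose nothing. For $n=16$ the reordered tests are the same tests run in a different order, so the same reasoning applies.

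For the converse, I need every graph in $B$ to be an excluded vertex-minor. Such a graph has rank-width greater than two, and all three elementary reductions at every vertex have rank-width at most two. By \zcref{prop:three-reductions} and \zcref{prop:monotone}, every proper vertex-minor is a vertex-minor of one of these reductions up to local equivalence, hence has rank-width at most two. Finally, \zcref{lem:key-exact} shows that deduplicating $B$ by the key leaves exactly one representative per local-equivalence and isomorphism class.

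The main obstacle I expect is the range of $n$, specifically the endpoint $n=16$. There one needs $\cF_{15}$ to be complete, which \zcref{prop:frontier-induction} supplies. One also has to make sure that the special treatment of the $16$-vertex layer still runs the full minimality test on every twin-free extension. I would verify this directly against the description of that layer.
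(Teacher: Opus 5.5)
Your proposal is correct and follows essentially the same route as the paper. The common steps are: $G$ is prime of rank-width three; Bouchet's chain theorem gives a prime parent $H$ with $\rw(H)\leq2$; the class of $H$ lies in $\cF_{n-1}$ by \zcref{prop:frontier-induction}; the extension lifts to an enumerated one-vertex extension that the direct tests accept; and the converse follows from \zcref{prop:three-reductions,prop:monotone}, with the exact key giving uniqueness. Your explicit checks on the deletion pre-filter, the reordered 16-vertex tests, and the correctness of the rank-width routine are details the paper leaves implicit, but they do not change the argument.
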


\begin{proof}
Let $G$ be an excluded vertex-minor with $n$ vertices.  Since $\rw(G)=3$ and both
$G\setminus v$ and $G/v$ have rank-width at most two for every vertex $v$,
\zcref{prop:primesubgraph}
implies that $G$ is prime.  The prime
chain theorem \zcref{thm:bouchet-chain} gives a prime vertex-minor
$H$ on $n-1$ vertices, and minimality gives $\rw(H)\leq2$.  By
\zcref{prop:frontier-induction}, the parent class of $H$ is in
$\cF_{n-1}$; the corresponding one-vertex extension of $H$ is enumerated.  
Its full rank-width and elementary-reduction tests accept it, and the exact key
retains one class representative.  Conversely, every output graph is checked to have
rank-width greater than two and all three elementary reductions of rank-width at most
two, which is sufficient by \zcref{prop:three-reductions,prop:monotone}.
\end{proof}

\begin{proof}[Proof of \zcref{thm:main}]
Graphs on at most five vertices have rank-width at most two.  By
\zcref{prop:obstruction-completeness}, the search finds every excluded vertex-minor with
at most 16 vertices.  The computed sets have sizes $1,18,6$ for graphs on 8, 9, and 10
vertices, respectively, and are empty for graphs on 6 or 7 vertices and for graphs with
between 11 and 16 vertices.  Every proper pivot-minor of an excluded vertex-minor is a
proper vertex-minor, so it has rank-width at most two; the graph itself has
rank-width three.  Thus it satisfies the hypothesis of
\zcref{thm:excluded-pivot-bound}, and no excluded vertex-minor has more than~16 vertices.
Exact key comparison shows that the 25 outputs represent distinct local-equivalence and
isomorphism classes.  The elementary-reduction test certifies their minimality.
\end{proof}

Every excluded
vertex-minor is an excluded pivot-minor.  
We remark that out of the 609
excluded pivot-minor classes, 
480 of them were found from 
the 25 excluded vertex-minor classes.
The exact splitter decomposes the 25
local-equivalence classes into $2+414+64=480$ pivot-equivalence classes on 8, 9, and
10 vertices, respectively.

\subsection{Completeness of the pivot-minor search}

For $5\leq n\leq12$, let $\cPiv_n$ contain the computed representatives of the
pivot-equivalence and isomorphism classes of prime $n$-vertex graphs of rank-width at
most two.  The pivot run constructs these complete frontiers inductively.  No complete
pivot frontier is constructed for graphs on more than 12 vertices: for candidate graphs
on 13, 14, 15, or 16 vertices, \zcref{thm:oum-chain} restricts the possible parents to
prime $3^{+3}$-rank-connected graphs.  The search obtains those parents by filtering
$\cPiv_{12}$ for the 13-vertex search and by filtering and splitting the complete
local-equivalence frontiers $\cF_{13},\ldots,\cF_{15}$ for the later searches; see the
proof of \zcref{prop:pivot-completeness}.
The pivot run also
reconstructs the 12-vertex frontier by a second route: because pivot equivalence refines
local equivalence on a fixed vertex set, each complete local-equivalence class from
\zcref{prop:frontier-induction} can be split into pivot-equivalence classes.  For each
stored representative, we enumerate its complete labeled local-complementation
orbit, quotient by ordinary graph isomorphism, and deduplicate with the
exact pivot-equivalence key.

\begin{proposition}\label{prop:pivot-frontier}
For every $5\leq n\leq12$, the computed set $\cPiv_n$ contains exactly one representative
of each pivot-equivalence and isomorphism class of prime $n$-vertex graphs of rank-width
at most two.
\end{proposition}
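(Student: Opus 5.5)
The proof will be an induction on $n$, parallel to \zcref{prop:frontier-induction}, with Allys's chain theorem (\zcref{thm:allys-chain}) replacing Bouchet's. For the base case $n=5$, every prime 5-vertex graph is locally equivalent to $C_5$, and $\rw(C_5)=2$. So $\cPiv_5$ must list the pivot-equivalence classes inside the local-equivalence class of $C_5$ up to isomorphism. I will obtain these by splitting that single class: enumerate the labeled local-complementation orbit of $C_5$, quotient by isomorphism, and deduplicate with the exact pivot key of \zcref{prop:pivot-code}. Correctness of this split comes from two facts: pivot equivalence refines local equivalence on a fixed vertex set, and the restricted key is exact.

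For the inductive step, assume the claim for $n$ and let $G$ be a prime $(n+1)$-vertex graph with $\rw(G)\le2$, where $n+1\ge6$. I split into two cases.

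\emph{Case 1: $G$ is not pivot-equivalent to a cycle.} By \zcref{thm:allys-chain}, $G$ has a prime pivot-minor $H$ on $n$ vertices, and $\rw(H)\le2$ by monotonicity. By \zcref{prop:two-pivot-reductions}, some $\widehat G$ pivot-equivalent to $G$ and some vertex $v$ satisfy that $\widehat G\setminus v$ is pivot-equivalent to $H$ up to relabeling. By induction, the class of $H$ is represented in $\cPiv_n$ by some $H'$. Each pivot taking $\widehat G\setminus v$ to $H'$ is on an edge avoiding $v$, so it lifts to $\widehat G$. Hence some one-vertex extension of $H'$ lies in the pivot class of $G$. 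By \zcref{lem:prime-extension}, the new vertex has degree at least two and no twin, so this extension is enumerated. The rank-width test (\zcref{prop:rw2-algorithm}) retains it, and the exact pivot key places it in exactly one class.

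\emph{Case 2: $G$ is pivot-equivalent to a cycle.} Then $G$ is prime and pivot-equivalent to $C_{n+1}$. I must check that $\rw(C_m)\le2$ and that $C_m$ is prime for $m\ge5$; both are easy, using a caterpillar decomposition along the cycle. This is the class the algorithm inserts explicitly. Note that the exception in Allys's theorem is a whole pivot class, not only the graph $C_{n+1}$ itself, so inserting the representative $C_{n+1}$ covers the entire exceptional class.

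Conversely, every retained graph is explicitly verified to be prime and of rank-width at most two, and exactness of the key (the pivot analogue of \zcref{lem:key-exact}) gives uniqueness of representatives. The main obstacle is the lifting step in Case 1. Pivots on $\widehat G\setminus v$ are pivots on edges of $\widehat G$ not incident to $v$. This lifting is literally valid because $\widehat G\wedge uw\setminus v=(\widehat G\setminus v)\wedge uw$ when $v\notin\{u,w\}$, which follows from the definition of pivoting via local complementations at $u,w$ only. The second point to handle carefully is the independent 12-vertex cross-check; it is not needed for the proof, though it is consistent with it.
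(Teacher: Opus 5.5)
Your proof is correct and follows the paper's argument essentially step for step. The base case splits Bouchet's unique local-equivalence class of $C_5$ with the exact pivot key. The inductive step applies Allys's theorem, lifts the pivots and relabeling from $\widehat G\setminus v$ to $\widehat G$ so that an enumerated prime extension reaches the class of $G$, and covers the exceptional class by explicitly inserting the prime, rank-width-two cycle $C_{n+1}$. Exactness of the key and direct verification of every retained graph give the converse. The only difference is harmless: you obtain $\widehat G$ via \zcref{prop:two-pivot-reductions}, whereas the definition of a pivot-minor already supplies $\widehat G$ pivot-equivalent to $G$ with $\widehat G\setminus v=H$.
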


\begin{proof}
For graphs on five vertices, splitting Bouchet's unique local-equivalence class in this
way gives the single pivot-equivalence class represented by $C_5$.  Assume the assertion
for $\cPiv_n$, and let $G$ be a
prime $(n+1)$-vertex graph of rank-width at most two.  By \zcref{thm:allys-chain}, either
$G$ has a prime pivot-minor $H$ on $n$ vertices or $G$ is pivot-equivalent to the cycle
$C_{n+1}$.  In the first case, monotonicity gives $\rw(H)\leq2$, so the class of $H$
occurs in $\cPiv_n$.  Choose a graph $\widehat G$ pivot-equivalent to $G$ such that
$\widehat G\setminus v$ is isomorphic to $H$.  Every pivot within
$\widehat G\setminus v$ also applies in $\widehat G$, so the pivots and relabeling that
take $H$ to its stored representative lift, and a one-vertex extension reaches the class
of $G$.  In the second case, the algorithm explicitly inserts $C_{n+1}$, which
is prime and has rank-width at most two.  The exact pivot-equivalence key retains one representative
of each class, and every retained graph is directly checked to be prime and of rank-width
at most two.  This proves the induction.
\end{proof}

As a cross-check by the second construction, splitting all $7{,}712$
local-equivalence classes of graphs on 12 vertices directly gives the same
$1{,}053{,}415$ pivot-equivalence classes as the incremental construction,
with identical exact-key union.
For the 13-vertex search, the computation reads the $263{,}343$ classes obtained by
retaining from $\cPiv_{12}$ only the $3^{+3}$-rank-connected graphs.  For searches on
14-, 15-, and 16-vertex graphs, the parent sets are instead obtained by
selecting the relevant classes from the complete local-equivalence frontier and splitting
them into pivot-equivalence classes.

\begin{proposition}\label{prop:pivot-completeness}
For each $6\leq n\leq16$, the pivot-minor search contains exactly one representative of
each excluded pivot-minor class on $n$ vertices.
\end{proposition}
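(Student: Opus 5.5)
We split the argument at $n=12$/$13$, since the search is organized differently on the two sides: complete pivot frontiers up to 12 vertices, and connectivity-filtered parents from 13 vertices on. In both ranges we prove two directions. Soundness: every output graph is an excluded pivot-minor. Completeness: every excluded pivot-minor class on $n$ vertices is reached, and the exact pivot key of \zcref{prop:pivot-code} keeps exactly one representative of it.

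Soundness is uniform in $n$. Each output graph $G$ is explicitly checked to satisfy $\rw(G)>2$ and $\rw(G\setminus v),\rw(G/v)\leq2$ for every $v$. By \zcref{prop:two-pivot-reductions}, every proper pivot-minor of $G$ is a pivot-minor of $G\setminus v$ or of $G/v$ for some $v$, up to pivot equivalence. \zcref{prop:monotone} then gives rank-width at most two for all proper pivot-minors.

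For completeness, let $G$ be an excluded pivot-minor on $n$ vertices. As noted in the preliminaries, $G$ is prime and $\rw(G)=3$.

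\emph{Case $6\leq n\leq 12$.} Apply \zcref{thm:allys-chain}; $G$ is not pivot-equivalent to a cycle, because cycles have rank-width at most two. So $G$ has a prime pivot-minor $H$ on $n-1$ vertices, and $\rw(H)\leq2$ by minimality. By \zcref{prop:pivot-frontier}, the class of $H$ lies in $\cPiv_{n-1}$. The lifting argument from the proof of \zcref{prop:pivot-frontier} produces a one-vertex extension of the stored representative that is pivot-equivalent to $G$. This extension is prime, so by \zcref{lem:prime-extension} it is enumerated, and the minimality test accepts it.

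\emph{Case $13\leq n\leq 16$.} By minimality, $\rw(G\setminus v)$ and $\rw(G/v)$ are both less than $\rw(G)$ for every $v$. \zcref{prop:excluded-connectivity} therefore makes $G$ prime and $3^{+2}$-rank-connected. Since $n\geq10$, \zcref{thm:oum-chain} gives a prime $3^{+3}$-rank-connected pivot-minor $H$ on $n-1$ vertices, again with $\rw(H)\leq2$. We must check that the class of $H$ is among the stored parents:
\begin{itemize}
\item For $n=13$, the parents are obtained by filtering the complete $\cPiv_{12}$ for $3^{+3}$-rank-connectivity. This filter depends only on the cut-rank function, which pivoting preserves, so no class is lost.
\item For $n\geq14$, $H$ is prime of rank-width at most two. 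Its local-equivalence class therefore lies in $\cF_{n-1}$ by \zcref{prop:frontier-induction}, and the selection criterion ($3^{+3}$-connectivity, again invariant under local complementation) retains it. Exact splitting of that local class into pivot classes then contains the pivot class of $H$.
\end{itemize}
For $n\in\{15,16\}$, \zcref{prop:excluded-not-three-plus-one} shows that $G$ is not $3^{+1}$-rank-connected. Since $G$ is $3^{+2}$-rank-connected and prime, the violating set must have size exactly four and cut-rank at most two. Hence the four-set filter, including its inherited-subset implementation, does not discard $G$. Lifting pivots as before yields an enumerated extension in the class of $G$. The deletion and single pivot-deletion test per vertex is exhaustive by \zcref{prop:two-pivot-reductions}, so that extension is accepted, and the exact key dedupes.

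The main obstacle is the $n\geq14$ case: making sure that splitting selected local-equivalence classes really yields every pivot class whose parent is needed. The splitting enumerates the full labeled local-complementation orbit and deduplicates by the exact pivot key. Since pivot equivalence refines local equivalence on a fixed vertex set, every pivot class inside the local class appears, and that is the point the proof must state carefully. A secondary point is the justification of the inherited four-set filter. A four-set with cut-rank at most two in a child either avoids the new vertex $x$, in which case its cut-rank in the parent is at most two, or contains $x$, in which case the remaining three old vertices have cut-rank at most two in the parent. Deleting $x$ does not increase cut-rank, so the recorded parent sets cover both cases.
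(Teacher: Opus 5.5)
Your proposal is correct and follows essentially the same route as the paper. It uses Allys's chain theorem with the complete frontiers $\cPiv_{n-1}$ for $n\leq12$, and \zcref{thm:oum-chain} with the filtered $\cPiv_{12}$ or the split $\cF_{n-1}$ for $13\leq n\leq16$. It then forces a four-set of cut-rank at most two for $n\in\{15,16\}$, and relies on direct minimality tests plus the exact pivot key for soundness and uniqueness. Beyond the paper's proof, you explicitly justify the inherited four-set/three-set records (deleting $x$ cannot raise cut-rank) and invoke \zcref{lem:prime-extension} to show the lifted extension is enumerated; the paper leaves both implicit.
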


\begin{proof}
Let $G$ be an excluded pivot-minor with $n$ vertices.  It has rank-width three, and
\zcref{prop:excluded-connectivity} implies that it is prime and
$3^{+2}$-rank-connected.

Suppose first that $6\leq n\leq12$.  By \zcref{thm:allys-chain}, $G$ has a prime
pivot-minor $H$ with $n-1$ vertices unless
it is pivot-equivalent to a cycle.  The exception is impossible because cycles have
rank-width at most two.  Minimality gives $\rw(H)\leq2$, so
\zcref{prop:pivot-frontier} ensures that the pivot-equivalence class of $H$ occurs in the
complete parent frontier.

Now suppose that $13\leq n\leq16$.  Here $G$ is prime and
$3^{+2}$-rank-connected, and certainly has at least ten vertices, so
\zcref{thm:oum-chain} gives a prime $3^{+3}$-rank-connected pivot-minor $H$ of $G$ with
$n-1$ vertices.  Again $\rw(H)\leq2$.

If $n=13$, then \zcref{prop:pivot-frontier} puts the pivot-equivalence class of $H$ in
the complete frontier~$\cPiv_{12}$.  Rank connectivity is invariant under pivots, so the
$3^{+3}$-rank-connectivity filter retains this class.  Thus the pivot-equivalence class
of $H$ occurs in the parent set actually used for the 13-vertex search.

If $14\leq n\leq16$, then \zcref{prop:frontier-induction} puts the local-equivalence
class of $H$ in the complete frontier~$\cF_{n-1}$.  Rank connectivity is invariant under
local complementation, so the rank-connectivity filter selects this class, which is then
split exactly into pivot-equivalence and isomorphism classes as described above.  Thus
the pivot-equivalence class of $H$ occurs in the filtered parent set.

In each case, choose $\widehat G$ pivot-equivalent to $G$ such that
$\widehat G\setminus v$ is isomorphic to $H$.  Pivots and relabelings within this
induced subgraph lift to $\widehat G$, so a one-vertex extension of the chosen
representative is pivot-equivalent to $G$ and the generator reaches it.  For
$n\in\{15,16\}$, \zcref{prop:excluded-not-three-plus-one} says that $G$ is not
$3^{+1}$-rank-connected, whereas \zcref{prop:excluded-connectivity} says that it is
$3^{+2}$-rank-connected.  Consequently, a set of cut-rank at most two whose smaller
side has at least four but fewer than five vertices exists; its smaller side has exactly
four vertices.  The explicit four-set test in
\zcref{sec:further-optimizations} is therefore safe.

Every surviving candidate is tested to have rank-width greater than two and every proper
pivot-minor of rank-width at most two.  The exact pivot-equivalence key retains one
representative and cannot identify distinct classes.  Conversely, those direct tests prove that every
reported graph is an excluded pivot-minor.
\end{proof}

\begin{proof}[Proof of \zcref{thm:pivot-main}]
Graphs on at most five vertices have rank-width at most two.  By
\zcref{prop:pivot-completeness}, the search finds every excluded pivot-minor with at most
16 vertices.  Indexed by the number of vertices from 6 to 16, the verified class counts are
$0,0,2,447,146,10,4,0,0,0,0$, whose sum is 609.  No larger excluded pivot-minor exists
by \zcref{thm:excluded-pivot-bound}.  Full-key comparison proves that the catalog entries represent
distinct pivot-equivalence and isomorphism classes, and the independent verifier certifies
their rank-width and minimality.
\end{proof}

\section{Implementation and reproducibility}\label{sec:implementation}

The implementation uses nauty 2.9.3 \cite{MP2014}: dense nauty for ordinary
canonical labeling in the extension generator, and sparse nauty for local- and
pivot-equivalence classification
on the colored incidence graphs of \zcref{sec:lc-key}.

\subsection{Search results}

\begin{table}[H]
\centering
\caption{Verified prime frontier and excluded-vertex-minor counts.  Times are CPU times
summed over all chunks of the layer, covering the extension scan and the within-chunk
classification; the final cross-chunk key merge is not included.
The 16-vertex frontier
was not retained because no further layer is needed.}
\label{tab:counts}
\begin{tabular}{@{}r@{\qquad}r@{\qquad}r@{\qquad}r@{}}
\toprule
 & Prime $\rw\leq2$ & Excluded & CPU hours\\
$\abs{V}$ & local-equivalence classes & vertex-minor classes & (13--16 vertices)\\
\midrule
5  & 1         & 0  & \\
6  & 2         & 0  & \\
7  & 4         & 0  & \\
8  & 14        & 1  & \\
9  & 51        & 18 & \\
10 & 228       & 6  & \\
11 & 1,228     & 0  & \\
12 & 7,712     & 0  & \\
13 & 51,958    & 0  & 0.04\\
14 & 372,107   & 0  & 0.58\\
15 & 2,754,556 & 0  & 9.27\\
16 & not retained & 0 & 69.69\\
\bottomrule
\end{tabular}
\end{table}

The 15-vertex layer processed about $6.08\times10^9$ prime extensions and classified
about $7.81\times10^7$ raw rank-width-two candidates.  Its cross-chunk key merge reduced
$48{,}389{,}106$ within-chunk classes to $2{,}754{,}556$ global classes.  The 16-vertex
layer had the aggregate statistics in \zcref{tab:final-layer}.

\begin{table}[H]
\centering
\caption{Aggregate statistics for the 16-vertex excluded-vertex-minor-only layer.}
\label{tab:final-layer}
\begin{tabular}{@{}lr@{}}
\toprule
Quantity & Count\\
\midrule
15-vertex parents & 2,754,556\\
Prime one-vertex extensions & 90,134,581,432\\
Extensions of rank-width at most two & 639,299,046\\
Nonminimal extensions of rank-width greater than two & 89,495,282,386\\
Cached deletion-test hits & 45,011,112,024\\
Excluded vertex-minors & 0\\
\bottomrule
\end{tabular}
\end{table}

The reported computation used ten concurrent jobs on a 10-core Apple M2 Pro.

\subsection{Pivot-minor search results}

The pivot-minor computation produced the counts in \zcref{tab:pivot-counts}.

\begin{table}[H]
\centering
\caption{Verified excluded pivot-minor classes by the number of vertices.}
\label{tab:pivot-counts}
\begin{tabular}{@{}r@{\qquad}r@{\qquad}r@{\qquad}r@{}}
\toprule
$\abs{V}$ & Classes & $\abs{V}$ & Classes\\
\midrule
6  & 0   & 12 & 4\\
7  & 0   & 13 & 0\\
8  & 2   & 14 & 0\\
9  & 447 & 15 & 0\\
10 & 146 & 16 & 0\\
11 & 10  & Total & 609\\
\bottomrule
\end{tabular}
\end{table}

For the last parent layer, $28{,}911$ relevant 15-vertex local-equivalence classes split
into $15{,}138{,}031$ pivot-equivalence classes.  The large-layer statistics are shown in
\zcref{tab:pivot-large-layers}.  In each row, the first column is the number of vertices
in the generated one-vertex extensions; their parent graphs have one fewer vertex.  We
partitioned the parent list into the indicated number of disjoint \emph{chunks}, each
processed as an independent, restartable work unit.  The parent counts, extension counts,
and CPU hours are totals over all chunks; CPU time is measured as in \zcref{tab:counts}.

\begin{table}[H]
\centering
\caption{Large-layer pivot-minor computation statistics.}
\label{tab:pivot-large-layers}
\begin{tabular}{@{}rrrrr@{}}
\toprule
Generated graph $\abs{V}$ & Chunks & Parent graphs & Prime extensions & CPU hours\\
\midrule
13 & 32  & 263,343    & 1,068,909,237   & 0.30\\
14 & 128 & 1,169,230  & 9,531,562,960   & 3.42\\
15 & 320 & 4,694,158  & 76,707,235,878  & 16.63\\
16 & 512 & 15,138,031 & 495,346,650,382 & 107.35\\
\bottomrule
\end{tabular}
\end{table}

The graph lists and verification software are available in the
public repository\footnote{\url{https://github.com/sangiloum/rwd2}}.

\section{The excluded vertex-minors}\label{sec:figures}

The following plates show the 25 representatives used in \zcref{thm:main}.
Straight edges may cross; crossings are not vertices.  Each drawing is an unlabeled
realization of the corresponding graph6 record in \zcref{app:graph6}.

\begin{figure}[H]
\centering
\begin{subfigure}[t]{0.50\textwidth}
  \centering
\obstructiongraph{1}
\caption{$O_{8,1}$}
\end{subfigure}
\caption{The unique excluded-vertex-minor class on eight vertices, displayed as the wheel graph.}
\label{fig:o8}
\end{figure}

\begin{figure}
\centering
\obstructionpanel{2}{O_{9,1}}\hfill
\obstructionpanel{3}{O_{9,2}}\hfill
\obstructionpanel{4}{O_{9,3}}

\smallskip
\obstructionpanel{5}{O_{9,4}}\hfill
\obstructionpanel{6}{O_{9,5}}\hfill
\obstructionpanel{7}{O_{9,6}}

\smallskip
\obstructionpanel{8}{O_{9,7}}\hfill
\obstructionpanel{9}{O_{9,8}}\hfill
\obstructionpanel{10}{O_{9,9}}

\smallskip
\obstructionpanel{11}{O_{9,10}}\hfill
\obstructionpanel{12}{O_{9,11}}\hfill
\obstructionpanel{13}{O_{9,12}}

\smallskip
\obstructionpanel{14}{O_{9,13}}\hfill
\obstructionpanel{15}{O_{9,14}}\hfill
\obstructionpanel{16}{O_{9,15}}

\smallskip
\obstructionpanel{17}{O_{9,16}}\hfill
\obstructionpanel{18}{O_{9,17}}\hfill
\obstructionpanel{19}{O_{9,18}}
\caption{The 18 excluded-vertex-minor classes on nine vertices.}
\label{fig:o9}
\end{figure}

\begin{figure}
\centering
\obstructionpanel{20}{O_{10,1}}\hfill
\obstructionpanel{21}{O_{10,2}}\hfill
\obstructionpanel{22}{O_{10,3}}

\smallskip
\obstructionpanel{23}{O_{10,4}}\hfill
\obstructionpanel{24}{O_{10,5}}\hfill
\obstructionpanel{25}{O_{10,6}}
\par\vspace{1.5em}
\caption{The six excluded-vertex-minor classes on ten vertices; $O_{10,2}$ is the Petersen graph.}
\label{fig:o10}
\end{figure}

\section{Concluding remarks}

The significance of the computation lies not only in the two catalogs, but also in three
methodological contributions that made the computation possible.

First, this work completes a search that the author began in 2007 with a C program.  The
earlier approach used \textsc{geng} from nauty to generate graphs satisfying
prescribed preliminary conditions and then tested them for the excluded-minor property.
For graphs on 16 vertices, however, this still required searching such a large ambient
family that extrapolated running times ranged from months to years, and the computation
was not completed.  The present approach reverses the search: guided by the prime
one-vertex reduction theorems, it generates
one-vertex extensions only from the prime rank-width-two frontier.  Thus it searches the
boundary of the class rather than a large portion of the space of all graphs, making the
complete computation feasible on a laptop.

Second, we give exact canonical encodings of graphs up to local equivalence and up to
pivot equivalence.  Inspired by code-graph constructions from coding theory
\cite{Ostergard2002,DP2009} and expressed through the isotropic-system representation of
a graph, these encodings assign the same canonical key precisely to graphs in the same
equivalence-and-isomorphism class.  They therefore allow the search to identify
local-equivalence classes, and the finer pivot-equivalence classes, by exact key comparison
instead of repeated traversal of equivalence orbits.

Third, we give a practical exact algorithm for deciding whether a graph has rank-width at
most two.  Its block-merging rules and the titanic-partition argument yield both a
reconstructible correctness proof and the speed needed for the enormous number of
rank-width tests in the search.  A general subset dynamic program remains useful as an
independent verifier, but would be far too slow as the main membership test.

Together, these ideas suggest a reusable framework for other vertex-minor-closed classes:
an exact and practical membership oracle, a one-vertex reduction theorem for the relevant
prime objects, an exact canonical encoding of the chosen equivalence relation, and a
finite bound on excluded minors.  The resulting frontier may still grow rapidly, but the
framework separates the mathematical completeness argument from parallelization and
other search-specific optimizations.

\bigskip
\noindent\textbf{Statement of AI use.}
The author used OpenAI's GPT-5.6 Sol and Anthropic's Claude Fable 5 for programming
assistance, monitoring computations, drafting, and editorial suggestions.  The author
independently verified the mathematical arguments, source code, computations, and final
text and takes full responsibility for the contents.

\appendix
\section{Graph6 records}\label{app:graph6}

\zcref{tab:graph6} is the complete machine-readable list of the displayed vertex-minor
representatives, and \zcref{tab:pivot-graph6} gives the complete list of excluded
pivot-minors.  The records use the standard graph6 upper-triangle order from nauty.  The edge counts
in \zcref{tab:graph6} provide short transcription checks.  In
\zcref{tab:pivot-graph6}, records are grouped by the number of vertices and read from left
to right, row by row.

\begin{longtable}{@{}c@{\quad}r@{\quad}r@{\quad}l@{\qquad}c@{\quad}r@{\quad}r@{\quad}l@{}}
\caption{The 25 symmetry-enhanced excluded-vertex-minor representatives.}\label{tab:graph6}\\
\toprule
Graph & $|V|$ & $|E|$ & graph6 record &
Graph & $|V|$ & $|E|$ & graph6 record\\
\midrule
\endfirsthead
\multicolumn{8}{c}{\tablename~\thetable\ (continued)}\\
\toprule
Graph & $|V|$ & $|E|$ & graph6 record &
Graph & $|V|$ & $|E|$ & graph6 record\\
\midrule
\endhead
\prettyobstructionrows
\bottomrule
\end{longtable}

\clearpage
\begingroup
\fontsize{7}{7.6}\selectfont
\setlength{\tabcolsep}{1.8pt}
\renewcommand{\arraystretch}{0.78}
\begin{longtable}{@{}llllllllll@{}}
\caption{The graph6 records of the 609 excluded pivot-minors.}
\label{tab:pivot-graph6}\\
\toprule
\endfirsthead
\multicolumn{10}{c}{\tablename~\thetable\ (continued)}\\
\toprule
\endhead
\pivotobstructionrows
\bottomrule
\end{longtable}
\endgroup

\bibliographystyle{abbrv}

\end{document}